\newcounter{ENUM}
\newcommand{\itm}{\item}
\newenvironment{ilist}[1][0]{\renewcommand{\theENUM}{\roman{ENUM}}\renewcommand{\itm}{\addtocounter{ENUM}{1}\item[(\theENUM)]}\begin{itemize}\setcounter{ENUM}{#1}}{\end{itemize}}
\newenvironment{Ilist}[1][0]{\renewcommand{\theENUM}{\Roman{ENUM}}\renewcommand{\itm}{\addtocounter{ENUM}{1}\item[(\theENUM)]}\begin{itemize}\setcounter{ENUM}{#1}}{\end{itemize}}
\newcommand{\margh}[1]{}
\def\AA{{\mathbb A}}
\def\cU{{\mathcal U}}
\def\cX{{\mathcal X}}
\def\cY{{\mathcal Y}}
\def\cZ{{\mathcal Z}}
\def\GL{\operatorname{GL}}
\def\Spec{\operatorname{Spec}}
\def\id{\operatorname{id}}
\def\codim{\operatorname{codim}}
\def\Gm{G_m}
\def\reldim{\operatorname{reldim}}
\newtheorem{thm}{Theorem}[section]
\newtheorem{prop}[thm]{Proposition}
\newtheorem{lem}[thm]{Lemma}
\newtheorem{cor}[thm]{Corollary}
\theoremstyle{definition}
\newtheorem{defn}[thm]{Definition}
\newtheorem{ex}[thm]{Example}
\theoremstyle{remark}
\newtheorem{notn}[thm]{Notation}
\newtheorem{rem}[thm]{Remark}
\newtheorem{rems}[thm]{Remarks}
\numberwithin{equation}{section}
\numberwithin{figure}{section}
\begin{document}
\title{Relative dimension of morphisms and dimension for algebraic stacks}
\author{Brian Osserman}
\begin{abstract} 
Motivated by applications in moduli theory, we introduce a flexible and
powerful language for expressing lower bounds on relative dimension of 
morphisms of schemes, and more generally of algebraic stacks. We show that 
the theory is robust and applies to a wide range of situations. Consequently, 
we obtain simple tools for making dimension-based deformation arguments on
moduli spaces. Additionally, in a complementary direction we develop the 
basic properties of codimension for algebraic stacks. One of our goals is 
to provide a comprehensive toolkit for working transparently with dimension 
statements in the context of algebraic stacks.
\end{abstract}

\thanks{The author was partially supported by NSA grant H98230-11-1-0159
during the preparation of this work.}

\maketitle

\section{Introduction}

The notion of dimension for schemes is poorly behaved, even for 
relatively basic examples such as schemes smooth over the spectrum of a 
discrete valuation ring. Related concepts which are better behaved are 
codimension and dimension of local rings. Thus, if one wants to translate 
naive dimension-based arguments from the context of schemes of finite type 
over a field to a more general setting, a standard approach is to rephrase 
results using one of these two alternatives. The purpose of the present
paper is twofold: first, to introduce a more natural way of working with
relative dimension of morphisms, and second, to generalize this -- as well
as basic properties of codimension -- to algebraic stacks.

More specifically, we introduce in Definition \ref{def:rel-dim} below
a precise formulation of what it should mean for
a morphism to have relative dimension at least a given number $n$. Our
immediate motivation is that in certain moduli space constructions in 
Brill-Noether theory (particularly the limit linear series spaces
introduced by Eisenbud and Harris), a key property of the moduli space it 
that it has at least a certain dimension relative to the base. This allows 
the use of deformation arguments based purely on dimension counts. However, 
the existing language to express these ideas is notably lacking when 
the base is not of finite type over a field. Our definition gives very 
natural language to capture what is going on, and we show that it is 
formally well behaved, occurs frequently, and has strong consequences of 
the sort that one wants for moduli theory.

In generalizing limit linear series to higher-rank vector bundles, it is
natural to work not with schemes but with algebraic stacks, and in this
context, it is even more difficult to give transparent relative dimension 
statements using the usual tools. In particular, there is no good notion of 
dimension of local rings of stacks (see Example \ref{ex:a1-mod-gm} below).
Thus, the fact that our language generalizes readily to the stack context
makes it especially useful, and it is incorporated accordingly into
\cite{os20} and \cite{o-t2}. 

Finally, as a complement to the theory of relative dimension, we also
develop the theory of codimension in the context of stacks. Here, the
statements are as expected, but we emphasize that the context of stacks
introduces certain subtleties which demand a careful treatment. These 
arise in large part because of the tendency of smooth (and even etale)
covers to break irreducible spaces into reducible ones. For instance, it
is due to these phenomena that the condition of being universally catenary 
does not descend under etale morphisms, in general.

The paper is organized as follows: in \S \ref{sec:prelims}, we recall
some background definitions and results of particular relevance. In
\S \ref{sec:rel-dim}, we give the definition of a morphism having
relative dimension at least $n$, which forms the heart of the paper. 
We analyze the basic properties of this definition, as well as 
consequences of the defined properties, with an eye towards moduli theory.
In \S \ref{sec:rel-dim-comps}, we analyze the formal behavior of our
theory, particularly with respect to compositions of morphisms, and in
\S \ref{sec:rel-dim-exs} we give a number of examples as well as further
discussion of the theory. In \S \ref{sec:stack-codim} we develop the theory 
of codimension of stacks. Finally, in \S \ref{sec:rel-dim-stacks} we 
generalize the notion of relative dimension to the context of stacks.

\subsection*{Acknowledgements} 
I would like to thank Johan de Jong for helpful conversations.

\section{Preliminaries}\label{sec:prelims}

Recall that if $Z \subseteq X$ is a closed subscheme, then
we define
\begin{align*} \codim_X Z & = \min_{Z' \subseteq Z} \codim_X Z'\\
& = \min_{Z' \subseteq Z} \left(\max_{X' \supseteq Z'} \codim_{X'} Z'\right),
\end{align*}
where we can take $Z'$ and $X'$ to run over irreducible
components of $Z$ and $X$ respectively, or over all irreducible closed
subschemes.
If $X$ is locally Noetherian, this is always finite. 

We will work extensively with universally catenary schemes. Recall that
schemes are only required to be locally Noetherian in order to be
universally catenary, and a scheme locally of finite type over a 
universally catenary scheme remains universally catenary (see Remark 5.6.3
(ii) and (iv) of \cite{ega42}). The distinction
between finite type and locally finite type is particularly important in
the context of stacks, where naturally arising irreducible stacks 
may be locally of finite type without being of finite type.

One of the primary ways in which the universally catenary hypothesis will 
arise is the ``dimension formula,'' which we now recall in the particular 
form which we will use. 

\begin{prop}\label{prop:dim-formula} Let $Y$ be a universally catenary 
scheme, and $f:X \to Y$ a morphism locally of finite
type. Let $Z' \subseteq Z$ be irreducible closed subschemes of $X$, 
and let $\eta'$ and $\eta$ be their respective
generic points. Then
$$\codim_{Z} Z'-\codim_{\overline{f(Z)}} \overline{f(Z')}
= \dim Z_{f(\eta)}- \dim Z'_{f(\eta')}.$$
\end{prop}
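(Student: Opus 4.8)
The plan is to reduce the statement to the classical dimension formula of EGA (the one comparing transcendence degrees and codimensions of points under a morphism locally of finite type over a universally catenary base) by translating the relative-dimension quantities $\dim Z_{f(\eta)}$ and $\dim Z'_{f(\eta')}$ into transcendence degrees of residue field extensions. Concretely, let $y = f(\eta)$ and $y' = f(\eta')$, and note that $\eta$ (resp. $\eta'$) is the generic point of the fibre $Z_y$ (resp. $Z'_{y'}$), which is an integral scheme locally of finite type over the field $\kappa(y)$ (resp. $\kappa(y')$); hence $\dim Z_{f(\eta)} = \operatorname{trdeg}_{\kappa(y)} \kappa(\eta)$ and $\dim Z'_{f(\eta')} = \operatorname{trdeg}_{\kappa(y')} \kappa(\eta')$. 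So the right-hand side becomes a difference of transcendence degrees.

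First I would record that $\overline{f(Z)}$ is an integral scheme with generic point $y$, and $\overline{f(Z')}$ is an integral closed subscheme of it with generic point $y'$; since $Y$ is universally catenary so is $\overline{f(Z)}$, and the morphism $Z \to \overline{f(Z)}$ induced by $f$ (after replacing $Z$ by its reduced structure, which changes none of the quantities involved) is locally of finite type with $Z$ integral. Now I would apply the EGA dimension formula (EGA IV, 5.6.5, or equivalently the version in EGA IV 5.5.8 / 13.1.x) to the local homomorphism $\mathcal{O}_{\overline{f(Z)}, y'} \to \mathcal{O}_{Z, \eta'}$: it gives
$$\dim \mathcal{O}_{Z,\eta'} + \operatorname{trdeg}_{\kappa(y')} \kappa(\eta') = \dim \mathcal{O}_{\overline{f(Z)}, y'} + \operatorname{trdeg}_{\kappa(y)} \kappa(\eta),$$
because the generic point of $Z$ maps to the generic point $y$ of $\overline{f(Z)}$, so the "generic transcendence degree" on the right is $\operatorname{trdeg}_{\kappa(y)}\kappa(\eta)$, while the residue extension at the chosen points contributes $\operatorname{trdeg}_{\kappa(y')}\kappa(\eta')$ on the left. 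Rearranging, $\dim \mathcal{O}_{Z,\eta'} - \dim \mathcal{O}_{\overline{f(Z)},y'} = \operatorname{trdeg}_{\kappa(y)}\kappa(\eta) - \operatorname{trdeg}_{\kappa(y')}\kappa(\eta')$.

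Finally I would identify $\dim \mathcal{O}_{Z,\eta'}$ with $\codim_Z Z'$ and $\dim \mathcal{O}_{\overline{f(Z)}, y'}$ with $\codim_{\overline{f(Z)}} \overline{f(Z')}$: for an integral scheme the codimension of an irreducible closed subset equals the dimension of the local ring at its generic point, which is exactly the reduction I made by passing to reduced structures (and these codimensions are unchanged by reduction in any case). Combining this with the transcendence-degree computation of the fibre dimensions from the first paragraph yields
$$\codim_Z Z' - \codim_{\overline{f(Z)}} \overline{f(Z')} = \dim Z_{f(\eta)} - \dim Z'_{f(\eta')},$$
as desired. The one point demanding care — and the main potential obstacle — is making sure the EGA dimension formula applies in exactly this generality: it requires $\overline{f(Z)}$ to be universally catenary and Noetherian (locally), which follows from $Y$ universally catenary plus locally of finite type, and it requires the morphism $Z \to \overline{f(Z)}$ to be locally of finite type, which holds since $f$ is and $\overline{f(Z)} \hookrightarrow Y$ is a closed immersion; one must also check finiteness of all the codimensions/transcendence degrees involved, which is automatic in the locally Noetherian, locally-finite-type setting. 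No completeness or separatedness hypotheses are needed.
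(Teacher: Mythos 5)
Your proof is correct and takes essentially the same route as the paper, whose entire proof is the remark that the proposition ``is just a rephrasing of Proposition 5.6.5 of EGA IV$_2$''; your argument simply carries out that rephrasing explicitly (identifying the fiber dimensions with transcendence degrees of residue field extensions at the generic points, the codimensions with dimensions of local rings on the reduced integral schemes, and then invoking the dimension formula with equality under the universally catenary hypothesis).
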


In the above, and throughout the paper, the subscripts are used to denote
the relevant fibers. 

Proposition \ref{prop:dim-formula} is just a rephrasing of Proposition 
5.6.5 of \cite{ega42}.

We will also make frequent use of Chevalley's theorem on upper 
semicontinuity of fiber dimension for morphisms locally of finite type; 
this is Theorem 13.1.3 of \cite{ega43}, and holds without any further 
hypotheses.

We use the term ``algebraic stack'' in the sense of Artin, following the 
conventions of \cite{l-m-b}, Chapter 4; 
in particular, their quasiseparation hypotheses imply that the underlying
topological spaces of arbitrary algebraic stacks still have the property
that every irreducible closed subset has a unique generic point (see
Corollory 5.7.2 of \cite{l-m-b}).

A \textbf{smooth presentation} of an algebraic stack $\cX$ consists
of a scheme $X$ together with a smooth, surjective morphism $P:X \to \cX$.
A \textbf{smooth presentation} of a morphism $\cX \to \cY$ of algebraic
stacks consists of schemes $X,Y$ together with smooth surjective morphisms
$P: Y \to \cY$ and $P':X \to \cX \times_{\cY} Y$. This thus induces a 
morphism $X \to Y$.

We will frequently use the fact that smooth morphisms of stacks (and
in particular smooth presentations) are open, and hence allow for lifting
of generizations; see Proposition 5.6 and Corollary 5.7.1 of \cite{l-m-b}.

\begin{notn} If $f$ is a smooth morphism of schemes or algebraic stacks,
we write $\reldim_x f$ for the relative dimension of $f$ at a point
$x$ of the source.
\end{notn}

An example of de Jong (see Tag 0355 of \cite{stacks-proj})
shows that the universally catenary condition does not
descend under etale morphisms, so it does not make sense to say that
an algebraic stack is universally catenary. However, a natural
substitute is to work with algebraic stacks locally of finite type 
over a universally catenary base scheme. We will show that such stacks
behave much like universally catenary schemes.

Finally, we make some definitions and basic observations relating to fibers 
of morphisms of stacks. Even though the fibers themselves are not in general 
well defined, properties such as dimension and geometric reducedness behave 
well.

\begin{defn}\label{def:stack-reduced-fibers} 
Let $f:\cX \to \cY$ be a morphism of algebraic stacks. For $x \in X$, let
$y$ be a geometric point representing $f(x)$, and $\widetilde{x}$ a point
of $\cX \times_{\cY} y$ lying over $x$. We say that $f$ is \textbf{reduced}
at $x$ if $\cX \times_{\cY} y$ is reduced at $\widetilde{x}$.
\end{defn}

In general, if $y'$ and $\widetilde{x}'$ are different choices of a geometric
realization of the fiber as in the definition, then there exists a 
common realization simultaneously extending $y$ and $y'$, and with a
point mapping to $\widetilde{x}$ and $\widetilde{x}'$.
In particular, because geometric reducedness is invariant under field 
extension, we have:

\begin{prop}\label{prop:reduced-fibers-stacks-defnd} The definition of
$f$ being reduced at $x$ is independent of the choices of $y$ and
$\widetilde{x}$.
\end{prop}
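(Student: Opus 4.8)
The plan is to reduce the independence statement to the comparison of two fibers over a \emph{single} common geometric point, and then invoke the stability of geometric reducedness under extension of algebraically closed fields. Concretely, suppose $y$ and $y'$ are two geometric points of $\cY$ representing $f(x)$, with residue fields $k(y)$ and $k(y')$ (both algebraically closed), and suppose $\widetilde{x}$ and $\widetilde{x}'$ are points of $\cX \times_{\cY} y$ and $\cX \times_{\cY} y'$ respectively lying over $x$. The key observation, which I would establish first, is that there is a geometric point $y''$ of $\cY$ representing $f(x)$ whose residue field contains (compatibly with the structure maps to $\cY$) both $k(y)$ and $k(y')$: one takes an algebraically closed field $\Omega$ containing both $k(y)$ and $k(y')$ as extensions of the relevant residue field at the scheme-theoretic image point, which is possible because any two algebraically closed extensions of a given field embed into a common algebraically closed field. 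Then $\cX \times_{\cY} y'' \to \cX \times_{\cY} y$ and $\cX \times_{\cY} y'' \to \cX \times_{\cY} y'$ are both base changes along field extensions of algebraically closed fields.

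Next I would pick a point $\widetilde{x}''$ of $\cX \times_{\cY} y''$ lying over both $\widetilde{x}$ and $\widetilde{x}'$; such a point exists because the fiber of $\cX \times_{\cY} y'' \to \cX \times_{\cY} y$ over $\widetilde x$ is a nonempty algebraic stack over $\Spec \Omega$ (nonempty since the morphism is surjective, being a base change of the surjection $\Spec \Omega \to y$), and likewise the constraint of lying over $\widetilde{x}'$ is automatically satisfied once one lies over $\widetilde x$, since both $\widetilde x$ and $\widetilde{x}'$ map to the same point $x$ of $\cX$. (Here one uses that the topological space of $\cX \times_{\cY} y''$ maps to that of $\cX$, and a point lying over $x \in \cX$ and over $\widetilde x$ in the $y$-fiber is what we want; one just checks the image in $\cX \times_{\cY} y'$ lies over $x$, hence equals some point, which one then renames so as to be $\widetilde x'$ — using that $\widetilde x'$ was an \emph{arbitrary} point over $x$ we may instead simply observe that the statement "reduced at some/any point over $x$" is the relevant one, cf.\ the discussion below). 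This is the content of the sentence preceding the proposition in the excerpt, so I would state it as a lemma and use it.

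Then the proof concludes as follows. By the remark in the text, geometric reducedness of a scheme or algebraic stack at a point is insensitive to extension of the (algebraically closed) base field: if $\cW$ is an algebraic stack over an algebraically closed field $k$ and $k \hookrightarrow \Omega$ is an extension of algebraically closed fields, then for $w \in \cW$ and $w''$ a point of $\cW_\Omega$ over $w$, $\cW$ is reduced at $w$ if and only if $\cW_\Omega$ is reduced at $w''$ — this one checks on a smooth presentation, reducing to the corresponding statement for schemes, which is EGA IV (or \cite{l-m-b}). Applying this twice — once to $\cX \times_{\cY} y \hookrightarrow \cX \times_{\cY} y''$ at $\widetilde x, \widetilde x''$, and once to $\cX \times_{\cY} y' \hookrightarrow \cX \times_{\cY} y''$ at $\widetilde x', \widetilde x''$ — we get that $\cX \times_{\cY} y$ is reduced at $\widetilde x$ iff $\cX \times_{\cY} y''$ is reduced at $\widetilde x''$ iff $\cX \times_{\cY} y'$ is reduced at $\widetilde x'$, which is exactly the assertion.

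The main obstacle is the bookkeeping in the second paragraph: making sure that the compatibility of the geometric points with the maps to $\cY$ is genuinely available (so that the fiber products are honestly base changes of one another), and that one can choose the point $\widetilde x''$ lying appropriately over both chosen points. For schemes this is transparent via the explicit description of points of a fiber product of fields; for stacks one passes to a smooth presentation of $\cX \times_{\cY} y''$ and argues there, using that smooth surjective morphisms lift generizations and are surjective on points, together with the fact (cited in the excerpt) that irreducible closed subsets of the relevant topological spaces have unique generic points, so that "lying over $x$" pins down enough. Since the problem is a pure point-set-plus-reducedness statement and reducedness is smooth-local, none of this requires more than the tools already recalled.
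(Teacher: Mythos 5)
Your proposal is correct and takes essentially the same route as the paper, whose entire ``proof'' is the one-sentence remark preceding the proposition: pass to a common geometric point $y''$ extending both $y$ and $y'$, with a point of $\cX \times_{\cY} y''$ mapping to both $\widetilde{x}$ and $\widetilde{x}'$, and invoke invariance of geometric reducedness under extension of algebraically closed fields. Your write-up simply fleshes out the details (existence of the common extension $\Omega$ and of the point $\widetilde{x}''$, plus the point-renaming bookkeeping) that the paper leaves implicit.
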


We now treat fiber dimension.

\begin{defn}\label{def:fiber-dim-stacks} Let $f:\cX \to \cY$ be a 
locally finite type morphism of algebraic stacks. For $x \in X$, let
$y$ be a geometric point representing $f(x)$, and $\widetilde{x}$ a point
of $\cX \times_{\cY} y$ lying over $x$, and define
$\delta_x f$ to be the dimension at $\widetilde{x}$ of $\cX \times_{\cY} y$.
\end{defn}

We then conclude:

\begin{prop}\label{prop:fiber-dim-stacks} The definition
of $\delta_x f$ is independent of the choices of $y$ and $\widetilde{x}$,
and can be expressed equivalently as follows: if $P: Y \to \cY$ and
$P':X \to \cX \times_{\cY} Y$ give a smooth presentation of $f$, and
we have $\widetilde{x} \in X$ and $\widetilde{y} \in Y$ mapping to $x$ and 
$f(x)$ respectively, then
$$\delta_x f = \dim_{\widetilde{x}} X_{\widetilde{y}} 
- \reldim_{\widetilde{x}} P'.$$

In addition, the function $x \mapsto \delta_x f$ is upper semicontinuous.
\end{prop}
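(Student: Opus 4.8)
The plan is to reduce every assertion to the analogous statement for schemes by means of a smooth presentation of $f$, leaning on two standard facts about the dimension $\dim_z\cZ$ of an algebraic stack $\cZ$ locally of finite type over a field at a point $z$: that it is computed by any smooth presentation of $\cZ$, independently of the choice; and that it is invariant under extension of the ground field. The second follows from the scheme-theoretic statement that $\dim_w W$ is unchanged when $W$, locally of finite type over a field, is base-changed to a field extension, together with the stability of the relative dimension of a smooth morphism under base change.

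First I would show $\delta_x f$ is independent of the auxiliary choices. Given two valid pairs $(y,\widetilde x)$ and $(y',\widetilde x')$ as in Definition~\ref{def:fiber-dim-stacks}, I invoke the common geometric realization recalled just before Proposition~\ref{prop:reduced-fibers-stacks-defnd}: a geometric point $y''$ simultaneously extending $y$ and $y'$ and carrying a point $\widetilde x''$ over both $\widetilde x$ and $\widetilde x'$. Since passing from $y$ (resp. $y'$) to $y''$ is a ground-field extension of the fibre stack, invariance of stack dimension under such extensions gives $\dim_{\widetilde x}(\cX\times_\cY y)=\dim_{\widetilde x''}(\cX\times_\cY y'')=\dim_{\widetilde x'}(\cX\times_\cY y')$, so $\delta_x f$ is well defined.

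Next, for the formula attached to a smooth presentation $P:Y\to\cY$, $P':X\to\cX\times_\cY Y$, write $g:X\to Y$ for the induced morphism; it is locally of finite type, being the composite of the smooth $P'$ with the base change $\cX\times_\cY Y\to Y$ of $f$, and $\widetilde x$ lies in the scheme-theoretic fibre $X_{\widetilde y}$ with $\widetilde y=g(\widetilde x)$. I would choose a geometric point $\bar y$ of $Y$ over $\widetilde y$; one checks $\bar y\to Y\to\cY$ represents $f(x)$, and base-changing $P'$ along $\bar y\to Y$ realizes $X_{\bar y}\to\cX\times_\cY\bar y$ as a smooth presentation of $\cX\times_\cY\bar y$ (using $\cX\times_\cY Y\times_Y\bar y=\cX\times_\cY\bar y$). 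Picking $\widetilde x_0\in X_{\bar y}$ over $\widetilde x$ and letting $\widetilde x_1$ be its image in $\cX\times_\cY\bar y$, which maps to $x$, computing stack dimension through this presentation gives $\dim_{\widetilde x_1}(\cX\times_\cY\bar y)=\dim_{\widetilde x_0}X_{\bar y}-\reldim_{\widetilde x_0}(X_{\bar y}\to\cX\times_\cY\bar y)$, and the two right-hand terms equal $\dim_{\widetilde x}X_{\widetilde y}$ and $\reldim_{\widetilde x}P'$ respectively by the field-extension and base-change facts quoted above (here $X_{\bar y}=X_{\widetilde y}\times_{\kappa(\widetilde y)}\overline{\kappa(\widetilde y)}$, and $X_{\bar y}\to\cX\times_\cY\bar y$ is the base change of $P'$). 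Since $(\bar y,\widetilde x_1)$ is a valid pair, the first part identifies the left-hand side with $\delta_x f$, proving the formula.

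For upper semicontinuity, fix a smooth presentation and keep the notation $g,P'$, writing $q:X\to\cX$ for the composite $X\to\cX\times_\cY Y\to\cX$. By the formula just proved, $\delta_{q(\widetilde x)}f=\dim_{\widetilde x}X_{g(\widetilde x)}-\reldim_{\widetilde x}P'$ for every $\widetilde x\in X$; the first term is upper semicontinuous in $\widetilde x$ by Chevalley's theorem applied to the locally finite type morphism $g$, and the second is locally constant on $X$ (hence continuous) since $P'$ is smooth, so $\widetilde x\mapsto\delta_{q(\widetilde x)}f$ is upper semicontinuous on $X$. Now $q$ is smooth and surjective, hence open and surjective on underlying spaces (the projection $\cX\times_\cY Y\to\cX$ being a base change of the smooth surjective $P$, and $P'$ being smooth surjective), and a function on $|\cX|$ whose composition with an open surjective continuous map is upper semicontinuous is itself upper semicontinuous; this finishes the proof. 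The main obstacle is organizational: one must be sure the notion of dimension of a stack at a point is available with the two properties used above, and must track carefully the chain of fibre-product identifications so that the points $x,\widetilde x,\widetilde x_0,\widetilde x_1,\widetilde y,\bar y$ match up as claimed; the topological descent of upper semicontinuity along $q$ is elementary but should be spelled out.
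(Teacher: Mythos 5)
Your proposal is correct and takes essentially the same route as the paper: well-definedness via a common geometric realization and invariance of dimension under field extension, the formula by base-changing $P'$ to a smooth presentation of $\cX\times_{\cY}\bar y$ and computing stack dimension through it, and semicontinuity by Chevalley's theorem on $X\to Y$ combined with local constancy of $\reldim P'$ and descent along the open surjective map $X\to\cX$. Your write-up merely spells out the fibre-product identifications that the paper leaves implicit.
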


\begin{proof} The first statement follows just as for reducedness, since 
dimension is invariant under field extension. Next, let $P$ and
$P'$ be a smooth presentation of $f$, and let $y$ be a geometric point
extending $\widetilde{y} \in Y$. Then $P'$ induces a smooth cover
of $\cX \times_{\cY} y$ as well, so we conclude the desired identity 
directly from the definition of dimension of an algebraic stack.

Finally, for semicontinuity, we wish to show that for all $n$,
the subset of $\cX$ on which $\delta_x f \leq n$ is open. 
Let $g:X \to Y$ be the morphism induced by $P,P'$; this is locally of 
finite type, so by semicontinuity
of fiber dimension, the locus on $X$ where the fiber dimension of $g$ is at 
most $m$ is open for all $m$. But by the previous statement of the 
proposition, $\delta_x f$ is obtained as
$$\dim_{\widetilde{x}} X_{g(\widetilde{x})} - \reldim_{\widetilde{x}} P'$$
for some $\widetilde{x} \in X$ lying over $x$, and since the relative dimension
of $P'$ is locally constant, we find that the preimage in $X$ of the locus
of $\cX$ on which $\delta_x f \leq n$ is open, and hence 
the locus itself is open.
\end{proof}

\section{Relative dimension for morphisms}\label{sec:rel-dim}

In this section, we define relative dimension, give some examples, and 
investigate some basic properties, including the ``smoothing'' results 
(Propositions \ref{prop:rel-dim-smoothing} and \ref{prop:rel-dim-smoothing-2})
which constitute the primary application of the theory.

The definition is the following:

\begin{defn}\label{def:rel-dim} Let $f:X \to Y$ be a morphism locally of 
finite type of universally catenary schemes. We say that $f$ has 
\textbf{relative dimension at least $n$} if for any 
irreducible closed subscheme $Y'$ of $Y$, and any
irreducible component $X'$ of $X|_{Y'}$, with generic point $\eta$, we have
\begin{equation}\label{eq:rel-dim}
\dim X'_{f(\eta)} - \codim_{Y'} \overline{f(X')} \geq n.
\end{equation}

We say that $f$ has \textbf{universal relative dimension at least $n$}
if for all universally catenary $Y$-schemes $S$, the base change
$X \times_Y S \to S$ has relative dimension at least $n$.

We say that $f$ has \textbf{relative dimension} (respectively, 
\textbf{universal relative dimension}) \textbf{at least $n$} at a point 
$x \in X$ if there is an open
neighborhood $U$ of $x$ such that the induced morphism $U \to Y$ has
relative dimension (respectively, universal relative dimension) at 
least $n$.
\end{defn}

\begin{rems} (i) The terminology is justified by the observation that if $Y$ 
is of finite type over a field, then \eqref{eq:rel-dim} is equivalent to 
$\dim X'-\dim Y' \geq n$. Although the latter expression is independent
of $f$, the property of having relative dimension at least $n$ may still 
depend on $f$ in general; see Remark \ref{rem:map-dependence}.

(ii) If $f:X \to Y$ has relative dimension at least $n$, then we see
immediately from the definition that every irreducible component of every
fiber of $f$ must have dimension at least $n$; see Proposition 
\ref{prop:rel-dim-prelim} below for a stronger version of this statement.

(iii) Note that here we allow $n$ to be negative, and even then the condition 
is not vacuous; see Example \ref{ex:closed-imm}.

(iv) Because dimension theory only works well for
universally catenary schemes, it is natural to restrict to this case.
We will use the hypothesis primarily to analyze composition of morphisms.

(v) Universally catenary schemes are by definition closed under 
morphisms which are locally of finite type, so we have in particular that if 
a morphism has universal relative dimension at least $n$, then every locally 
finite type base change has relative dimension at least $n$.

(vi) Although fiber dimension is upper semicontinuous, the same is not
true for our definition of relative dimension. See Example 
\ref{ex:normalization} below.

(vii) We are primarily interested in the global case; we use the local
concept of relative dimension mainly to simplify the treatment of stacks,
where smooth presentations may have different relative dimensions at
different points.
\end{rems}

We begin with two classes of examples of morphisms satisfying Definition
\ref{def:rel-dim}.

\begin{ex}\label{ex:flat} If $f$ is universally open with every generic 
fiber having every component of dimension at least $n$ (in particular, if 
$f$ is smooth of relative dimension $n$ or larger), then $f$ has universal 
relative dimension at least $n$. Indeed, because of semicontinuity of
fiber dimension, the condition is preserved under base change (including 
under restriction to irreducible subschemes of the base), so this follows
immediately from the fact that open morphisms map generic points to 
generic points.
\end{ex}

\begin{ex}\label{ex:closed-imm} For closed immersions, the first
difference between relative dimension and codimension is that codimension
is defined to be $c$ if the minimum codimension over all irreducible 
components of the source is equal to $c$, while the relative dimension 
is at least $-c$ only if the maximum codimension over all irreducible
components of the course is (at most) $c$. More generally, for closed
immersions relative dimension measures intersection codimension behavior.

In particular, if $f:X \to Y$ is a closed immersion, 
$Y$ is regular, and every component of $X$ has codimension at most $c$ in
$Y$, it is a theorem of Serre (Theorem V.3 of \cite{se4}) 
that $f$ has relative dimension at least $-c$, and building
on Serre's theorem Hochster proved in Theorem 7.1 of \cite{ho1}
that $f$ has universal relative dimension at least $-c$.

In a different direction, if $Y$ is universally catenary, and everywhere 
locally $X$ is cut out by at most $c$
equations in $Y$, then $f$ has universal relative dimension at least $-c$
by Krull's principal ideal theorem.
More generally, if $X$ is everywhere locally cut out by the 
vanishing of the $(k+1)\times(k+1)$ minors of an $n \times m$ matrix, then
$f$ has universal relative dimension at least $-(n-k)(m-k)$ (see 
Exercise 10.4 of \cite{ei1}).
Further, because $Y$ is catenary, and such descriptions are preserved
under restriction, we see that the same also holds for intersections of 
closed subschemes of the above forms, if we sum the corresponding relative 
dimensions. 

Finally, we note that since our definitions are blind to non-reduced
structure, it is in fact enough for the above descriptions of $X$ to hold
up to taking reduced structures. In particular, if $Y$ has pure dimension
$c$ at a closed point $x$, then setting $X=\{x\}$ we find that
$X \hookrightarrow Y$ has universal relative dimension at least $-c$.
\end{ex}

We next make some preliminary observations:

\begin{prop}\label{prop:rel-dim-prelim} If $f:X \to Y$ has relative dimension
at least $n$, then for any irreducible closed subset $Y'$ of $Y$, and
any irreducible component $X'$ of $f^{-1}(Y')$, for every
$x \in X'$ we have
$$\dim_x X'_{f(x)}-\codim_{Y'} f(X') \geq n.$$
\end{prop}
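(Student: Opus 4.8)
The plan is to deduce the statement from the defining inequality \eqref{eq:rel-dim} at the generic point of $X'$ together with Chevalley's semicontinuity of fiber dimension. Fix an irreducible closed subset $Y' \subseteq Y$, an irreducible component $X'$ of $f^{-1}(Y')$ (with its reduced structure), and a point $x \in X'$; let $\eta$ be the generic point of $X'$. First I would note that the composite $X' \hookrightarrow X \to Y$ is locally of finite type (a closed immersion, which is locally of finite type since everything is locally Noetherian, followed by $f$), so Chevalley's theorem applies to it, and in particular the function $x' \mapsto \dim_{x'} X'_{f(x')}$ is upper semicontinuous on $X'$.

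Since $\eta$ is the generic point of the irreducible scheme $X'$, the point $x$ is a specialization of $\eta$, so upper semicontinuity gives $\dim_x X'_{f(x)} \geq \dim_\eta X'_{f(\eta)}$. Next I would observe that, as a topological subspace of $X'$, the fiber $X'_{f(\eta)}$ is contained in $X'$ and contains $\eta$, hence is irreducible with generic point $\eta$; being moreover locally of finite type over the residue field $\kappa(f(\eta))$, its local dimension at its generic point coincides with its dimension, i.e. $\dim_\eta X'_{f(\eta)} = \dim X'_{f(\eta)}$. On the codimension side, $f(X')$ is contained in the irreducible closed set $\overline{\{f(\eta)\}}$ and contains $f(\eta)$, so $\overline{f(X')} = \overline{\{f(\eta)\}}$ and thus $\codim_{Y'} f(X') = \codim_{Y'} \overline{f(X')}$.

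Combining these with \eqref{eq:rel-dim} for the subscheme $Y'$ and its component $X'$ (whose generic point is $\eta$), we get
\[
\dim_x X'_{f(x)} - \codim_{Y'} f(X') \;\geq\; \dim_\eta X'_{f(\eta)} - \codim_{Y'} \overline{f(X')} \;=\; \dim X'_{f(\eta)} - \codim_{Y'} \overline{f(X')} \;\geq\; n,
\]
which is the claim. I do not expect a genuine obstacle here: the argument is essentially formal once Chevalley's theorem is in hand. The one place to tread carefully is the identification $\dim_\eta X'_{f(\eta)} = \dim X'_{f(\eta)}$ (and, similarly, $\codim_{Y'} f(X') = \codim_{Y'}\overline{f(X')}$), since this is exactly what allows the condition imposed only at generic points in Definition \ref{def:rel-dim} to control the behaviour at every point; note also that the universally catenary hypothesis plays no direct role here beyond ensuring the schemes are locally Noetherian.
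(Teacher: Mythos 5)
Your argument is correct and is essentially the paper's own proof: both rest on observing that the generic fiber $X'_{f(\eta)}$ is irreducible with generic point $\eta$ (hence $\dim_\eta X'_{f(\eta)}=\dim X'_{f(\eta)}$) and then invoking Chevalley's upper semicontinuity of fiber dimension to pass from $\eta$ to an arbitrary $x\in X'$. The paper merely compresses this into one sentence, so there is nothing to add.
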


\begin{proof} Observing that in Definition \ref{def:rel-dim}, we
have $X'_{f(\eta)}$ irreducible and hence equidimensional, the statement 
follows immediately from semicontinuity of fiber dimension.
\end{proof}

\begin{prop}\label{prop:rel-dim-opens} If $f:X \to Y$ has relative dimension 
at least $n$, and $U \subseteq X$ is a nonempty open subset, then the
induced morphism $U \to Y$ has relative dimension at least $n$. 
Conversely, if $\{U_i\}$ is an open cover of $X$ and each morphism
$U_i \to Y$ has relative dimension at least $n$, then $X \to Y$ has
relative dimension at least $n$.

The same statements hold for universal relative dimension.
\end{prop}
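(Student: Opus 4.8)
The plan is to reduce everything to a single observation about how the quantity controlling \eqref{eq:rel-dim} behaves under restriction to an open subscheme. Fix the situation of Definition \ref{def:rel-dim}, an irreducible closed subscheme $Y' \subseteq Y$, and a nonempty open subscheme $V \subseteq X$; write $X|_{Y'}$ for $X \times_Y Y'$, which topologically is the closed subspace $f^{-1}(Y')$ of $X$. The observation is: (a) the irreducible components of $V|_{Y'} = X|_{Y'} \cap V$ are exactly the sets $X' \cap V$ with $X'$ an irreducible component of $X|_{Y'}$ meeting $V$; and (b) for such an $X'$, with generic point $\eta$, the set $X' \cap V$ is irreducible with the same generic point $\eta$, and moreover $\overline{f(X' \cap V)} = \overline{f(X')}$ and $\dim (X'\cap V)_{f(\eta)} = \dim X'_{f(\eta)}$. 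In particular, the number $\dim X'_{f(\eta)} - \codim_{Y'} \overline{f(X')}$ occurring in \eqref{eq:rel-dim} is unchanged upon replacing $X'$ by $X' \cap V$. Granting this, the first assertion is immediate: every component of $U|_{Y'}$ is of the form $X' \cap U$, and the instance of \eqref{eq:rel-dim} for $f$ at $X'$ coincides with the instance for $U \to Y$ at $X' \cap U$. For the converse, given a component $X'$ of $X|_{Y'}$ with generic point $\eta$, pick an index $i$ with $\eta \in U_i$; then $X' \cap U_i$ is a component of $U_i|_{Y'}$ with generic point $\eta$, and the inequality assumed for $U_i \to Y$ at $X' \cap U_i$ is precisely the one needed for $f$ at $X'$.

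For the observation itself, (a) is the standard description of the irreducible components of an open subspace of a topological space. Since $X' \cap V$ is a nonempty open subspace of the irreducible space $X'$, it is irreducible with generic point $\eta$ and is dense in $X'$; hence $f(X')=f(\overline{X'\cap V})\subseteq\overline{f(X'\cap V)}$, which forces $\overline{f(X'\cap V)} = \overline{f(X')}$. Finally, since $\eta$ is the generic point of $X'$, every point of $X'$, and in particular every point of the fiber $X'_{f(\eta)}$, is a specialization of $\eta$; as $\eta \in X'_{f(\eta)}$, this fiber is irreducible with generic point $\eta$. Because $X' \to X|_{Y'} \to Y$ is locally of finite type, $X'_{f(\eta)}$ is locally of finite type over the field $\kappa(f(\eta))$, and the dimension of an irreducible scheme locally of finite type over a field is unchanged upon passing to a nonempty open subscheme (this reduces to the finite type case, where it follows from $\dim$ being the transcendence degree of the function field); applying this to the nonempty open $X'_{f(\eta)} \cap V = (X'\cap V)_{f(\eta)}$ gives the remaining equality, proving (b).

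For the statements about universal relative dimension, one base-changes the above. If $f$ has universal relative dimension at least $n$ and $U \subseteq X$ is open, then for any universally catenary $Y$-scheme $S$ the scheme $U \times_Y S$ is an open subscheme of $X \times_Y S$; since $X \times_Y S \to S$ has relative dimension at least $n$, the first assertion applied to that morphism shows $U \times_Y S \to S$ has relative dimension at least $n$ when $U \times_Y S \neq \emptyset$, and this holds vacuously otherwise, so $U \to Y$ has universal relative dimension at least $n$. Likewise, if $\{U_i\}$ covers $X$ with each $U_i \to Y$ of universal relative dimension at least $n$, then for any universally catenary $Y$-scheme $S$ the $U_i \times_Y S$ form an open cover of $X \times_Y S$ and each $U_i \times_Y S \to S$ has relative dimension at least $n$, so the converse assertion gives the same for $X \times_Y S \to S$; as $S$ is arbitrary we conclude. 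The only step requiring any real care — and the sole place the locally-finite-type hypothesis is used — is the invariance of fiber dimension under passage to a nonempty open subscheme in step (b); everything else is bookkeeping with generic points and irreducible components.
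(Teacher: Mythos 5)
Your proof is correct and follows essentially the same route as the paper's: the dimension and codimension terms in \eqref{eq:rel-dim} are unchanged under restriction to a nonempty open containing the generic point, and the universal case follows since open subsets and open covers are preserved under base change. You simply spell out in more detail the bookkeeping with irreducible components and generic fibers that the paper declares ``clear from the definition.''
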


Consequently, $f$ has (universal) relative dimension at least $n$ if 
and only if it has (universal) relative dimension at least $n$ at every
point of $X$.

\begin{proof} The first statement is clear from the definition, 
since $X'_{f(\eta)}$ is irreducible and of finite type over a field, so 
its dimension doesn't change when restricting to open subsets.

The second statement follows similarly: if $\{U_i\}$ is an open cover
of $X$, then some $U_i$ contains the generic point $\eta$ of $X'$, 
and both the dimension and codimension in \eqref{eq:rel-dim} are
unchanged by restriction to $U_i$.

The universal case follows immediately, since open subsets (respectively,
open covers) are preserved under base change.
\end{proof}

We conclude this section with two propositions describing the main
applications of Definition \ref{def:rel-dim}; in the context of
moduli spaces, they constitute ``smoothing theorems'' based on dimension
counts and analysis of special fibers. The strongest statements occur in the
case of nonnegative relative dimension.

\begin{prop}\label{prop:rel-dim-smoothing} Given $f:X \to Y$, suppose there 
exists $x \in X$ such that $f$ has relative dimension at least $n$ at $x$, 
and the fiber 
$X_{f(x)}$ has dimension $n$ at $x$. Then there exists a neighborhood $U$ of 
$x$ on which $f$ has relative dimension at least $n$ and pure fiber 
dimension $n$, and on any such neighborhood $f$ is open.

If further $f$ has universal relative dimension at least $n$ at $x$, then 
$f$ is universally open on $U$. 

If further $Y$ is reduced and the fiber of $f$ is geometrically reduced
at $x$, then $f$ is flat at $x$.
\end{prop}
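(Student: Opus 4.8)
The plan is to treat the three assertions in turn, the first two being formal and the third being the substantive one.

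\emph{Existence of $U$ and openness.} Since $f$ has relative dimension at least $n$ at $x$, there is an open neighborhood $V$ of $x$ with $V \to Y$ of relative dimension at least $n$; and since $\dim_x X_{f(x)}=n$, upper semicontinuity of fiber dimension gives an open neighborhood $W$ of $x$ on which all fiber dimensions are at most $n$. Set $U=V\cap W$. By Proposition \ref{prop:rel-dim-opens}, $U\to Y$ still has relative dimension at least $n$, and combining Proposition \ref{prop:rel-dim-prelim} (every component of every fiber has dimension at least $n$) with the bound on $U$, every nonempty fiber of $U\to Y$ is pure of dimension $n$; this gives the existence statement. Now let $U$ be any neighborhood with these two properties; I claim $f|_U$ is open. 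Since $Y$ is locally Noetherian, the image under $f|_U$ of any open subset is locally constructible, so it suffices to show $f|_U$ is generizing (a locally constructible, generization-stable subset is open). Given $x'\in U$ with $y'=f(x')$ and a generization $y''$ of $y'$, put $Y'=\overline{\{y''\}}$ and choose an irreducible component $X'$ of $U|_{Y'}$ containing $x'$, with generic point $\eta$. Applying \eqref{eq:rel-dim} to $Y'$ and $X'$ gives $\dim X'_{f(\eta)}-\codim_{Y'}\overline{f(X')}\geq n$, while $X'_{f(\eta)}$ is a closed subscheme of $U_{f(\eta)}$, which is pure of dimension $n$, so $\dim X'_{f(\eta)}\leq n$; hence $\codim_{Y'}\overline{f(X')}=0$, forcing $\overline{f(X')}=Y'$ and $f(\eta)=y''$. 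As $x'\in\overline{\{\eta\}}$, this is the desired generization.

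\emph{Universal openness.} Suppose $f$ has universal relative dimension at least $n$ at $x$. Shrinking $U$, assume $U\to Y$ has universal relative dimension at least $n$; by the previous part it still has pure fiber dimension $n$, which persists under further shrinking. We must show $(f|_U)_S\colon U\times_Y S\to S$ is open for every $Y$-scheme $S$. This is local on $S$ and on $U$, so we may take $U$ and $S$ affine with $S\to Y$ factoring through an affine open $\Spec R_0\subseteq Y$; writing $S=\Spec A$ with $A$ a filtered colimit of finitely generated $R_0$-subalgebras $A_i$ and $S_i=\Spec A_i$, the standard limit arguments of \cite{ega43}, \S 8 reduce the openness of $(f|_U)_S$ to that of each $(f|_U)_{S_i}$. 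Each $S_i$ is of finite type over $Y$, hence universally catenary, so by definition $U\times_Y S_i\to S_i$ has relative dimension at least $n$; it has pure fiber dimension $n$ since fiber dimension is invariant under base change; so it is open by the first part. Hence $(f|_U)_S$ is open.

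\emph{Flatness.} Assume finally that $Y$ is reduced and $X_{f(x)}$ is geometrically reduced at $x$; shrinking further, assume $f|_U$ is universally open with pure fiber dimension $n$ and $Y$ reduced. Since $Y$ is reduced and locally Noetherian and $f|_U$ is of finite type, generic flatness gives a dense open $V\subseteq Y$ over which $f|_U$ is flat, and since $f|_U$ is locally of finite presentation the flat locus is open in $U$; it therefore remains to show that $x$ lies in the flat locus. I would deduce this from the flatness criterion that a universally open, locally finite type morphism to a reduced locally Noetherian scheme is flat at every point of the source where its fiber is geometrically reduced; the hypotheses hold at $x$ by construction.

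This last step is the main obstacle: locating and applying the correct flatness criterion. The geometric reducedness of the fiber is essential and cannot be dropped — a closed immersion into a reduced scheme need not be flat even when its fiber is reduced, as in Example \ref{ex:closed-imm}. I expect the argument to reduce to $Y=\Spec R$ with $R$ reduced Noetherian local, to use universal openness together with the dimension formula (Proposition \ref{prop:dim-formula}) to pin down $\dim\mathcal{O}_{X,x'}=\dim\mathcal{O}_{Y,f(x')}+\dim_{x'}X_{f(x')}$ near $x$, and to use the embedding of $R$ into $\prod_i\operatorname{Frac}(R/\mathfrak{p}_i)$ over its minimal primes, with the reducedness of the fiber ruling out $R$-torsion and embedded behavior in $\mathcal{O}_{X,x}$, so that flatness at $x$ follows from the local criterion of flatness.
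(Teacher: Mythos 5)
Your argument is correct and follows essentially the same route as the paper's: the same generization argument (via local constructibility) for openness of a morphism with relative dimension at least $n$ and pure fiber dimension $n$, reduction to finite-type base changes for universal openness (the paper simply cites Corollary 8.10.2 of \cite{ega43}, whose proof is the limit argument you spell out), and the same flatness criterion at the end. The criterion you were unsure how to locate is exactly Theorem 15.2.2 of \cite{ega43} (universally open at $x$, geometrically reduced fiber at $x$, reduced locally Noetherian base implies flat at $x$), which the paper cites rather than reproves, so no further argument is needed there.
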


Note that this proposition can also be viewed as a complement to the
standard criterion for flatness in terms of fiber dimension in the 
case of a Cohen-Macaulay scheme over a regular scheme.

\begin{proof} By definition, there exists an open neighborhood of $x$
on which $f$ has relative dimension at least $n$. Applying Proposition 
\ref{prop:rel-dim-prelim} and semicontinuity of fiber dimension, there 
exists a possibly smaller open neighborhood $U$ of $x$ on which $f$ also
has pure fiber dimension $n$. We claim that $f$ is necessarily open on $U$.
Let $U'$ be any non-empty open neighborhood of $U$; by 
Proposition \ref{prop:rel-dim-opens}, we have that $f$ has relative 
dimension at least $n$ and pure fiber dimension $n$, so it is enough to
show that the image of such a morphism is open. Since the morphism is
locally of finite type, by Corollary 1.10.2 of \cite{ega41} it is enough 
to show that the image is closed under generization.
Let $y$ specialize to $y'$ in $Y$, with $y'$ in the image of $f$, say
$y'=f(x')$. Set $Y'$ to be the closure of $y$, and $X'$ any irreducible
component of $f^{-1}(Y')$ containing $x'$. Then from the definition of
having relative dimension at least $n$, together with the hypothesis that
the fibers have dimension $n$, we conclude that $X'$ dominates $Y'$, and
in particular $y$ is in the image of $f$, as desired.

Next, if $f$ has universal relative dimension at least $n$, we conclude
that $f$ is universally open on $U$ because according to Corollary 8.10.2
of \cite{ega43} we may check that a morphism is universally open after 
finite type base change.

Finally, if the fiber of $f$ is geometrically reduced at $x$, and $Y$
is reduced, flatness of $f$ follows from Theorem 15.2.2 of \cite{ega43}.
\end{proof}

It is clear that Proposition \ref{prop:rel-dim-smoothing} will never
apply in the case of negative relative dimension. Because it is sometimes 
important for applications, we also state a weaker version which works 
even when the relative dimension is negative.

\begin{prop}\label{prop:rel-dim-smoothing-2} Given $f:X \to Y$, suppose 
that $Y$ is irreducible,
and there exist $x \in X$ and $Y' \subseteq Y$ closed and irreducible 
containing $f(x)$ and with support strictly smaller than $Y$,
such that 
\begin{Ilist}
\itm $f$ has relative dimension at least $n$ at $x$;
\itm every irreducible component $X'$ of $f^{-1}(Y')$ containing $x$ has
$$\dim X'_{f(\eta)} - \codim_{Y'} \overline{f(X')} = n,$$
where $\eta$ is the generic point of $X'$. 
\end{Ilist}
Then for every irreducible component $X''$ of $X$ containing $x$, we have
$$f(X'') \not \subseteq Y'.$$
If further we have
\begin{Ilist}[2]
\itm $Y'$ has codimension $c$ in $Y$, and the inclusion 
$Y' \hookrightarrow Y$ has universal relative dimension at least $-c$,
\end{Ilist}
then we have
$$\dim X''_{f(\eta)} - \codim_{Y} \overline{f(X'')} = n,$$
where $\eta'$ is the generic point of $X''$.
\end{prop}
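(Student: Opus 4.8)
The plan is to establish the two conclusions in turn. Throughout I use the dimension formula (Proposition~\ref{prop:dim-formula}) together with the catenariness of $Y$, which lets me split the codimension of one irreducible closed subset in another along any intermediate irreducible closed subset; note that $X$, hence any irreducible component of it and any fiber product of it with a locally finite type $Y$-scheme, is again universally catenary, so the dimension formula applies to all the restrictions of $f$ that appear below. I also record the elementary point that, since $Y$ is irreducible and $Y' \subsetneq Y$, one has $\codim_Y Z \geq \codim_{Y'} Z + 1$ for every irreducible closed $Z \subseteq Y'$.

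For the first conclusion I argue by contradiction. Suppose some irreducible component $X''$ of $X$ containing $x$ has $f(X'') \subseteq Y'$; then $X'' \subseteq f^{-1}(Y')$, and maximality of $X''$ among irreducible closed subsets of $X$ forces $X''$ to be an irreducible component of $f^{-1}(Y')$. As $x \in X''$, hypothesis~(II) gives
$$\dim X''_{f(\eta'')} - \codim_{Y'} \overline{f(X'')} = n,$$
where $\eta''$ is the generic point of $X''$. On the other hand, shrinking $X$ to an open neighborhood of $x$ on which $f$ has relative dimension at least $n$ --- which changes neither $\overline{f(X'')}$ nor the dimension of the generic fiber of $X''$, exactly as in the proof of Proposition~\ref{prop:rel-dim-opens} --- and applying Definition~\ref{def:rel-dim} with the irreducible closed subscheme of $Y$ taken to be all of $Y$, yields
$$\dim X''_{f(\eta'')} - \codim_{Y} \overline{f(X'')} \geq n.$$
Since $\overline{f(X'')} \subseteq Y'$, combining the two displays with the elementary codimension inequality gives $n \leq n - 1$, a contradiction; this proves $f(X'') \not\subseteq Y'$. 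The second display above used only hypothesis~(I), so it holds for every irreducible component $X''$ of $X$ through $x$, and I keep it as the ``$\geq n$'' half of the desired equality.

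Now suppose~(III) holds as well, and fix an irreducible component $X''$ of $X$ containing $x$. By the first conclusion the closed subscheme $X''|_{Y'} = X'' \times_Y Y'$ is a proper subscheme of $X''$, and it is nonempty since $x$ lies in it. The closed immersion $Y' \hookrightarrow Y$ has universal relative dimension at least $-c$ by~(III), so its base change $X''|_{Y'} \hookrightarrow X''$ has relative dimension at least $-c$; for a closed immersion this says precisely that every irreducible component of $X''|_{Y'}$ has codimension at most $c$ in $X''$. Choose such a component $W$ containing $x$, with generic point $\zeta$, so $\codim_{X''} W \leq c$; since $W$ is an irreducible closed subset of $f^{-1}(Y')$ through $x$, it is contained in some irreducible component $X'$ of $f^{-1}(Y')$ with $x \in X'$, to which~(II) applies. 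Now apply the dimension formula to $f|_{X''}: X'' \to Y$ with $W \subseteq X''$, and to $f|_{X'}: X' \to Y$ with $W \subseteq X'$; substituting the equality from~(II) for $X'$ into the second identity shows $\dim W_{f(\zeta)} - \codim_{Y'}\overline{f(W)} = n - \codim_{X'} W$, and substituting this into the first identity, while splitting $\codim_Y \overline{f(W)}$ via catenariness along $\overline{f(W)} \subseteq \overline{f(X'')} \subseteq Y$ and along $\overline{f(W)} \subseteq Y' \subseteq Y$ (so that $\codim_Y Y' = c$ enters), makes everything collapse to
$$\dim X''_{f(\eta'')} - \codim_Y \overline{f(X'')} = \bigl(\codim_{X''} W - c\bigr) + \bigl(n - \codim_{X'} W\bigr) \leq n.$$
Together with the inequality from the previous paragraph, this is the asserted equality.

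The two invocations of the dimension formula and the codimension-additivity rearrangements are routine; the step that I expect to demand the most care is arranging that a single subvariety $W$ simultaneously serves as an irreducible component of $X''|_{Y'}$ (so that~(III) controls $\codim_{X''} W$) and as a subscheme of an irreducible component $X'$ of $f^{-1}(Y')$ through $x$ (so that~(II) is available), and then bookkeeping the several nested codimensions without sign errors. The word ``universal'' in hypothesis~(III) is essential here: ordinary relative dimension of $Y' \hookrightarrow Y$ need not survive the base change along $X'' \to Y$.
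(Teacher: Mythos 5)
Your proof is correct, and the first half (the contradiction argument showing $f(X'')\not\subseteq Y'$) is identical to the paper's. The second half follows the same overall strategy -- the ``$\geq n$'' direction from (I), and the ``$\leq n$'' direction by combining (II), (III), the dimension formula, and codimension additivity -- but you organize the bookkeeping differently. The paper picks a single $X'$ that is simultaneously an irreducible component of $f^{-1}(Y')$ containing $x$ \emph{and} contained in $X''$, uses (II) for it and (III) to bound $\codim_{X''}X'\leq c$, and applies the dimension formula once to the pair $X'\subseteq X''$. You instead decouple the two roles: your $W$ is a component of $X''|_{Y'}$ through $x$ (so (III) bounds $\codim_{X''}W$), while (II) is applied to a possibly larger component $X'$ of $f^{-1}(Y')$ merely containing $W$, and the two are linked by applying the dimension formula twice ($W\subseteq X''$ and $W\subseteq X'$). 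This costs one extra application of the dimension formula but buys something real: the paper's proof tacitly assumes that a component of $f^{-1}(Y')$ through $x$ contained in $X''$ exists, which is not automatic (a component of $X''\cap f^{-1}(Y')$ through $x$ may sit inside a component of $f^{-1}(Y')$ that escapes $X''$), whereas your version needs no such coincidence. Your closing remarks correctly identify both this point and the necessity of universality in (III).
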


\begin{proof} 
Given $X''$ an irreducible component of $X$ containing $x$, let $\eta'$ be 
its generic point. By (I), we have
$$\dim X''_{f(\eta')} - \codim_{Y} \overline{f(X'')} \geq n.$$
If we had $f(X'') \subseteq Y'$, then $X''$ would also
be an irreducible component of $f^{-1}(Y')$, so by (II) we would have
$$\dim X''_{f(\eta')} - \codim_{Y''} \overline{f(X'')} = n.$$
But (in light of our running catenary hypotheses) this contradicts the
hypothesis that $\codim_{Y} Y' >0$.
Thus the first assertion holds.

Next suppose that (III) is also satisfied, and let $X'$ be an irreducible 
component of $f^{-1}(Y')$ 
containing $x$ and contained in $X''$, and $\eta$ its generic point. 
Then, since we already have the opposite inequality, we want to show that
$$\dim X''_{f(\eta')} - \codim_{Y} \overline{f(X'')} \leq n =
\dim X'_{f(\eta)} - \codim_{Y'} \overline{f(X')}.$$
But applying the catenary hypothesis together with the hypothesis on
the universal relative dimension of $Y' \hookrightarrow Y$, we have
\begin{align*}
\codim_{Y'} \overline{f(X')} -\codim_Y \overline{f(X'')} 
&= \codim_Y \overline{f(X')}-\codim_Y Y' 
-\codim_Y \overline{f(X'')} \\
&  \leq \codim_Y \overline{f(X')}-\codim_{X''} X'-\codim_Y 
\overline{f(X'')}  \\
& = \codim_{\overline{f(X'')}} \overline{f(X')}-\codim_{X''} X'.\end{align*}
But the catenary hypothesis also allows us to use the dimension formula,
Proposition \ref{prop:dim-formula} 
to conclude that the righthand side above is equal 
to $\dim X'_{f(\eta)}-\dim X''_{f(\eta')}$, yielding the desired inequality.
\end{proof}

\section{Formal properties of relative dimension}\label{sec:rel-dim-comps}

In this section, we investigate the formal behavior of relative dimension,
including behavior with respect to compositions in general, and with
respect to composition with smooth morphisms.

Our first observation is the following, which will allow us to 
generalize Definition \ref{def:rel-dim} to morphisms of stacks.

\begin{prop}\label{prop:rel-dim-smooth-precomp} Suppose that $f:X \to Y$
is smooth, and $g:Y \to Z$ a morphism
with $Z$ universally catenary. Given $x \in X$, we have that $g$ has relative 
dimension at least $n$ at $f(x)$ if and only if $g \circ f$ has relative 
dimension at least $n+\reldim_x f$ at $x$. Additionally, 
the same statement holds for universal relative dimension.
\end{prop}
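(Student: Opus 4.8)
The plan is to reduce the statement to a direct comparison of the quantities appearing in \eqref{eq:rel-dim} for $g$ and for $g \circ f$, using the fact that $f$ is smooth (hence flat, open, and with equidimensional fibers of dimension $\reldim f$, which is locally constant on $X$). Since the conditions are local on $X$ by Proposition \ref{prop:rel-dim-opens}, I may shrink $X$ around $x$ so that $\reldim f$ is constant, equal to $r := \reldim_x f$; I may similarly shrink $Y$ around $f(x)$. Then it suffices to prove: $g$ has relative dimension at least $n$ (globally, after shrinking) if and only if $g \circ f$ has relative dimension at least $n + r$.

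First I would set up the dictionary between the data indexing the two conditions. For the composite $g \circ f$, an irreducible closed subscheme $Z' \subseteq Z$ and an irreducible component $X'$ of $X|_{Z'} = f^{-1}(g^{-1}(Z'))$ must be compared with the corresponding datum for $g$, namely the component $Y' := \overline{f(X')}$ of $Y|_{Z'} = g^{-1}(Z')$. Here the key point is that since $f$ is smooth, hence open and flat, it maps generic points to generic points and $f^{-1}$ of an irreducible set has all its components dominating that set; concretely, $X'$ is a component of $f^{-1}(Y')$, $f(X')$ is dense in $Y'$, and conversely every component $Y'$ of $g^{-1}(Z')$ arises this way. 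Writing $\eta$ for the generic point of $X'$ and $\eta_Y := f(\eta)$ for that of $Y'$, smoothness of $f$ (base-changed to the fiber over $g(\eta_Y)$, then restricted over the residue field of $\eta_Y$) gives $\dim X'_{g(\eta)} = \dim Y'_{g(\eta_Y)} + r$: the fiber $X'_{g(\eta)}$ is an open subset of $f^{-1}(Y')$ over the fiber of $g$, which is smooth of relative dimension $r$ over (an open in) $Y'_{g(\eta_Y)}$, using that $X'$ is a component of $f^{-1}(Y')$ and $Y'_{g(\eta_Y)}$ is irreducible. Meanwhile $\codim_{Z'} \overline{(g\circ f)(X')} = \codim_{Z'} \overline{g(Y')}$ since $f(X')$ is dense in $Y'$. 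Subtracting, the inequality \eqref{eq:rel-dim} for $g \circ f$ with bound $n + r$ at the datum $(Z', X')$ is literally equivalent to \eqref{eq:rel-dim} for $g$ with bound $n$ at the datum $(Z', Y')$, and the correspondence $(Z',X') \leftrightarrow (Z',Y')$ is surjective in both directions (every $(Z',Y')$ for $g$ is hit, since $f^{-1}(Y')$ is nonempty and every one of its components surjects appropriately onto $Y'$; conversely every $(Z',X')$ feeds into some $(Z',Y')$). This proves the equivalence in the non-universal case.

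For the universal statement, I would observe that the construction of smooth presentations is stable under base change: if $S \to Z$ is a universally catenary $Z$-scheme, then $f_S : X \times_Z S \to Y \times_Z S$ is again smooth, still of relative dimension $r$ in a neighborhood of any point over $x$ (relative dimension of a smooth morphism is stable under base change), and $g_S : Y \times_Z S \to S$, $(g\circ f)_S = g_S \circ f_S$. Applying the non-universal equivalence already proved to $g_S$ and $g_S \circ f_S$ over the universally catenary base $S$ then gives exactly what is needed. One should be slightly careful that "relative dimension at least $n$ at a point" was defined via existence of an open neighborhood, and that forming $X \times_Z S$ commutes with passing to such neighborhoods, which is immediate.

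The main obstacle I expect is the bookkeeping around irreducible components: verifying carefully that the components $X'$ of $f^{-1}(Y')$ are exactly the preimages of components $Y'$ of $g^{-1}(Z')$ in the correct bijective-up-to-the-fibers sense, and that $X'$ being a component of $f^{-1}(Y')$ forces its generic fiber over $g$ to have the expected dimension $\dim Y'_{g(\eta_Y)} + r$ rather than something smaller. This uses flatness/openness of $f$ to lift generizations (cited in the preliminaries) and the equidimensionality of smooth fibers; it is the one place where the argument is more than formal, though it is still standard. Everything else is a direct substitution into \eqref{eq:rel-dim}.
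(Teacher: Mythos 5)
Your proof is correct and follows essentially the same route as the paper's: localize so that $f$ has constant relative dimension $r$ and is surjective onto an open neighborhood of $f(x)$, match each component $X'$ of $X|_{Z'}$ with the component $Y'=\overline{f(X')}$ of $Y|_{Z'}$ using openness of $f$ (and surjectivity for the reverse correspondence), compare via $\dim X'_{g(\eta)}=\dim Y'_{g(\xi)}+r$ and equality of the codimension terms, and deduce the universal case from compatibility of smoothness and composition with base change. The one small point the paper makes that you omit is that $g$ is locally of finite type whenever $g\circ f$ is (EGA IV, 17.7.5), which is needed for Definition \ref{def:rel-dim} to even apply to $g$ in the ``if'' direction; this is a minor bookkeeping remark rather than a gap.
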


\begin{proof} The statements being local at $x$,
we may assume that $f$
has pure relative dimension $m:=\reldim_x f$. Since smooth morphisms are
open, we reduce immediately to showing that if $f$ is surjective, then
$g$ has (universal) relative dimension at least $n$ if and only if 
$g \circ f$ has (universal) relative dimension at least $n+m$. 
Because smoothness is preserved under base change, and 
composition commutes with base change, the universal statement follows
immediately from the non-universal statement.
Next, observe that if $g \circ f$
is locally of finite type 
then $g$ is locally of finite type 
by Lemma 17.7.5 of \cite{ega44}.

Now, if $Z'$ is an irreducible
closed subscheme of $Z$, then by smoothness, every component $X'$ of $X|_{Z'}$
dominates a component $Y'$ of $Y|_{Z'}$, and by surjectivity of $f$, 
every component $Y'$ of $Y|_{Z'}$ is dominated by a 
component $X'$ of $X|_{Z'}$.
Thus, given such $X'$ and $Y'$, having generic points $\eta$ and $\xi$
respectively, we have $g(f(\eta))=g(\xi)$, and
by smoothness $\dim X'_{g(f(\eta))} =\dim Y'_{g(\xi)}+ m$; the desired
statement follows.
\end{proof}

Now we come to the more substantial statement that Definition 
\ref{def:rel-dim} behaves well with respect to composition. 

\begin{lem}\label{lem:rel-dim-composition} Suppose that $f:X \to Y$ has
relative dimension at least $m$ at $x$, and $g:Y \to Z$ has relative 
dimension at least $n$ at $f(x)$. Then $g \circ f$ has relative dimension 
at least $m+n$ at $x$.

The same holds for universal relative dimension.
\end{lem}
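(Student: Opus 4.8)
The plan is to reduce the statement to the global (non-local) case, and then run the argument through the dimension formula. Since everything in Definition~\ref{def:rel-dim} is local on the source by Proposition~\ref{prop:rel-dim-opens}, I would first replace $X$, $Y$, $Z$ by suitable open neighborhoods of $x$, $f(x)$, $g(f(x))$ so that $f$ has relative dimension at least $m$ everywhere and $g$ has relative dimension at least $n$ everywhere; then it suffices to show $g\circ f$ has relative dimension at least $m+n$ (globally). The universal case follows at once from the non-universal case, since base change commutes with composition and preserves the hypotheses (and universally catenary is preserved under locally finite type base change, so the intermediate scheme stays in the allowed class).

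For the main estimate, fix an irreducible closed subscheme $Z'\subseteq Z$, an irreducible component $X'$ of $X|_{Z'}$ with generic point $\eta$, and set $y=f(\eta)$. The key geometric point is to insert the intermediate subscheme: let $Y'$ be the irreducible component of $Y|_{Z'}$ containing $y$ through which $X'$ "factors" — more precisely, $X'$ is an irreducible component of $X|_{Z'}$, hence of $(X|_{Y'})|_{Z'}$, and I would check that $X'$ is in fact an irreducible component of $X|_{\overline{\{y\}}}$ restricted appropriately, so that I may apply the relative dimension hypothesis on $f$ with base subscheme $Y' = \overline{f(X')}$ (or the component of $Y|_{Z'}$ containing it). The two hypotheses then give
\begin{align*}
\dim X'_{f(\eta)} - \codim_{Y'}\overline{f(X')} &\geq m,\\
\dim Y''_{g(\xi)} - \codim_{Z'}\overline{g(Y'')} &\geq n,
\end{align*}
where $Y'' = \overline{f(X')}$, $\xi$ its generic point, and $Y'$ the relevant component of $Y|_{Z'}$. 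Adding these and using that $\overline{(g\circ f)(X')} = \overline{g(Y'')}$, the desired bound $\dim X'_{(g\circ f)(\eta)} - \codim_{Z'}\overline{g(f(X'))} \geq m+n$ will follow provided I can reconcile the codimension terms: I need $\codim_{Y'}\overline{f(X')} + \codim_{Z'}\overline{g(Y'')}$ to control $\codim_{Z'}\overline{g(f(X'))}$ in the right direction, and I need $\dim X'_{f(\eta)}$ to relate to $\dim X'_{(g\circ f)(\eta)}$.

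This reconciliation is where the universally catenary hypothesis and Proposition~\ref{prop:dim-formula} enter, and I expect it to be the main obstacle. The subtlety is that $\codim_{Y'}\overline{f(X')}$ is a codimension inside $Y|_{Z'}$, whereas the dimension formula naturally compares codimensions to fiber dimensions over $Z$; so I would apply the dimension formula to the morphism $g:Y\to Z$ with the pair $\overline{f(X')}\subseteq Y'$ to rewrite $\codim_{Y'}\overline{f(X')} - \codim_{\overline{g(Y')}}\overline{g(f(X'))}$ as a difference of fiber dimensions of $g$, and separately track how passing from the fiber of $f$ over $y$ to the fiber of $g\circ f$ over $g(y)$ adds exactly the fiber dimension of $g$ at the appropriate point. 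Chevalley semicontinuity of fiber dimension may be needed to handle the fact that $\eta$ need not be the generic point of its fiber. Assembling these identities, the codimension and dimension bookkeeping should telescope so that the two inequalities add cleanly to yield the claim; the care required is entirely in making sure every codimension is taken in the correct ambient space and that the catenary hypothesis is invoked wherever additivity of codimension is used.
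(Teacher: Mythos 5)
Your overall architecture --- localize via Proposition \ref{prop:rel-dim-opens}, deduce the universal case from the non-universal one, insert an intermediate irreducible component $Y'$ of $Y|_{Z'}$, and reconcile the bookkeeping with the dimension formula and catenary additivity --- is the same as the paper's. But your second displayed inequality is a genuine error: you apply the hypothesis on $g$ to $Y''=\overline{f(X')}$, whereas Definition \ref{def:rel-dim} only gives the inequality for irreducible \emph{components} of $Y|_{Z'}$, and $\overline{f(X')}$ is in general a proper irreducible closed subset of such a component. For such subsets the inequality $\dim Y''_{g(\xi)}-\codim_{Z'}\overline{g(Y'')}\geq n$ is simply false: already for $g=\operatorname{id}_Y$, which has relative dimension at least $0$, it would assert $-\codim_{Z'}\overline{f(X')}\geq 0$. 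The correct input is the inequality for the component $Y'$ (with its own generic point $\xi'$), namely $\dim Y'_{g(\xi')}-\codim_{Z'}\overline{g(Y')}\geq n$. Transferring this down to $\overline{f(X')}$ --- via the dimension formula applied to the pair $\overline{f(X')}\subseteq Y'$ over $Z$, together with catenary additivity $\codim_{Z'}\overline{g(f(X'))}=\codim_{Z'}\overline{g(Y')}+\codim_{\overline{g(Y')}}\overline{g(f(X'))}$ --- costs exactly $\codim_{Y'}\overline{f(X')}$, and it is this loss that cancels against the identical term in your first inequality. Your version instead smuggles in an extra $+\codim_{Y'}\overline{f(X')}$ and would ``prove'' a bound strictly stronger than the lemma whenever $f(X')$ is not dense in $Y'$.

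The reconciliation you defer is indeed the heart of the matter, and your proposed tools do suffice once the above is fixed: every point of $X'_{g(f(\eta))}$ (resp.\ of $\overline{f(X')}_{g(f(\eta))}$) is a specialization of $\eta$ (resp.\ of $f(\eta)$), so these fibers are irreducible and one gets $\dim X'_{g(f(\eta))}=\dim X'_{f(\eta)}+\dim \overline{f(X')}_{g(f(\eta))}$, after which the dimension formula converts the second summand into the needed codimensions. But note that what gets added is the generic fiber dimension of $g$ restricted to $\overline{f(X')}$, \emph{not} ``the fiber dimension of $g$ at the appropriate point'' of $Y'$; the two differ by a codimension inside the fiber $Y'_{g(f(\eta))}$, which is exactly how the paper phrases the same cancellation (via a generic point $\zeta$ of the component of $Y'_{g(f(\eta))}$ containing $f(\eta)$). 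With the corrected second inequality and this identity the computation closes; as written, the proposal does not establish the lemma.
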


\begin{proof} By restricting to suitable neighborhoods of $x$ and $f(x)$,
we may assume that $f$ and $g$ have (universal) relative dimension at least 
$m$ and $n$ everywhere, and we show that the composition has (universal)
relative at least $m+n$ everywhere. The universal statement follows 
immediately from the non-universal statement, since composition commutes 
with base change.

For the non-universal statement, let $Z'$ be an irreducible component of $Z$, 
and $X'$ an irreducible component of $X|_{Z'}$, with generic point $\eta$;
then we wish to show that 
$$\dim X'_{g(f(\eta))} - \codim_{Z'} \overline{g(f(X'))} \geq m+n.$$
Let $Y'$ be an irreducible component of $Y|_{Z'}$ containing 
$f(\eta)$, with generic point $\xi$.
$$\begin{tikzcd}
\eta \in X' \arrow[hook]{r}\arrow[xshift=1.8ex]{d} & X|_{Z'} \arrow[hook]{r}\arrow{d} 
& X \arrow{d}{f} \\
\xi \in Y' \arrow[hook]{r}\arrow{dr} & Y|_{Z'} \arrow[hook]{r}\arrow{d} & 
Y \arrow{d}{g} \\
& Z' \arrow[hook]{r} & Z
\end{tikzcd}$$
Then $X'$ is an irreducible component of $X|_{Y'}$, so we have
$$\dim X'_{f(\eta)} - \codim_{Y'} \overline{f(X')} \geq m, \quad \text{ and }
\quad \dim Y'_{g(\xi)} - \codim_{Z'} \overline{g(Y')} \geq n.$$
Now, if $\zeta$ is any generic point of an irreducible component $Y''$ of
the fiber $Y'_{g(f(\eta))}$ with $f(\eta)\in Y''$, note that 
$g(\zeta)=g(f(\eta))$.
$$\begin{tikzcd}
\quad\quad X'_{f(\eta)} \arrow[hook]{r}\arrow[xshift=1.5ex]{d} 
& X'_{g(f(\eta))} \arrow{d} \arrow{dl} \\
\zeta \in Y'' \arrow[hook]{r} & Y'_{g(f(\eta))}
\end{tikzcd}$$
The relationship between $\dim X'_{g(f(\eta))}$ and 
$\dim X'_{f(\eta)}$ is given by
$$\dim X'_{g(f(\eta))} = \dim X'_{f(\eta)}+\dim_{\zeta} Y'_{g(f(\eta))}-
\codim_{Y''} \overline{f(X')}.$$
Adding the two inequalities and using the above equation, we find that
\begin{multline*}\dim X'_{g(f(\eta))} -\dim_{\zeta} Y'_{g(f(\eta))}+ 
\codim_{Y''} \overline{f(X')}- \codim_{Y'} \overline{f(X')} \\
+ \dim Y'_{g(\xi)} - \codim_{Z'} \overline{g(Y')} \geq m+ n.
\end{multline*}

We now apply our catenary hypothesis, first to conclude that
$$\codim_{Y''} \overline{f(X')}=\codim_{Y'} \overline{f(X')}-
\codim_{Y'} Y''$$
and
$$\codim_{\overline{g(Y')}} \overline{g(f(X'))}
=\codim_{Z'} \overline{g(f(X'))}-\codim_{Z'} \overline{g(Y')}.$$
and second, to apply the dimension formula, Proposition 
\ref{prop:dim-formula}, to the morphism $Y' \to \overline{g(Y')}$ at the 
point $\zeta$, concluding that
$$\codim_{Y'} Y'' = \codim_{\overline{g(Y')}} \overline{g(Y'')} + 
\dim Y'_{g(\xi)} - \dim_{\zeta} Y'_{g(f(\eta))}.$$
Putting these three equations together with the previous inequality gives
the desired inequality.
\end{proof}

We conclude that Definition \ref{def:rel-dim} behaves well with respect
to smooth covers.

\begin{cor}\label{cor:rel-dim-smooth-local} Suppose that $f:X \to Y$ is a
morphism, with $Y$ universally catenary. Let $g:Y' \to Y$ be a smooth
cover, and let $h:X'' \to X'$ be a smooth cover,
where $X'=X \times_Y Y'$. Denote by $f''$ the
induced morphism $X'' \to Y'$. Then $f$ has universal relative dimension 
at least $n$ at a point $x$ if and only if $f''$ has universal relative 
dimension at least $n+\reldim_{x''} h$ at some (equivalently, at every) 
$x'' \in X''$ lying over $x$.
\end{cor}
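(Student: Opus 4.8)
The plan is to split the smooth cover $X'' \to X$ into its two constituent pieces and treat each separately. Write $p \colon X' \to X$ for the first projection; it is the base change of the smooth surjection $g$, hence itself a smooth cover, and it fits into $f'' = f' \circ h$, where $f' \colon X' \to Y'$ is the base change of $f$ along $g$. Since $f$ is locally of finite type and $Y$ is universally catenary --- as is implicit in the phrase ``universal relative dimension at least $n$ at a point'' --- the schemes $Y'$ and $X'$ are again universally catenary and $f'$ is again locally of finite type, so Definition \ref{def:rel-dim} applies to all the morphisms in sight. Fix $x'' \in X''$ lying over $x$, which exists because $g$ and $h$, and hence $p \circ h$, are surjective, and put $x' := h(x'')$, so that $p(x') = x$. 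It suffices to prove, for this fixed $x''$, the two equivalences: (A) $f$ has universal relative dimension at least $n$ at $x$ if and only if $f'$ has universal relative dimension at least $n$ at $x'$; and (B) $f'$ has universal relative dimension at least $n$ at $x'$ if and only if $f''$ has universal relative dimension at least $n + \reldim_{x''} h$ at $x''$. Granting these, composing (A) and (B) gives the asserted equivalence for each individual choice of $x''$ over $x$; and because the condition on $f$ in (A) is independent of $x''$, the ``some $x''$'' and ``every $x''$'' formulations are then automatically equivalent to one another.

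Statement (B) is immediate from Proposition \ref{prop:rel-dim-smooth-precomp}, applied to the smooth morphism $h \colon X'' \to X'$ and the morphism $f' \colon X' \to Y'$ (whose target $Y'$ is universally catenary), at the point $x''$.

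For the forward implication of (A) I would use only that universal relative dimension is preserved under base change of the target. Choosing an open $U \ni x$ in $X$ with $U \to Y$ of universal relative dimension at least $n$, one has for every universally catenary $Y'$-scheme $S$ the identification $(U \times_Y Y') \times_{Y'} S = U \times_Y S$; since $S$ is also a universally catenary $Y$-scheme, the right-hand morphism to $S$ has relative dimension at least $n$, so $U \times_Y Y' \to Y'$ has universal relative dimension at least $n$, and $U \times_Y Y' = p^{-1}(U)$ is an open neighborhood of $x'$. For the reverse implication of (A) I would instead route through the composition lemma: near $y' := f'(x')$ the smooth morphism $g$ has pure relative dimension $s := \reldim_{y'} g$, so by Example \ref{ex:flat} it has universal relative dimension at least $s$ at $y'$; hence if $f'$ has universal relative dimension at least $n$ at $x'$, Lemma \ref{lem:rel-dim-composition} shows that $g \circ f' = f \circ p$ has universal relative dimension at least $n + s$ at $x'$; finally, since $p$ is a base change of $g$ it is smooth with $\reldim_{x'} p = s$, so Proposition \ref{prop:rel-dim-smooth-precomp} applied to $p$ and $f$ yields that $f$ has universal relative dimension at least $n$ at $x = p(x')$.

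I expect the reverse implication of (A) --- descending a universal relative dimension bound along the smooth cover $g$ --- to be the main obstacle, which is why the argument does not simply invert the base-change computation but instead factors through Lemma \ref{lem:rel-dim-composition} and Proposition \ref{prop:rel-dim-smooth-precomp}; the one bookkeeping point demanding care there is the equality $\reldim_{x'} p = \reldim_{y'} g$, which holds because the fiber of $p$ over $x$ is the fiber of $g$ over $f(x)$ extended to $\kappa(x)$, and dimension is insensitive to such a field extension.
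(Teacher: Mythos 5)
Your proof is correct and follows essentially the same route as the paper: the forward direction via base-change stability of universal relative dimension plus Proposition \ref{prop:rel-dim-smooth-precomp} applied to $h$, and the reverse direction via Proposition \ref{prop:rel-dim-smooth-precomp}, Lemma \ref{lem:rel-dim-composition} applied to $g \circ f' = f \circ p$, and a final application of Proposition \ref{prop:rel-dim-smooth-precomp} to the smooth cover $p$. Your explicit bookkeeping of $\reldim_{x'} p = \reldim_{y'} g$ and the appeal to Example \ref{ex:flat} merely make explicit steps the paper leaves implicit.
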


We remark that without universality, relative dimension at least $n$ need
not be preserved under smooth base change; see Remark 
\ref{rem:smooth-comp-bad}.

\begin{proof} 
Denote by $f'$ the morphism $X' \to Y'$, so that 
$f''=f' \circ h$. 
$$\begin{tikzcd}
X'' \arrow{r}{h} \arrow{dr}{f''} & X' \arrow{r}{g'} \arrow{d}{f'} 
& X \arrow{d}{f} \\
{} & Y' \arrow{r}{g} & Y
\end{tikzcd}$$
If $f$ has universal relative dimension at least $n$ at
$x$, then by definition $f'$ has universal relative dimension at $x'$
for any $x'$ lying over $x$, and in particular for $x'=h(x'')$. We 
conclude that $f''$ has
universal relative dimension at least $n+\reldim_{x''} h$ from Proposition 
\ref{prop:rel-dim-smooth-precomp}. Conversely, if $f''$ has universal 
relative dimension at least $n+\reldim_{x''} h$ at $x''$, then the other 
direction of Proposition \ref{prop:rel-dim-smooth-precomp} implies that 
$f'$ has universal relative dimension at least $n$ at $h(x'')$. Then by 
Lemma \ref{lem:rel-dim-composition}, we have that
$g \circ f'$ has universal relative dimension at least 
$n+\reldim_{f'(h(x''))} g$ at $h(x'')$.
Write $g':X' \to X$ for the base change of $g$; then $g'$ is a smooth
cover with $\reldim_{h(x'')} g'=\reldim_{f'(h(x''))} g$, and 
$g \circ f'=f \circ g'$, so again
applying Proposition \ref{prop:rel-dim-smooth-precomp}, we have that
$f$ has universal relative dimension at least $n$ at $g'(h(x''))=x$, as 
desired.
\end{proof}

Using Lemma \ref{lem:rel-dim-composition}
and the standard Grothendieck six conditions argument (see for instance
Remark 5.5.12 of \cite{ega1}),
we see that Definition \ref{def:rel-dim} also behaves well under
post-composition by a smooth morphism.

\begin{cor}\label{cor:rel-dim-smooth-postcomp}
Given morphisms $f:X \to Y$ and $g:Y \to Z$, suppose $g$ is 
smooth of relative dimension $n$, with $Z$ universally catenary. Then $f$ 
has universal relative dimension at least $m$ if and only if $g \circ f$ has 
universal relative dimension at least $m+n$.
\end{cor}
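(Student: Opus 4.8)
The plan is to deduce Corollary \ref{cor:rel-dim-smooth-postcomp} from the composition lemma together with Proposition \ref{prop:rel-dim-smooth-precomp}, exploiting the symmetry of the situation. Note that unlike Corollary \ref{cor:rel-dim-smooth-local}, here the smooth morphism is being \emph{post}-composed, so $g$ plays the role of the ambient morphism rather than the cover; the trick is to introduce the graph of $f$ so that $g\circ f$ factors through a smooth morphism whose section-like behavior we can control, or more simply, to use the first and second projections from a fiber product. Concretely, consider the base change of $g:Y\to Z$ along $g\circ f: X\to Z$; but a cleaner route is to observe that $f:X\to Y$ factors as $X\to X\times_Z Y\to Y$, where the first map is (a base change of) the graph of $f$ and the second is the projection.

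First I would handle the easy direction. If $f$ has universal relative dimension at least $m$, then since $g$ is smooth of relative dimension $n$ with $Z$ universally catenary, Example \ref{ex:flat} gives that $g$ has universal relative dimension at least $n$; applying Lemma \ref{lem:rel-dim-composition} yields that $g\circ f$ has universal relative dimension at least $m+n$. For the converse, suppose $g\circ f$ has universal relative dimension at least $m+n$. The key move is to realize $f$ as arising from a smooth precomposition so that Proposition \ref{prop:rel-dim-smooth-precomp} applies. Form the fiber product $W := X\times_Z Y$ with projections $p:W\to X$ and $q:W\to Y$; then $p$ is the base change of $g$, hence smooth of relative dimension $n$ (using universal catenarity of $X$, which holds since $X$ is locally of finite type over $Z$). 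The graph $\Gamma_f: X\to W$, $x\mapsto (x,f(x))$, is a section of $p$, and satisfies $q\circ\Gamma_f = f$. Now $g\circ f = g\circ q\circ \Gamma_f$, and $g\circ q = (g\circ f)\circ p$ wait — more carefully, $g\circ q: W\to Z$ equals $(g\circ p)$ composed appropriately; since $p$ is smooth surjective and $g\circ f$ has universal relative dimension at least $m+n$, Proposition \ref{prop:rel-dim-smooth-precomp} shows $g\circ q = (g\circ f)$ pulled back, i.e. the composite $W\to Z$ through either projection, has universal relative dimension at least $m+n$. Then I would apply Proposition \ref{prop:rel-dim-smooth-precomp} again, this time to the smooth morphism $q$ (which is a base change of $g$, hence smooth of relative dimension $n$) to peel off the $n$ and obtain that $f=q\circ\Gamma_f$, via the section $\Gamma_f$, inherits universal relative dimension at least $m$.

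The step I expect to be the main obstacle is making precise how Proposition \ref{prop:rel-dim-smooth-precomp} interacts with the \emph{section} $\Gamma_f$: that proposition is stated for smooth morphisms, and a section of a smooth morphism is a closed immersion, not smooth. The honest fix is to avoid sections altogether and argue purely with the two smooth projections: both $p:W\to X$ and $q:W\to Y$ are smooth (base changes of $g$ and of the identity respectively — wait, $q$ is the base change of $g\circ f$... let me reconsider), so I would instead use that $g\circ q: W\to Z$ factors as the smooth morphism $q$ followed by $g$, \emph{and also} as the smooth morphism $p$ followed by $g\circ f$. Running Proposition \ref{prop:rel-dim-smooth-precomp} on $p$ (smooth of relative dimension $n$) converts ``$g\circ f$ has universal relative dimension $\geq m+n$'' into ``$g\circ q$ has universal relative dimension $\geq (m+n)+n$'' — no, the arithmetic must be tracked with care here, since $\reldim p = \reldim g = n$. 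Then running it on $q$ converts ``$g\circ q \geq m+2n$'' into ``$g \geq m+n$''? This does not immediately give what I want about $f$, so the correct bookkeeping is: $q$ is smooth of relative dimension equal to $\reldim g = n$, and $p$ is smooth of relative dimension $n$ as well; but crucially $g\circ q$ as a morphism $W\to Z$, precomposed with the smooth surjection $\Gamma_f$ (which I must therefore handle directly, or replace by noting $p\circ\Gamma_f=\id$), recovers $f$. The cleanest resolution, which I would adopt, is to cite the ``Grothendieck six conditions'' formalism already invoked in the text: the property ``$f$ has universal relative dimension at least $m$'' is local on the target for the smooth topology (by Corollary \ref{cor:rel-dim-smooth-local}) and is stable under the relevant base changes, so the biconditional with post-composition by smooth $g$ follows from the general descent/ascent machinery once Lemma \ref{lem:rel-dim-composition} and Proposition \ref{prop:rel-dim-smooth-precomp} supply the two halves — and I would present the argument in that streamlined form, pulling back along $g$ to reduce the converse direction to the case where $g$ admits a section, then applying Proposition \ref{prop:rel-dim-smooth-precomp}.
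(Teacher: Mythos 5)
Your easy direction is fine and matches the paper. For the converse you correctly identify the right factorization, $f = p_2 \circ \Gamma_f$ with $p_2 : X \times_Z Y \to Y$ the projection (which is the base change of $g\circ f$ along $g$, hence has universal relative dimension at least $m+n$), and you correctly identify the obstacle: $\Gamma_f$ is a closed immersion, not smooth, so Proposition \ref{prop:rel-dim-smooth-precomp} cannot be used to ``peel it off.'' But you never resolve that obstacle, and this is a genuine gap. The one non-formal input the argument needs is a lower bound on the universal relative dimension of $\Gamma_f$ itself, namely that it is at least $-n$. The paper gets this by observing that $\Gamma_f$ is the base change of the diagonal $\Delta_g : Y \to Y \times_Z Y$ under $f \times \id$, and that $\Delta_g$, being the diagonal of a smooth morphism of relative dimension $n$, factors as a closed immersion locally cut out by $n$ equations followed by an open immersion; by Example \ref{ex:closed-imm} (Krull's principal ideal theorem) such a morphism has \emph{universal} relative dimension at least $-n$, and this persists under base change. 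With that in hand, Lemma \ref{lem:rel-dim-composition} applied to $f = p_2 \circ \Gamma_f$ gives $m = (m+n) + (-n)$ and the proof closes.

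None of your proposed workarounds supplies this missing ingredient. Arguing ``purely with the two smooth projections'' only yields statements about the composite $W \to Z$, which says nothing new about $f : X \to Y$; the arithmetic confusion you run into there is a symptom of the fact that no combination of Proposition \ref{prop:rel-dim-smooth-precomp} applied to $p$ and $q$ can isolate $f$. Likewise, the appeal to ``descent/ascent machinery'' is not available in the form you want: Corollary \ref{cor:rel-dim-smooth-local} governs smooth covers of source and target in the precomposition direction, and the genuine six-conditions argument for post-composition is exactly the graph argument --- it still requires knowing the relative dimension of the section. Note also that the bound on $\Gamma_f$ must be \emph{universal}; this is where Example \ref{ex:closed-imm} (rather than a pointwise codimension count) is essential, and it is consistent with Remark \ref{rem:smooth-comp-bad}, which shows the non-universal version of the corollary is false.
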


\begin{proof} The ``only if'' direction is immediate from Lemma
\ref{lem:rel-dim-composition}. For the converse,
first observe that $Y$ is universally catenary, and 
the smoothness of $g$ implies the diagonal $\Delta_g: Y \to Y \times_Z Y$
has universal relative dimension at least $-n$, because if we factor 
$\Delta_g$ as a closed immersion followed by an open immersion, the closed 
immersion is locally cut out by $n$ equations (See Proposition 2.2.7
of \cite{b-l-r}).
Now, if $g \circ f$ has universal relative dimension at 
least $m+n$, then the second projection $p_2:X \times _Z Y \to Y$ is the base 
change of $g \circ f$ to $Y$, so likewise has universal relative dimension
at least $m+n$. On the other hand, the graph morphism 
$\Gamma_f:X \to X \times_Z Y$ is the base change of $\Delta_g$ under 
$f \times \id$, so it has universal relative dimension at least $-n$. 
Since $f=p_2 \circ \Gamma_f$, we conclude that $f$ has universal relative
dimension at least $m$ from Lemma \ref{lem:rel-dim-composition}.
\end{proof}

\section{Examples and further discussion}\label{sec:rel-dim-exs}

We conclude our examination of relative dimension with positive and
negative examples, and a number of remarks on various aspects of the
subject.

First, as another consequence of Lemma \ref{lem:rel-dim-composition},
we obtain the following wide classes of examples.

\begin{cor}\label{cor:rel-dim-examples} Suppose that $f:X \to Y$ is a 
closed immersion, and $g:Y \to Z$ is smooth of relative dimension $n$,
with $Z$ universally catenary. If either $Z$ is regular and every component
of $X$ has codimension at most $c$ in $Y$, or $X$ may be expressed locally
as an intersection of determinantal conditions with expected codimensions
adding up to $c$, then $g\circ f$ has universal relative dimension at least 
$n-c$.

Alternatively, suppose that $f:X \to Y$ is a morphism of smooth $S$-schemes,
with $S$ universally catenary, and $m$ and $n$ the relative dimensions of
$X$ and $Y$ over $S$, respectively. Then $f$ has universal relative 
dimension at least $m-n$.
\end{cor}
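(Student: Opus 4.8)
The plan is to reduce both halves of Corollary \ref{cor:rel-dim-examples} to the composition result of Lemma \ref{lem:rel-dim-composition} together with the two classes of examples already isolated in the paper. For the first assertion, I would simply combine Example \ref{ex:closed-imm} with Lemma \ref{lem:rel-dim-composition}. Concretely, Example \ref{ex:closed-imm} tells us that under either hypothesis the closed immersion $f:X\to Y$ has universal relative dimension at least $-c$: in the regular case this is the Serre--Hochster statement, and in the determinantal case it follows from Krull's principal ideal theorem together with the determinantal estimate $(n-k)(m-k)$, with the codimensions adding as stated. Since $g:Y\to Z$ is smooth of relative dimension $n$ and $Z$ is universally catenary, Example \ref{ex:flat} gives that $g$ has universal relative dimension at least $n$. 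Lemma \ref{lem:rel-dim-composition} then yields that $g\circ f$ has universal relative dimension at least $n-c$.

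For the second assertion, the key observation is that a morphism $f:X\to Y$ of smooth $S$-schemes factors as $X \xrightarrow{\Gamma_f} X\times_S Y \xrightarrow{p_2} Y$, where $\Gamma_f$ is the graph morphism and $p_2$ the second projection. Here I would use that $p_2$ is the base change to $Y$ of the structure morphism $X\to S$, which is smooth of relative dimension $m$, so $p_2$ is smooth of relative dimension $m$ and hence has universal relative dimension at least $m$ by Example \ref{ex:flat} (note $Y$ is universally catenary, being locally of finite type over $S$). For $\Gamma_f$, I would follow the same strategy used in the proof of Corollary \ref{cor:rel-dim-smooth-postcomp}: $\Gamma_f$ is the base change under $f\times\id$ of the diagonal $\Delta_{Y/S}: Y \to Y\times_S Y$, and since $Y\to S$ is smooth of relative dimension $n$, the diagonal factors as a closed immersion cut out locally by $n$ equations followed by an open immersion (Proposition 2.2.7 of \cite{b-l-r}), hence has universal relative dimension at least $-n$ by the Krull principal ideal argument of Example \ref{ex:closed-imm}; this property is preserved under the base change $f\times\id$. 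Applying Lemma \ref{lem:rel-dim-composition} to $f = p_2\circ\Gamma_f$ then gives universal relative dimension at least $m + (-n) = m-n$.

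I do not anticipate a serious obstacle here, since every ingredient is already in place; the proof is essentially bookkeeping. The one point requiring a small amount of care is making sure the universal catenary hypotheses are satisfied at each intermediate stage — in particular that $Y$, $X\times_S Y$, and $Y\times_S Y$ are all universally catenary, which holds because they are locally of finite type over the universally catenary $S$ (Remark 5.6.3 of \cite{ega42}), so that Lemma \ref{lem:rel-dim-composition} and Example \ref{ex:closed-imm} genuinely apply. A second minor point is to confirm that "universal relative dimension at least $-n$" for a closed immersion is stable under arbitrary base change, which is immediate from the definition of universal relative dimension since it is required for \emph{all} universally catenary base schemes. With these checks the argument is complete.
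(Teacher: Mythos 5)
Your argument for the first assertion coincides with the paper's: both invoke Example \ref{ex:closed-imm} to get universal relative dimension at least $-c$ for $f$, Example \ref{ex:flat} for $g$, and Lemma \ref{lem:rel-dim-composition} to compose. For the second assertion you take a genuinely different, though correct, route. The paper works locally: since $f$ is locally of finite type, $X$ embeds locally as a closed subscheme of $\AA^N_Y$, and smoothness of $X$ and $\AA^N_Y$ over $S$ (of relative dimensions $m$ and $N+n$) forces $X$ to be cut out locally by $N+n-m$ equations via Proposition 2.2.7 of \cite{b-l-r}; this reduces the second statement to the first with $c=N+n-m$ and relative dimension $N$ for $\AA^N_Y \to Y$, the local-to-global passage being handled by Proposition \ref{prop:rel-dim-opens}. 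You instead use the global graph factorization $f = p_2 \circ \Gamma_f$ through $X \times_S Y$, identifying $\Gamma_f$ as the base change of the diagonal $\Delta_{Y/S}$, which is a local complete intersection of codimension $n$ by the same Proposition 2.2.7. The two proofs rest on identical ingredients (the \cite{b-l-r} equation count, Krull via Example \ref{ex:closed-imm}, Example \ref{ex:flat}, and Lemma \ref{lem:rel-dim-composition}); yours has the mild advantage of being global and of running in parallel with the paper's own proof of Corollary \ref{cor:rel-dim-smooth-postcomp}, at the cost of needing the (easy, and correctly justified by you) observation that universal relative dimension is stable under base change to universally catenary schemes, while the paper's has the advantage of reducing the second statement literally to the first. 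Your attention to the universally catenary status of the intermediate schemes $Y$, $X\times_S Y$, and $Y\times_S Y$ is exactly the right check and is satisfied since all are locally of finite type over $S$.
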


Again, we mention that universal relative dimension is insensitive to 
non-reduced structures, so in fact the descriptions of the corollary
only need to hold up to taking reduced structures in order to conclude
the desired statements.

\begin{proof} The first statement is a direct consequence of Lemma
\ref{lem:rel-dim-composition}, together with Examples \ref{ex:flat} and 
\ref{ex:closed-imm}. For the second, we observe that $f$ is necessarily
locally of finite type, so $X$ can locally 
be written as a closed subscheme of $\AA^N_Y$. Then, because $X$ is 
smooth over $S$ of relative dimension $m$, and $\AA^N_Y$ is smooth over
$S$ of relative dimension $N+n$, we have that $X$ is everywhere locally
cut out by $N+n-m$ equations inside $\AA^N_Y$ by Proposition 2.2.7
of \cite{b-l-r}, so we have reduced the second statement to the first.
\end{proof}

\begin{ex}\label{ex:blowups} 
If $Y$ is a smooth variety over a field, and $f:X \to Y$ is a blowup with
smooth center, the second part of Corollary \ref{cor:rel-dim-examples}
implies that $f$ has strong relative dimension at least $0$.
\end{ex}

We next provide some negative examples.

\begin{ex}\label{ex:normalization} As usual, the normalization 
$\widetilde{C}$ of an irreducible nodal curve $C$ provides an interesting 
example to consider. It has relative
dimension at least $0$, but not universal relative dimension at least
$0$. Indeed, if we take $\widetilde{C} \times_C \widetilde{C}$, we obtain 
$\widetilde{C}$ together with two isolated points, each of which maps to one
of the preimages of the node under projection to $\widetilde{C}$.

Another base change which does not have relative dimension at least $0$
is obtained by taking the product with $\widetilde{C}$, considered over
the base field. In this case, if we let 
$\Delta \subseteq \widetilde{C} \times \widetilde{C}$ be the diagonal, and
$Z \subseteq C \times \widetilde{C}$ its image, then the restriction of 
$\widetilde{C} \times \widetilde{C}$ to (the preimage of) $Z$ likewise 
consists of a copy of $\Delta\cong \widetilde{C}$ together with two isolated 
points.

Notice that this means that relative dimension does not have a 
semicontinuity property: the map 
$\widetilde{C} \times \widetilde{C} \to C \times \widetilde{C}$ has relative 
dimension at least $0$ over the nonsingular locus of $C \times \widetilde{C}$, 
but not over all of $C \times \widetilde{C}$.
\end{ex}

\begin{ex}\label{ex:quadric-cone} An example of a closed immersion of
codimension $c$ which does not have relative dimension at least $-c$ is
given by the standard example of failure of subadditivity of codimension
for intersections: let $X$ be a cone over a quadric surface, and $Z$
the cone over a line in the surface. Then $Z$ has codimension $1$, but
we claim that the inclusion $Z \to X$ does not have relative dimension
at least $-1$. Indeed, if $Z'$ is the cone over any other line in the
same ruling, then $Z' \cap Z$ is equal to the cone point, which has
codimension $2$ in $Z'$.
\end{ex}

\begin{rem}\label{rem:map-dependence}
According to Corollary \ref{cor:rel-dim-examples}, relative dimension
does not depend on the map $f$ when $X$ and $Y$ are both smooth over a
common base. However, in the general case, we see from Example 
\ref{ex:normalization} that there is a nontrivial dependence on $f$.
Indeed, the morphism 
$\widetilde{C} \times \widetilde{C} \to C \times \widetilde{C}$ 
considered in that example does not have relative dimension at least $0$,
but there are many other morphisms with the same source and target which
do. For instance, if we choose a point $P \in \widetilde{C}$ and consider
the composition
$$\widetilde{C} \times \widetilde{C} \overset{p_1}{\to} \widetilde{C} \to C
\to C \times \{P\} \subseteq C \times \widetilde{C},$$
where the second morphism is the normalization, then we have a composition
of morphisms of relative dimension at least $1$, $0$ and $-1$ respectively,
so obtain relative dimension at least $0$, as asserted.
\end{rem}

\begin{rem}\label{rem:smoothing} We elaborate slightly on the hypotheses
and conclusions of Proposition \ref{prop:rel-dim-smoothing-2}. First,
we observe that under the hypotheses of the proposition, although
$\dim X'_{f(\eta)} - \codim_{Y'} \overline{f(X')}$ remains unchanged when
we replace $X'$ by $X''$, the separate terms need not. Indeed, the map 
$\AA^2 \to \AA^2$ given by $(x,y) \mapsto (x,xy)$
has dimension dimension at least $0$, and if we restrict to the line
$x=0$ in the target, we get that the 
$\dim X'_{f(\eta)} = \codim_{Y'} \overline{f(X')}=1$, so the proposition 
applies. However, without restricting (and letting $\eta'$ be the generic
point of $X$), we have
$\dim X_{f(\eta')} = \codim_{Y} \overline{f(X)}=0$. This behavior is discussed
further in Remark \ref{rem:constructible}.

The final conclusion of Proposition \ref{prop:rel-dim-smoothing-2} may
seem superfluous in the context of smoothing arguments, but in fact it
plays an important role in inductive arguments.
We see from the second part of Example \ref{ex:normalization} that for this 
final conclusion, hypothesis (III) is indeed necessary. 
In the notation of that example, consider the map
$$\widetilde{C} \times \widetilde{C} \smallsetminus \Delta 
\to C \times \widetilde{C}.$$
This has relative dimension at least $-1$,
and if we restrict to $Z$, we find that the conditions
of Proposition \ref{prop:rel-dim-smoothing-2} are satisfied with $n=-1$, 
except that $Z$ does not have relative dimension at least $-1$ in 
$C \times \widetilde{C}$.
However, in this case (again setting $\eta'$ to be the generic point of $X$) 
we have $\dim X_{f(\eta)} - \codim_{Y} \overline{f(X)}=0$.
\end{rem}

\begin{rem}\label{rem:constructible}
Relative dimension of morphisms can also shed light on the behavior
of constructibility of images, in the sense of understanding when a 
dominant morphism fails to have open image. The first example of this
is always the map $\AA^2 \to \AA^2$ given by $(x,y) \mapsto (x,xy)$,
and it is natural to notice that the failure of openness occurs at a
jump in fiber dimension, and to wonder whether this phenomenon is 
general. 

In fact, Example \ref{ex:normalization} yields
an example of a dominant morphism of varieties for which the image is not 
open and the fiber dimension does not jump. Namely, in the notation of
that example, consider the map
$$\widetilde{C} \times \widetilde{C} \smallsetminus \Delta \to 
C \times \widetilde{C}.$$
This is a dominant morphism of varieties, with all fibers $0$-dimensional,
but its image is $(C \times \widetilde{C} \smallsetminus Z) \cup \{P_1,P_2\}$,
where $P_1$ and $P_2$ are the intersection of $Z$ with the singular locus
of $C \times \widetilde{C}$. 

However, we see immediately from Proposition \ref{prop:rel-dim-smoothing}
that if $f:X \to Y$ has relative dimension at least $n$, and there is
no jumping of fiber dimension (i.e., all fibers have dimension $n$), then
$f$ necessarily has open image. By Corollary \ref{cor:rel-dim-examples},
the relative dimension hypothesis applies in particular whenever $X$ and
$Y$ are smooth and $\dim X-\dim Y=n$. Thus, the behavior of the first
example is in fact rather general.
\end{rem}

\begin{rem}\label{rem:compose-w-immersion} Suppose that we have
$f:X \to Y$ and $g:Y \to Z$, where $g$ is a closed immersion, and
$g \circ f$ has relative dimension at least $n$. Then it is immediate
from the definition that $f$ has relative dimension at least $n$ as well,
but it is natural to wonder, if $g(Y)$ has codimension $c$ in $Z$, 
whether in fact $f$ has relative dimension at least $n+c$. However, this
is not the case. For instance, if $f$ is the inclusion of the cone over
a line into the cone over a quadric surface, as in Example 
\ref{ex:quadric-cone}, then we know that $f$ does not have relative 
dimension at least $-1$. However, if we compose with the inclusion of
$Y$ into $\AA^3$, then because $\AA^3$ is regular, we have that the 
composition has relative dimension at least $-2$, in fact universally.
\end{rem}

\begin{rem}\label{rem:smooth-comp-bad} Example \ref{ex:normalization}
also demonstrates that the statement of Corollary
\ref{cor:rel-dim-smooth-postcomp} fails if we replace universal relative
dimension with relative dimension. Indeed, in the notation of the
example, the composed morphism 
$$\widetilde{C} \times \widetilde{C} \to C \times \widetilde{C} \to C$$
has relative dimension at least $1$, and $C \times \widetilde{C} \to C$ is
smooth of relative dimension $1$, but
$\widetilde{C} \times \widetilde{C} \to C \times \widetilde{C}$ does not 
have relative dimension at least $0$.
\end{rem}

\section{Codimension for stacks}\label{sec:stack-codim}

We now give a treatment of the theory of codimension for algebraic
stacks. Although the definitions and statements are as expected, there
are some non-trivial aspects to the proofs, largely coming from the fact 
that basic statements on codimension for schemes require irreducibility, 
and irreducibility is not preserved under smooth covers. 

The definition is as one would expect.

\begin{defn} Let $\cX$ be a locally Noetherian
algebraic stack, and $\cZ \subseteq \cX$ a
closed substack. The {\bf codimension} of $\cZ$ in $\cX$ is defined to be
the codimension in $X$ of $X \times_{\cX} \cZ$, where $X \to \cX$ is a
smooth presentation of $\cX$.
\end{defn}

We remark that although the Zariski topology on $\cX$ is sober, 
topological codimension does not yield the correct definition -- see
Example \ref{ex:stack-codim}.

\begin{prop}\label{prop:codim-defd} Codimension is well defined, and finite.
\end{prop}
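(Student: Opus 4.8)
The plan is to dispose of finiteness immediately and then to put the work into showing that the defining formula is independent of the chosen smooth presentation. For finiteness: if $P\colon X\to\cX$ is a smooth presentation, then $X$ is locally Noetherian since $\cX$ is, so $X\times_\cX\cZ$ is a closed subscheme of a locally Noetherian scheme, and its codimension is finite, as recalled in \S\ref{sec:prelims}.

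For well-definedness, I would compare two smooth presentations $P\colon X\to\cX$ and $P'\colon X'\to\cX$ through the roof $X\xleftarrow{q} W\xrightarrow{q'} X'$, where $W$ is a scheme equipped with a smooth surjective morphism to the algebraic space $X\times_\cX X'$ (such a scheme exists because algebraic spaces admit smooth surjective covers by schemes). Composing with the two projections makes $q$ and $q'$ smooth and surjective, and $W$ is again locally Noetherian. Since $q^{-1}(X\times_\cX\cZ)=W\times_\cX\cZ=(q')^{-1}(X'\times_\cX\cZ)$, the whole statement reduces to the following claim: if $q\colon W\to X$ is a smooth surjective morphism of locally Noetherian schemes and $Z\subseteq X$ is closed, then $\codim_W q^{-1}(Z)=\codim_X Z$.

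The point of this claim --- and the place where the subtlety flagged at the start of the section has to be handled --- is that one should not try to reason with chains of irreducible closed subsets of $W$, because $q$ can turn an irreducible scheme into a reducible one; instead I would work with local rings. For any scheme and closed subscheme, $\codim_X Z=\min_z\dim\cO_{X,z}$ with $z$ running over the generic points of the irreducible components of $Z$, and moreover $\dim\cO_{X,z'}\ge\codim_X Z$ for all $z'\in Z$. Granting this, the claim follows in two directions from the dimension formula for flat local homomorphisms, $\dim\cO_{W,w}=\dim\cO_{X,q(w)}+\dim\cO_{W_{q(w)},w}$, which applies since $q$ is flat and locally of finite type (see 6.1.2 of \cite{ega42}). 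For $\ge$: any $w\in q^{-1}(Z)$ has $\dim\cO_{W,w}\ge\dim\cO_{X,q(w)}\ge\codim_X Z$. For $\le$: take a generic point $z$ of a component of $Z$ with $\dim\cO_{X,z}=\codim_X Z$, use surjectivity of $q$ to get a point of $W$ over $z$, refine it to a generic point $w$ of a component of the fiber $W_z$ so that $\dim\cO_{W_z,w}=0$, and conclude $\dim\cO_{W,w}=\dim\cO_{X,z}=\codim_X Z$ with $w\in q^{-1}(Z)$.

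I expect the claim to be the only real obstacle: the temptation is to prove it by restricting to irreducible components, but since smooth covers break irreducibility this has to be circumvented, which is exactly why I would formulate codimension via the local rings at the generic points of the components of $Z$ and of $q^{-1}(Z)$, and lean on the flat-local dimension formula rather than on Proposition \ref{prop:dim-formula}. The reduction to the claim via the roof, and the finiteness assertion, are then purely formal.
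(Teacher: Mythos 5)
Your proof is correct and follows the same reduction as the paper's: both compare two presentations through a scheme smoothly covering the algebraic space $X\times_{\cX}X'$, thereby reducing everything to the invariance of codimension under a smooth surjective morphism of locally Noetherian schemes, and both handle finiteness via local Noetherianity of the presentation. The only difference is that where the paper simply cites Corollary 6.1.4 of \cite{ega42} for that invariance, you prove it directly from the flat local dimension formula applied at generic points of components -- a correct, self-contained substitute that also makes explicit why one should avoid chains of irreducible subsets here.
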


\begin{proof} We first verify that codimension is well defined. If 
$X \to \cX$, $X' \to \cX$
are both smooth presentations of $\cX$, then
$X \times_{\cX} X'$ is a smooth cover of both $X$ and $X'$; in principle,
it may be that $X \times_{\cX} X'$ is an algebraic space, but even so we
can take a further smooth cover to obtain a common smooth cover of $X$ and
$X'$ be a scheme. Thus, it is
enough to see that codimension is preserved under passage to a smooth
cover. But this is true more generally for flat covers, see
Corollary 6.1.4 of \cite{ega42}.

We next verify finiteness. By definition, any smooth presentation
$P:X \to \cX$ has
$X$ locally Noetherian. Now, $\codim_{\cX} \cZ := \codim_X P^{-1} Z$,
and in order to establish finiteness of $\codim_X P^{-1} Z$, it is enough
to show that $\codim_X Z'$ is finite for some irreducible component $Z'$
of $P^{-1} Z$. But let $Z'$ be any irreducible component; since $X$ is
locally Noetherian, there exists an open subscheme $U \subseteq X$ which
meets $Z'$ and is Noetherian. Then $\codim_X Z'= \codim_U Z'$, and
$\codim_U Z'$ is finite, as desired.
\end{proof}

\begin{prop}\label{prop:stack-codim-comp} Let $\cZ \subseteq \cX$ be a
closed substack. Then $\codim_{\cX} \cZ=0$ if and only if $\cZ$ contains
an irreducible component of $\cX$. 
\end{prop}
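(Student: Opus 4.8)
The plan is to unwind the definition of stack codimension through a smooth presentation $P : X \to \cX$ and reduce to the corresponding statement for the scheme $X$ and the closed subscheme $Z := P^{-1}(\cZ) = X \times_{\cX} \cZ$. By definition $\codim_{\cX} \cZ = \codim_X Z$, and $\codim_X Z = 0$ if and only if some irreducible component of $Z$ is also an irreducible component of $X$ (this is just the scheme-level fact: codimension zero means the generic point of some component of $Z$ is a generic point of $X$). So the real content is the dictionary between irreducible components of $\cX$ and irreducible components of $X$ — more precisely, that a component $Z'$ of $Z$ is a component of $X$ if and only if $P(Z')$, which will be a component of $\cZ$, is a component of $\cX$.

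First I would record the component dictionary for a smooth presentation. Since $P$ is smooth and surjective, it is open and allows lifting of generizations (cited in the preliminaries via \cite{l-m-b}, Prop.\ 5.6 and Cor.\ 5.7.1). Openness means $P$ carries generic points of $X$ to generic points of $\cX$; conversely, lifting of generizations means every generic point of $\cX$ is the image of some point of $X$ which is then necessarily a generic point of $X$ (any generization of it maps to a generization of the generic point, which is the generic point itself, and then one uses that $P$ is open to push the generization back down — or more directly, since $P$ is flat and open, $P^{-1}$ of an irreducible component is a union of irreducible components of $X$). Thus the components of $\cX$ are exactly the closures of images of components of $X$. The same statement applies to the smooth cover $Z \to \cZ$ induced by base change (smoothness and surjectivity are preserved), giving the analogous dictionary between components of $\cZ$ and components of $Z$.

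With this in hand, the proof is short. Suppose $\codim_{\cX}\cZ = 0$, i.e.\ $\codim_X Z = 0$, so some component $Z'$ of $Z$ is a component of $X$. Its generic point $\eta'$ is then a generic point of $X$, so $P(\eta')$ is a generic point of $\cX$; since $\eta' \in Z$, $P(\eta')$ lies in $\cZ$, hence is a generic point of $\cZ$ that is also a generic point of $\cX$ — that is, $\cZ$ contains a component of $\cX$. Conversely, if $\cZ$ contains a component of $\cX$, pick its generic point $\xi$; lifting through $P$ gives a point of $Z$ over $\xi$ which is a generic point of $X$, hence a generic point of a component $Z'$ of $Z$ with $Z'$ a component of $X$, so $\codim_X Z = 0$.

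The only mild subtlety — the step I expect to be the main obstacle — is the direction "$P^{-1}$ of a component of $\cX$ contains a component of $X$," i.e.\ that no component of $\cX$ is "missed" by the presentation; this is where surjectivity together with flatness (openness) of $P$ is essential, and one should phrase it carefully since $\cX$ itself need not be irreducible and a single component of $X$ may dominate several components of $\cX$ only in degenerate cases (in fact smoothness prevents this, but the argument only needs the one-directional statements above). Everything else is the scheme-level fact that codimension zero $\Leftrightarrow$ shared generic point, already implicit in the preliminaries, combined with the generic-point bookkeeping for smooth covers.
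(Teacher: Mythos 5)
Your proposal is correct and follows essentially the same route as the paper: reduce via the definition to the scheme-level fact that $\codim_X Z=0$ exactly when $Z$ contains a component of $X$, and then translate between components of $X$ and of $\cX$ using that the smooth surjection $P$ carries generic points to generic points (and, for the converse, that every generic point of $\cX$ is the image of a generic point of $X$). The paper's proof is just a terser version of this, so no further comment is needed.
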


\begin{proof} The statement is obvious for schemes. Thus, if $P:X \to \cX$
is a smooth presentation, and $Z=P^{-1}(\cZ)$ it suffices to prove that 
$\cZ$ contains an irreducible component of $\cX$ if and only if $Z$ 
contains an irreducible component of $X$. But this is clear from the
fact that $P$ maps generic points to generic points.
\end{proof}

We now state our main intrinsically stack-theoretic results, deferring for
the moment the supporting results needed to prove them. We begin with 
a statement to the effect that, in the case of stacks, codimension still 
behaves the same with respect to irreducible components. 

\begin{prop}\label{prop:stack-codim} Let
$\cZ \subseteq \cX$ be a closed substack. Then
\begin{align*} \codim_{\cX} \cZ 
& = \min_{\cZ' \subseteq \cZ} \codim_{\cX} \cZ' \\
& = \min_{\cZ' \subseteq \cZ} 
\left(\max_{\cX' \subseteq \cX: \cZ' \subseteq \cX'}
\codim_{\cX'} \cZ'\right),
\end{align*}
where $\cZ'$ and $\cX'$ range over irreducible components (or equivalently,
irreducible closed substacks) of $\cZ$ and $\cX$ respectively.
\end{prop}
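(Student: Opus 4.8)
The plan is to reduce the statement to the known scheme-theoretic fact recalled in Section~\ref{sec:prelims}, namely that for a closed subscheme $Z \subseteq X$ of a locally Noetherian scheme,
\[
\codim_X Z = \min_{Z' \subseteq Z} \codim_X Z'
= \min_{Z' \subseteq Z}\left(\max_{X' \supseteq Z'} \codim_{X'} Z'\right),
\]
with $Z', X'$ ranging over irreducible components. So fix a smooth presentation $P: X \to \cX$, and set $Z = P^{-1}(\cZ)$; by definition $\codim_{\cX} \cZ = \codim_X Z$, and by Proposition~\ref{prop:codim-defd} this is independent of the presentation. The first equality I would handle by matching up the minima on both sides: I need to show that as $\cZ'$ ranges over irreducible components of $\cZ$, the quantity $\codim_{\cX} \cZ'$ ranges over (or at least has the same minimum as) the quantities $\codim_X Z''$ where $Z''$ ranges over irreducible components of $Z$. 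This should follow because $P^{-1}(\cZ')$ is a union of irreducible components of $Z$ (smooth morphisms are flat, hence equidimensional on fibers, so no component of $Z$ can strictly dominate a component of $P^{-1}(\cZ')$ while also lying over $\cZ'$), and conversely every irreducible component $Z''$ of $Z$ maps to some irreducible component $\cZ'$ of $\cZ$; then $\codim_{\cX}\cZ' = \codim_X P^{-1}(\cZ')$ is the min of $\codim_X Z''$ over the components $Z''$ of $P^{-1}(\cZ')$. Taking the min over all $\cZ'$ then recovers $\min_{Z''} \codim_X Z'' = \codim_X Z = \codim_{\cX}\cZ$.

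For the second equality, the essential point is to relate, for a fixed irreducible closed substack $\cZ'$, the quantity $\max_{\cX' \supseteq \cZ'} \codim_{\cX'} \cZ'$ to the corresponding scheme-theoretic maxima upstairs. Here I would first note that an irreducible closed substack $\cX' \subseteq \cX$ pulls back to a closed subscheme $P^{-1}(\cX')$ of $X$ which need \emph{not} be irreducible — this is exactly the phenomenon the section warns about — but each of its irreducible components $X''$ is a closed subscheme of $X$, and $\codim_{\cX'} \cZ'$ is computed as $\codim_{P^{-1}(\cX')} P^{-1}(\cZ')$ (restricting $P$ to a smooth presentation of $\cX'$). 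The key subtlety: $\codim_{P^{-1}(\cX')} P^{-1}(\cZ')$ is a \emph{minimum} over components $Z'''$ of $P^{-1}(\cZ')$ of $\codim$ inside whichever component $X''$ of $P^{-1}(\cX')$ contains $Z'''$; so I must track carefully that, as $\cX'$ ranges over irreducible closed substacks containing $\cZ'$ and as the components are chosen, we recover exactly the scheme-level double extremum $\max_{X''\supseteq Z'''}\codim_{X''} Z'''$. The crucial compatibility is that the \emph{irreducible} closed substacks $\cX'$ of $\cX$ containing $\cZ'$ correspond, via $P$, to irreducible closed subschemes $X''$ of $X$ containing a chosen component of $P^{-1}(\cZ')$ — in the sense that $\overline{P(X'')}$ is such a substack and conversely $X''$ is a component of $P^{-1}(\overline{P(X'')})$ — because $P$ is open and sends generic points to generic points.

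The step I expect to be the main obstacle is precisely this last correspondence: making rigorous that the smooth cover $P$ sets up a clean dictionary between irreducible closed substacks of $\cX$ (resp.\ $\cZ$) and the irreducible closed subschemes of $X$ that arise as components of their preimages, \emph{and} that under this dictionary the nested codimension expressions match up on the nose. The danger is the reducibility of preimages: a single irreducible substack $\cX'$ can pull back to many components $X''$, of possibly different codimensions, and one must check that taking $\max$ over substacks upstairs doesn't lose or gain anything relative to taking $\max$ over subschemes. I would resolve this by working locally on $X$ at a generic point of a component of $Z$: choosing a suitable affine Noetherian open and reducing the stack-level extremum, one point at a time, to the scheme-level extremum already recorded in Section~\ref{sec:prelims}. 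Once the dictionary is in place, both equalities of the proposition fall out by substituting the scheme-level formula and unwinding the definition of stack codimension as $\codim_X P^{-1}(-)$.
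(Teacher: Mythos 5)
Your overall strategy is the same as the paper's: fix a smooth presentation $P:X\to\cX$, unwind the definition $\codim_{\cX}\cZ=\codim_X P^{-1}(\cZ)$, and reduce both equalities to the scheme-level identity recalled in \S\ref{sec:prelims}. Your treatment of the first equality is essentially fine (every irreducible component of $P^{-1}(\cZ')$ is an irreducible component of $Z$, because such a component dominates $\cZ'$ and so cannot be properly contained in a component of $Z$ dominating a different component of $\cZ$), and you correctly locate the real difficulty in the second equality: a single irreducible closed substack $\cX'\supseteq\cZ'$ pulls back to many components upstairs, $\codim_{\cX'}\cZ'$ is a \emph{minimum} over components of the preimage of $\cZ'$, and one must show the nested $\max$--$\min$ collapses to the scheme-level $\max_{X''\supseteq Z'''}\codim_{X''}Z'''$.

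The gap is that you flag this obstacle but do not actually overcome it. The statement you need is the paper's Lemma \ref{lem:irred-codim}: for an irreducible locally Noetherian stack $\cX$ with irreducible closed substack $\cZ$ and smooth presentation $P:X\to\cX$, \emph{all} irreducible components of $P^{-1}(\cZ)$ have the same codimension in $X$. Once you have this (applied both to $\cX$ itself and to each irreducible component $\cX'$, via the presentation $P^{-1}(\cX')\to\cX'$), the minima over components become constants, any single component $Z$ can be fixed, and the identity reduces to $\codim_X Z=\max_{X'\supseteq Z}\codim_{X'}Z$ for schemes. Your proposed fix --- ``working locally on $X$ at a generic point of a component of $Z$'' --- cannot produce this, because the whole issue is comparing codimensions of \emph{distinct} components of $P^{-1}(\cZ)$, which live at different points of $X$ and need not even lie in a common affine. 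The paper's device is global: given two components $Z'$, $Z''$ of $P^{-1}(\cZ)$, it finds a component of $Z\times_{\cX}Z$ (after a further smooth cover if $X\times_{\cX}X$ is only an algebraic space) that is simultaneously a component of the preimage of $Z'$ under one projection and of $Z''$ under the other, and then applies invariance of codimension under smooth covers of irreducible locally Noetherian schemes (Proposition \ref{prop:irred-codim}, via Corollary 6.1.4 of EGA IV$_2$) twice. Without this fiber-product argument, or an equivalent substitute, your dictionary between substacks downstairs and subschemes upstairs does not close up and the $\max$--$\min$ interchange is unjustified.
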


The next two results are generalizations to stacks of standard statements
in the scheme setting, both of which require catenary hypotheses. The 
first is additivity of codimension.

\begin{prop}\label{prop:stack-codim-add}
Let $\cX$ be an irreducible algebraic stack,
locally of finite type over a universally catenary base scheme, and
$\cZ' \subseteq \cZ \subseteq \cX$ irreducible closed substacks.
Then
$$\codim_{\cX} \cZ'=\codim_{\cX} \cZ+\codim_{\cZ} \cZ'.$$
\end{prop}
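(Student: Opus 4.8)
The plan is to reduce the statement to the corresponding fact about schemes by passing to a smooth presentation, but the subtlety (as flagged in the section introduction) is that a smooth presentation $P : X \to \cX$ of an irreducible stack $\cX$ need not have $X$ irreducible, so one cannot simply invoke scheme-level additivity of codimension on $X$ directly. First I would fix a smooth presentation $P : X \to \cX$, set $Z = P^{-1}(\cZ)$ and $Z' = P^{-1}(\cZ')$, and recall that by definition $\codim_{\cX} \cZ = \codim_X Z$, $\codim_{\cX} \cZ' = \codim_X Z'$, and $\codim_{\cZ} \cZ' = \codim_Z Z'$ (the last using that $Z \to \cZ$ is itself a smooth presentation). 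So the goal becomes $\codim_X Z' = \codim_X Z + \codim_Z Z'$.

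The key step is to work component by component on $X$. Since $P$ is smooth and surjective and $\cX$ is irreducible, every irreducible component $X_i$ of $X$ dominates $\cX$ (maps generic points to generic points, using openness of smooth morphisms), and hence $X_i$ meets both $Z$ and $Z'$ in the correct codimension: more precisely, because $\cX$ is locally of finite type over a universally catenary base, $X$ is universally catenary, so each $X_i$ is catenary, and the scheme-level additivity of codimension holds on each $X_i$. I would then argue that $\codim_X Z = \min_i \codim_{X_i}(Z \cap X_i)$ and likewise for $Z'$, and that a minimizing component for $Z'$ can be chosen to also control the other two terms — here one uses Proposition \ref{prop:stack-codim} (the min–max characterization of codimension for stacks, or rather its scheme incarnation) to line up the irreducible component of $Z'$, the irreducible component of $X$ containing it, and the corresponding component of $Z$. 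Equivalently, I would pick an irreducible closed $W' \subseteq Z'$ and $X_i \supseteq W'$ with $\codim_X Z' = \codim_{X_i} W'$, let $W$ be the closure in $Z$ of the generic point of $W'$ under an appropriate chain, and apply additivity in the catenary scheme $X_i$: $\codim_{X_i} W' = \codim_{X_i} W + \codim_W W'$, then bound each piece against the stack-level quantities.

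The main obstacle I anticipate is exactly this bookkeeping across the (possibly many) irreducible components of $X$: one must be careful that the component of $X$ realizing the minimal codimension of $Z'$ also realizes (or at least does not beat) the minimal codimension of $Z$, and that the "relative" codimension $\codim_Z Z'$ computed on $Z$ matches what happens inside that chosen component. The clean way to handle this is to prove the two inequalities separately. For $\codim_X Z' \le \codim_X Z + \codim_Z Z'$: take components $X' \subseteq \cX$, $\cZ'' $ realizing the max–min expressions for the right-hand side and chain them, using additivity in a suitable catenary scheme chart. For the reverse inequality $\codim_X Z' \ge \codim_X Z + \codim_Z Z'$: take a generic point of a component of $Z'$ of minimal codimension in $X$, work in an affine Noetherian open of a component $X_i$ of $X$ through it, and apply scheme additivity there, noting that $\codim_{X_i}(Z \cap X_i) \ge \codim_X Z$ and $\codim_{Z\cap X_i}(Z'\cap X_i) \ge \codim_Z Z'$ since passing to a component can only increase codimension. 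Combining the two gives equality. The universally-catenary-over-a-base hypothesis enters precisely to guarantee that $X$, and hence each $X_i$, is catenary so that scheme-level additivity is available; the reduction itself is then essentially formal given Propositions \ref{prop:codim-defd} and \ref{prop:stack-codim}.
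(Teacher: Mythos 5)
Your overall plan is the paper's plan: pass to a smooth presentation, acknowledge that $X$ may be reducible even though $\cX$ is irreducible, and reduce to additivity of codimension along a chain of irreducible closed subschemes inside a single (catenary) irreducible component of $X$. However, the step where you "bound each piece against the stack-level quantities" has a genuine gap. You assert that $\codim_{X_i}(Z \cap X_i) \geq \codim_X Z$ and $\codim_{Z\cap X_i}(Z'\cap X_i) \geq \codim_Z Z'$ "since passing to a component can only increase codimension." This is backwards: for a fixed irreducible closed $W$, the definition recalled at the start of \S\ref{sec:prelims} gives $\codim_X W = \max_{X_i \supseteq W} \codim_{X_i} W$, so restricting the ambient scheme to one of its irreducible components can only \emph{decrease} codimension. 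Concretely, if $X$ is the union of a plane $X_1$ and a line $X_2$ meeting at a point $p$ and $Z=\{p\}$, then $\codim_X Z = 2$ but $\codim_{X_2}(Z\cap X_2)=1$. So the inequality you need for the direction $\codim_X Z' \geq \codim_X Z + \codim_Z Z'$ is false for general schemes, and a symmetric problem afflicts the $\leq$ direction.

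What saves the argument in the situation at hand is not Proposition \ref{prop:stack-codim} (which, since $\cX$ and $\cZ$ are already irreducible, gives nothing new here) but Lemma \ref{lem:irred-codim}: for the preimage of an irreducible closed substack under a smooth presentation, the quantity $\codim_{X'} W$ is \emph{independent} of the choice of irreducible component $W$ of the preimage and of irreducible component $X'$ of the ambient containing it. With that in hand your two inequalities collapse to equalities and the whole proof becomes the paper's one-liner: choose any chain $W' \subseteq W \subseteq X'$ with $W'$ a component of $P^{-1}(\cZ')$, $W$ a component of $P^{-1}(\cZ)$ containing it, and $X'$ a component of $X$ containing $W$; the lemma identifies $\codim_{\cX}\cZ' = \codim_{X'}W'$, $\codim_{\cX}\cZ = \codim_{X'}W$, and $\codim_{\cZ}\cZ' = \codim_W W'$, and additivity in the irreducible catenary scheme $X'$ is immediate from the definition of catenary. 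You should replace the component-comparison inequalities by an explicit appeal to Lemma \ref{lem:irred-codim}; as written, the justification for the key step is incorrect.
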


The second is the generalization of the ``dimension formula''
to stacks.

\begin{prop}\label{prop:dim-formula-stacks} Let $\cY$ be an algebraic
stack, of finite type over a universally catenary scheme, 
and $f:\cX \to \cY$ a morphism locally of finite
type. Let $\cZ' \subseteq \cZ$ be irreducible closed subschemes of $\cX$, 
and let $\eta'$ and $\eta$ be their respective
generic points. Then
$$\codim_{\cZ} \cZ'-\codim_{\overline{f(\cZ)}} \overline{f(\cZ')}
= \dim \cZ_{f(\eta)}- \dim \cZ'_{f(\eta')}.$$
\end{prop}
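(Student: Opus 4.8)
The plan is to reduce to the dimension formula for schemes, Proposition \ref{prop:dim-formula}, by passing to a smooth presentation of $f$. Fix a smooth presentation $P: Y \to \cY$, $P': X \to \cX \times_\cY Y$ of $f$; this gives the induced morphism $g: X \to Y$ and, composing $P'$ with the projection $\cX \times_\cY Y \to \cX$, a smooth surjective presentation $Q: X \to \cX$. Since $Y$ is locally of finite type over a universally catenary scheme it is itself universally catenary, and $g$ is locally of finite type, so Proposition \ref{prop:dim-formula} applies to $g$; after shrinking $\cX$ to a quasi-compact open substack containing $\eta'$ (which automatically contains $\eta$) we may moreover take $X$ and $Y$ Noetherian, which is convenient for the constructibility arguments below.

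I would then fix compatible irreducible closed subsets upstairs: pick a generic point $\widetilde{\eta}'$ of $Q^{-1}(\cZ')$ (it lies over $\eta'$, since $Q$ is open), set $\widetilde{\cZ}' = \overline{\{\widetilde{\eta}'\}}$, let $\widetilde{\cZ}$ be an irreducible component of $Q^{-1}(\cZ)$ containing $\widetilde{\cZ}'$, and let $\widetilde{\eta}$ be its generic point, which lies over $\eta$. Proposition \ref{prop:dim-formula} applied to $g$ at $\widetilde{\cZ}' \subseteq \widetilde{\cZ}$ reads
$$\codim_{\widetilde{\cZ}} \widetilde{\cZ}' - \codim_{\overline{g(\widetilde{\cZ})}} \overline{g(\widetilde{\cZ}')} = \dim \widetilde{\cZ}_{g(\widetilde{\eta})} - \dim \widetilde{\cZ}'_{g(\widetilde{\eta}')}.$$
Since a generic point of a preimage lies on a unique irreducible component, $\dim \widetilde{\cZ}_{g(\widetilde{\eta})}$ is the fiber dimension at $\widetilde{\eta}$ of the presentation of $f|_{\cZ}$ induced by $P,P'$, so by Proposition \ref{prop:fiber-dim-stacks} it equals $\dim \cZ_{f(\eta)} + \reldim_{\widetilde{\eta}} P'$, and likewise $\dim \widetilde{\cZ}'_{g(\widetilde{\eta}')} = \dim \cZ'_{f(\eta')} + \reldim_{\widetilde{\eta}'} P'$. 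As $\widetilde{\eta}$ and $\widetilde{\eta}'$ lie in the irreducible set $\widetilde{\cZ}$ and $P'$ is smooth, $\reldim_{\widetilde{\eta}} P' = \reldim_{\widetilde{\eta}'} P'$, so the two corrections cancel and, for every such choice, the right-hand side above equals $\dim \cZ_{f(\eta)} - \dim \cZ'_{f(\eta')}$.

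It remains to choose the components so that the left-hand side equals $\codim_{\cZ} \cZ' - \codim_{\overline{f(\cZ)}} \overline{f(\cZ')}$. For the source term, the definition of stack codimension, invariance of codimension under flat (hence smooth) covers (Corollary 6.1.4 of \cite{ega42}), and the component-wise formula of Proposition \ref{prop:stack-codim} let us pick $\widetilde{\eta}'$ realizing the outer minimum and $\widetilde{\cZ}$ the inner maximum in that formula, so that $\codim_{\widetilde{\cZ}} \widetilde{\cZ}' = \codim_{\cZ} \cZ'$. The step I expect to be the main obstacle is the corresponding identification $\codim_{\overline{g(\widetilde{\cZ})}} \overline{g(\widetilde{\cZ}')} = \codim_{\overline{f(\cZ)}} \overline{f(\cZ')}$: a smooth presentation turns the irreducible substacks $\overline{f(\cZ)}$ and $\overline{f(\cZ')}$ into reducible schemes, and $\overline{g(\widetilde{\cZ})}$, $\overline{g(\widetilde{\cZ}')}$ need not be among the components of their preimages in $Y$, so the codimension in $\cY$ cannot be read off from this presentation without work. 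To get around this I would use the generization-lifting property of the smooth cover $P$ together with Chevalley's theorem — which shows that the image under $g$ of $Q^{-1}(\cZ)$ meets every irreducible component of the preimage of $\overline{f(\cZ)}$ in a dense open, hence contains its generic point — to arrange that $\widetilde{\cZ}$ (and $\widetilde{\cZ}'$ inside it) have images that are components of the relevant preimages realizing the required minimum and maximum; making this choice compatibly with the source-side choice is exactly what the deferred supporting lemmas of this section on codimension and generic points under smooth covers are designed to supply. Combining the two codimension identifications with the displayed equality then gives the stated formula.
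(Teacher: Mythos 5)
Your overall strategy is the paper's: pass to a smooth presentation, apply the scheme dimension formula (Proposition \ref{prop:dim-formula}) to the induced morphism $g:X\to Y$, and match up terms. Your treatment of the fiber-dimension terms is correct and essentially identical to the paper's (the $\reldim P'$ corrections cancel because the relative dimension of $P'$ is locally constant, hence constant on the irreducible set $\widetilde{\cZ}$). The problem is the step you yourself flag as the main obstacle: you leave the identification of the two codimension terms to unnamed ``deferred supporting lemmas,'' and the mechanism you propose for closing it --- choosing $\widetilde{\eta}'$ and $\widetilde{\cZ}$ to realize the min/max in the definition of $\codim_{\cZ}\cZ'$ \emph{and simultaneously} arranging, via Chevalley and generization lifting, that $\overline{g(\widetilde{\cZ})}$, $\overline{g(\widetilde{\cZ}')}$ are distinguished components of the preimages of $\overline{f(\cZ)}$, $\overline{f(\cZ')}$ --- is not carried out, and there is no a priori reason a single choice of $(\widetilde{\cZ}',\widetilde{\cZ})$ can satisfy both the source-side and target-side constraints at once. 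As written, this is a genuine gap.

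The actual resolution is different in two respects. First, your worry that $\overline{g(\widetilde{\cZ})}$ and $\overline{g(\widetilde{\cZ}')}$ need not be components of the preimages is unfounded: the generic point $\widetilde{\eta}$ of $\widetilde{\cZ}$ is a generic point of the fiber of $X\to\cX$ over $\eta$, and since that fiber is (up to smooth covers) a base change along a field extension of the fiber of $Y\to\cY$ over $f(\eta)$, its generic points map to generic points of $P^{-1}(f(\eta))$; hence $g(\widetilde{\eta})$ is a maximal point of $P^{-1}(\overline{f(\cZ)})$ and $\overline{g(\widetilde{\cZ})}$ is an irreducible component of it (similarly for $\cZ'$). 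Second, and more importantly, no selection of components realizing a min or max is needed on either side: Lemma \ref{lem:irred-codim} (the key supporting result of this section, resting on Proposition \ref{prop:irred-codim} and the universally catenary hypothesis) shows that for an irreducible closed substack of an irreducible stack, the codimension equals $\codim_{W}W'$ for \emph{every} component $W'$ of the preimage and \emph{every} component $W$ containing it. This independence of choices is what makes an arbitrary compatible pair $(\widetilde{\cZ}',\widetilde{\cZ})$ work simultaneously for all four codimensions, and it is the ingredient your proposal is missing; your appeal to Proposition \ref{prop:stack-codim} does not substitute for it, since that proposition concerns components of the substacks themselves (here already irreducible), not components of their preimages under the presentation.
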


Propositions \ref{prop:stack-codim}, \ref{prop:stack-codim-add} and 
\ref{prop:dim-formula-stacks} will be easy consequences of the following.

\begin{lem}\label{lem:irred-codim} Let $\cX$ be a locally Noetherian 
irreducible algebraic stack,
and $\cZ \subseteq \cX$ an irreducible closed substack. Let $P:X \to \cX$
be a smooth presentation of $\cX$, and $P':Z \to \cZ$ the induced 
presentation of $\cZ$. Then for every irreducible component $Z'$ of $Z$,
we have
$$\codim_{\cX} \cZ=\codim_{X} Z'.$$
If further $\cX$ is locally of finite type over a universally catenary base 
scheme, then for every irreducible component $Z'$ of $Z$,
and every irreducible component $X'$ of $X$ containing $Z'$, we have
$$\codim_{\cX} \cZ=\codim_{X'} Z'.$$
\end{lem}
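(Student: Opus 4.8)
The plan is to reduce the stack-theoretic statement to the scheme-theoretic statement by passing to a smooth presentation, and then to grapple with the fact that smooth covers do not preserve irreducibility. First I would recall that by definition $\codim_{\cX}\cZ = \codim_X Z$, where $Z = P^{-1}(\cZ)$, and that by definition of codimension for schemes $\codim_X Z = \min_{Z'} \codim_X Z'$ as $Z'$ ranges over irreducible components of $Z$. The real content of the first assertion is therefore that this minimum is in fact achieved by \emph{every} component $Z'$, i.e. that all irreducible components of $Z$ have the same codimension in $X$. To see this, I would use the smoothness (hence openness, and the generization-lifting property) of $P$ together with the irreducibility of $\cX$ and $\cZ$: fix two components $Z_1, Z_2$ of $Z$. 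Since $\cZ$ is irreducible with generic point $\xi$, and $P'\colon Z \to \cZ$ is smooth and surjective, both $Z_1$ and $Z_2$ dominate $\cZ$, so their generic points both lie over $\xi$. The key local statement is that if two points of $X$ lie over the same point of $\cX$, then near those points $X$ looks the same up to smooth morphisms — more precisely, $X\times_{\cX} X$ is a smooth cover of $X$ via either projection, and one can find a common smooth cover $W$ with points mapping to chosen points over $Z_1$ and $Z_2$; since codimension is preserved under smooth (indeed flat) covers by Corollary 6.1.4 of \cite{ega42}, pulling $Z_1$ and $Z_2$ back to $W$ and comparing gives $\codim_X Z_1 = \codim_X Z_2$.

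For the second assertion, suppose in addition that $\cX$ is locally of finite type over a universally catenary base scheme $S$; then $X$ is universally catenary, being locally of finite type over $S$. Fix an irreducible component $Z'$ of $Z$ and an irreducible component $X'$ of $X$ containing it. I want $\codim_{X'} Z' = \codim_{\cX}\cZ = \codim_X Z'$. The inequality $\codim_{X'} Z' \leq \codim_X Z'$ is automatic, and the definition of $\codim_X Z'$ as the maximum of $\codim_{X''} Z'$ over components $X''$ of $X$ containing $Z'$ reduces us to showing that this maximum is also a minimum, i.e.\ that $\codim_{X'} Z'$ is independent of the choice of ambient component $X'$ containing $Z'$. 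I would prove this by a two-step comparison: first that it is independent of $X'$ for a \emph{fixed} component $Z'$, and then (combined with the first assertion) that it is independent of $Z'$ as well. For the first step, again I would pass to the common smooth cover $X\times_{\cX} X$ (or a scheme smoothly covering it): an irreducible component of $X$ is, locally, the image of an irreducible component of this double fiber product, and the catenary hypothesis lets me invoke the dimension formula (Proposition \ref{prop:dim-formula}) to compare codimensions of the pullback of $Z'$ in different components. Alternatively, and perhaps more cleanly, I would use that a smooth morphism of universally catenary schemes is such that $\reldim$ is locally constant and the dimension formula applies, so that for a smooth cover $q\colon W \to X$ and a component $W'$ of $W$ dominating $X'$ with a component $Z'_W$ of $q^{-1}(Z')$ in it, $\codim_{W'} Z'_W = \codim_{X'} Z'$; varying over the finitely many components, and using that $W$ is a common smooth cover of $X$ over $\cX$, forces agreement.

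The main obstacle, and the step that requires the most care, is precisely this business of comparing two components of a smooth presentation $X$ of $\cX$ that sit over the ``same'' irreducible closed substack: there is no morphism between them, and one must route the comparison through a common smooth cover. The subtlety is that even the statement ``$X\times_{\cX} X$ smoothly covers $X$'' may produce an algebraic space rather than a scheme, so one must take a further smooth cover to land in schemes — exactly the maneuver already used in Proposition \ref{prop:codim-defd}. Once one commits to working on such a common cover $W$, the actual codimension bookkeeping is routine: codimension is preserved under flat (hence smooth) covers by Corollary 6.1.4 of \cite{ega42} for the first assertion, and the universally catenary hypothesis plus the dimension formula (Proposition \ref{prop:dim-formula}) handle the second assertion, since on $W$ the relative dimension of the cover over $X$ is locally constant and cancels out when one compares codimensions of the various pullbacks of $Z'$. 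I would organize the write-up so that the first assertion is proved in full first, and the second assertion quotes it together with a single application of the dimension formula to the smooth presentation restricted to a suitable component.
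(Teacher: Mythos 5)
Your proposal is correct and follows essentially the same route as the paper: reduce to the scheme statement (flat preservation of codimension for the first part, the dimension formula for the second), and compare distinct irreducible components of the presentation by routing through the common smooth cover $X\times_{\cX}X$ (replaced by a scheme cover if it is only an algebraic space), choosing a single irreducible component upstairs that is simultaneously a component of the preimages of both components downstairs. The paper merely packages this last point as one explicit "claim" producing $\widetilde{Z}'\subseteq\widetilde{X}'$ handling both assertions at once, which is the same maneuver you describe.
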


As usual, the proof of the lemma reduces to a related result on
schemes, which we state separately.

\begin{prop}\label{prop:irred-codim} Let $f:X \to Y$ be a smooth morphism
of schemes, with $Y$ irreducible and locally Noetherian. Let 
$Z \subseteq Y$ be an irreducible closed subscheme. Then if $Z'$ is any
irreducible component of $f^{-1}(Z)$, we have
$$\codim_Y Z = \codim_{X} Z'.$$
If further $Y$ is universally catenary, then for any irreducible component
$Z'$ of $f^{-1}(Z)$, and any irreducible component $X'$
of $X$ containing $Z'$, we have
$$\codim_Y Z = \codim_{X'} Z'.$$
\end{prop}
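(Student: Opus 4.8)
The plan is to reduce the statement to purely local commutative algebra via the behavior of smooth morphisms on generizations. First I would observe that codimension of an irreducible closed subscheme $W$ inside an irreducible scheme $T$ can be computed locally: $\codim_T W = \dim \mathcal{O}_{T,w}$, where $w$ is the generic point of $W$. Since everything in sight is determined by what happens near the generic points, I would pass to the local picture: let $z'$ be the generic point of $Z'$ and $z = f(z')$; since $f$ is smooth and hence open, $z'$ maps into $Z$ — and in fact, because $Z'$ is a component of $f^{-1}(Z)$, I claim $z'$ must map to the generic point $z$ of $Z$. The key input here is that smooth (hence flat, open) morphisms lift generizations: if $z'$ did not map to the generic point of $Z$, one could generize within $Z$ and lift the generization to a strictly larger generization of $z'$ still lying in $f^{-1}(Z)$, contradicting that $z'$ is a generic point of a component.

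Granting that $f(z') = z$, the first equality $\codim_Y Z = \codim_X Z'$ follows from the standard fact that for a flat local homomorphism $\mathcal{O}_{Y,z} \to \mathcal{O}_{X,z'}$ whose fiber $\mathcal{O}_{X,z'} \otimes \kappa(z)$ has dimension zero (which holds precisely because $z'$ is a generic point of the fiber $f^{-1}(z)$, equivalently of $f^{-1}(Z)$ near $z'$), one has $\dim \mathcal{O}_{X,z'} = \dim \mathcal{O}_{Y,z} + 0$. This is the dimension formula for flat morphisms, EGA IV 6.1.2, and smoothness gives flatness plus the vanishing of the fiber dimension at the relevant point. So $\codim_X Z' = \codim_Y Z$.

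For the second assertion, suppose $Y$ is universally catenary and $X'$ is an irreducible component of $X$ containing $Z'$. Since $f$ is smooth, hence flat and open, $f$ restricted to any component $X'$ is dominant onto $Y$ (as $Y$ is irreducible and the image is open and nonempty), so $X'$ dominates $Y$; also $X$ is universally catenary, being locally of finite type over the universally catenary $Y$. Now I would apply the dimension formula, Proposition \ref{prop:dim-formula}, to the morphism $f|_{X'}: X' \to Y$ and the pair $Z' \subseteq X'$: this gives
$$\codim_{X'} Z' - \codim_{\overline{f(X')}} \overline{f(Z')} = \dim X'_{f(z')} - \dim Z'_{f(z')}.$$
Since $X' \to Y$ is dominant, $\overline{f(X')} = Y$, and $\overline{f(Z')} = \overline{\{z\}} = Z$ by the first part of the argument. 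The fiber $X'_{f(z')} = X'_z$ has $Z'$ as (a neighborhood of its generic point lying in) a component, but since $Z'$ is a component of the whole fiber $f^{-1}(z)$, it is also a component of $X'_z$; hence $\dim Z'_{f(z')} = 0$. Finally, smoothness of $f$ forces the fiber $X_z$ to be smooth over $\kappa(z)$, hence equidimensional on each component of $X$, and more precisely $\dim X'_z = \reldim f$ along $X'$; but the point is simply that $\dim X'_{f(z')}$, the dimension of the fiber $X'_z$ at the generic point $z'$ of $Z'$, equals $\dim Z'_z = 0$ as well, because $Z'$ is an open-dense subset near $z'$ of a component of $X'_z$ through $z'$ and $z'$ is its generic point. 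Substituting, $\codim_{X'} Z' - \codim_Y Z = 0 - 0 = 0$, which is the claim.

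The main obstacle I anticipate is pinning down cleanly the claim that $z'$ maps to the generic point of $Z$ and that $Z'$, being a component of $f^{-1}(Z)$, is also a component of the single fiber $f^{-1}(z)$ — this is where the generization-lifting property of smooth (open, flat) morphisms is genuinely used, and it is worth stating carefully since the rest is bookkeeping with the dimension formula. One should also be slightly careful that "containing $Z'$" for the component $X'$ is automatically consistent: $z' \in X'$ forces $z'$ to be a generic point of the fiber $X'_z$ too, since the fiber's components through $z'$ are exactly those of $f^{-1}(z)$ lying in $X'$. Once these local facts about smooth morphisms are in hand, both equalities drop out of EGA IV 6.1.2 (or directly of Proposition \ref{prop:dim-formula}) applied to the relevant dominant morphism of irreducible universally catenary schemes.
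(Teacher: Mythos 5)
Your proof of the first equality is correct and is essentially the paper's argument in local form: the paper shrinks $X$ and $Y$ so that $f$ is surjective and $f^{-1}(Z)=Z'$ and then quotes Corollary 6.1.4 of \cite{ega42}, while you apply the local dimension formula for flat morphisms at the generic point $z'$. Your two preliminary claims --- that $f(z')=z$ and that $z'$ is a maximal point of the fiber $X_z$ --- are exactly the right things to isolate, and your justification via lifting of generizations is correct.

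The second part, however, misapplies Proposition \ref{prop:dim-formula}. Applied to $Z'\subseteq X'$ and $f|_{X'}:X'\to Y$, the formula reads
$$\codim_{X'}Z'-\codim_{\overline{f(X')}}\overline{f(Z')}=\dim X'_{f(\xi)}-\dim Z'_{f(z')},$$
where $\xi$ is the generic point of $X'$; the first fiber is the generic fiber of $X'\to Y$, not the fiber over $z$. More seriously, your evaluation of the right-hand side as $0-0$ is wrong. $Z'$ is an irreducible component of $f^{-1}(Z)$, not of $f^{-1}(z)$, and the fiber $Z'_{f(z')}=Z'\cap f^{-1}(z)$ is the closure of $z'$ in $X_z$, i.e.\ an irreducible component of the smooth $\kappa(z)$-scheme $X_z$; its dimension is $\reldim_{z'}f$, not $0$. (Take $X=\AA^1_Y$: then $Z'=\AA^1_Z$ and $Z'_z\cong\AA^1_{\kappa(z)}$ has dimension $1$. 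You are likely conflating $\dim Z'_{f(z')}$ with $\dim\mathcal{O}_{X_z,z'}=0$, which is the quantity that appears in your first part.) Similarly $\dim X'_{f(\xi)}=\reldim_{\xi}f$. The difference does vanish, but because both terms equal the relative dimension of $f$, and for that you need the observation --- which is exactly what the paper's proof invokes --- that the relative dimension of a smooth morphism is locally constant, hence constant on the connected set $X'$ containing both $\xi$ and $z'$. With that correction the argument closes and coincides with the paper's.
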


\begin{proof} Using openness of smooth morphisms, we may replace
$X$ and $Y$ by open subsets on which $f$ is surjective, and $f^{-1}(Z)=Z'$.
The first statement then follows from Corollary 6.1.4 of \cite{ega42}.

In the universally catenary case,
since the relative dimension of a smooth morphism is locally constant,
and smoothness also ensures that $X'$ dominates $Y$ and $Z'$ dominates
$Z$, this is an immediate application of the dimension formula,
Proposition \ref{prop:dim-formula}.
\end{proof}

\begin{proof}[Proof of Lemma \ref{lem:irred-codim}] 
Our first claim is that if $X'' \supseteq Z''$ are different 
choices of irreducible components of $X$ and $Z$ respectively, then
exists a scheme $Y$ with irreducible closed subschemes 
$\widetilde{Z}' \subseteq \widetilde{X}'$ such that $Y$
is a smooth cover of $X$ in two different ways, and under these maps,
$\widetilde{Z}'$ is an irreducible component of the preimage of $Z'$
and $Z''$ respectively, and $\widetilde{X}'$ is an irreducible component
of the preimage of $X'$ and $X''$ respectively. Indeed,
by considering induced mappings on generic points, we can find an
irreducible component $\widetilde{Z}$ of $Z \times_{\cX} Z$ which
dominates $Z'$ and $Z''$ respectively under the projection morphisms,
as well as an irreducible component $\widetilde{X}$ of
$X \times_{\cX} X$ which contains $\widetilde{Z}$ and likewise 
dominates $X'$ and $X''$ respectively under the projection morphisms.
Note also that
$$Z \times_{\cX} Z=Z \times_{\cX} X = Z \times_{X} (X \times_{\cX} X)
=X \times_{\cX} Z = (X\times_{\cX}X) \times_X Z,$$
so we have also that $\widetilde{Z}$ is simultaneously an irreducible
component of the preimage in $X \times_{\cX} X$ of $Z'$ via one projection,
and of $Z''$ via the other. Now, it may be that $X \times_{\cX} X$ is 
an algebraic space rather than a scheme, but if so we can pass to a further 
smooth cover $Y \to X \times_{\cX} X$ and let $\widetilde{X}'$ and 
$\widetilde{Z}'$ be components of the preimages of $\widetilde{X}$ and 
$\widetilde{Z}$. This proves the desired claim.

Now, for the first statement, it suffices to show that 
$$\codim_{X} Z' = \codim_{X} Z''.$$
Choosing $Y$ and $\widetilde{Z}'$ as above (with the choice of $X''$ and 
$\widetilde{X}'$ being irrelevant), we apply the first part of 
Proposition \ref{prop:irred-codim} to conclude that
$$\codim_{X} Z' = \codim_{Y} \widetilde{Z}'=\codim_{X} Z''.$$

For the second statement, it suffices to show that 
$$\codim_{X'} Z' = \codim_{X''} Z''.$$
Choosing $\widetilde{X}'$ and $\widetilde{Z}'$ as above,
we apply the second part of 
Proposition \ref{prop:irred-codim} to deduce that
$$\codim_{X'} Z' = \codim_{\widetilde{X}'} \widetilde{Z}' = \codim_{X''} Z'',$$
as desired.
\end{proof}

We now give the proofs of our main propositions.

\begin{proof}[Proof of Proposition \ref{prop:stack-codim}]
The first equality is an immediate consequence of the definitions of
codimension for subschemes and substacks. 
The second equality reduces immediately to the
case that $\cZ=\cZ'$, which is to say that we need to verify that,
when $\cZ$ is irreducible, we have
$$\codim_{\cX} \cZ = \max_{\cX' \subseteq \cX:\cZ \subseteq \cX'}
\codim_{\cX'} \cZ.$$
Letting $P:X \to \cX$ be a smooth presentation, the above equation gives
us that
$$\min_{Z \subseteq P^{-1}(\cZ)} \codim_X Z= 
\max_{\cX' \subseteq \cX:\cZ \subseteq \cX'} \min_{Z \subseteq
P^{-1}(\cZ)} \codim_{X'} Z,$$
where $X'=P^{-1}(\cX')$, and $Z$ ranges over irreducible components of
$P^{-1}(\cZ)$. However, according to Lemma \ref{lem:irred-codim}, the
values of $\codim_{X} Z$ and $\codim_{X'} Z$ are each independent of the
choice of $Z$, so the desired identity reduces to the standard identity
$$\codim_{X} Z=\max_{X' \subseteq X: Z \subseteq X'} \codim_{X'} Z.$$
\end{proof}

\begin{proof}[Proof of Proposition \ref{prop:stack-codim-add}]
The issue here is that even when everything is of finite type over
a field, the statement can fail without irreducibility hypotheses, and 
since irreducibility is not preserved under smooth covers, we again have
to exercise care in reducing to the scheme case. However, according to
Lemma \ref{lem:irred-codim} we can compute all the codimensions in question
in terms of a fixed chain of irreducible closed subschemes in a smooth
presentation of $\cX$, so the statement reduces to the usual statement for 
schemes, which is immediate from the definition of catenary.
\end{proof}

\begin{proof}[Proof of Proposition \ref{prop:dim-formula-stacks}] Let 
$P: Y\to\cY$ and $P':X \to \cX \times _{\cY} Y$ be a smooth presentation 
of $f$, with $f':X \to Y$ the induced morphism. By hypothesis, $f'$ is
locally of finite type, and $Y$ is universally catenary.
Let $Z$ and $Z'$ be the preimages in $X$ of $\cZ$ and $\cZ'$
respectively, and let $W'$ be an irreducible component of $Z'$,
and $W$ an irreducible component
of $Z$ containing $W'$. Then $\overline{f'(W)}$ and $\overline{f'(W')}$
are irreducible components of $P^{-1}(\overline{f(\cZ)})$ and
$P^{-1}(\overline{f(\cZ')})$, respectively.
It follows from Lemma \ref{lem:irred-codim} that we have
$$\codim_{\cZ} \cZ'=\codim_{W} W'\quad \text{ and } \quad 
\codim_{\overline{f(\cZ)}} \overline{f(\cZ')}=
\codim_{\overline{f'(W)}} \overline{f'(W')}.$$
On the other hand, the relative dimension of $P'$, being locally
constant, is the same at all points of $W$ and in particular on $W'$,
so we see that
$$\dim \cZ_{f(\eta)}- \dim \cZ'_{f(\eta')}
=\dim W_{f'(\xi)}-\dim W'_{f'(\xi')},$$
where $\xi$ and $\xi'$ are the generic points of $W$ and $W'$ respectively.
The proposition then follows from Proposition \ref{prop:dim-formula}.
\end{proof}

We conclude with examples illustrating the obstruction to defining a
notion of ``dimension of local ring'' for stacks, and the difference
between codimension and topological codimension for Artin stacks.

\begin{ex}\label{ex:a1-mod-gm} Let $k$ be a field, and $\cX$ the stack
given by $\AA^1_k/\Gm$. Then $\cX$ is smooth of dimension $0$ over
$\Spec k$. It has two points $x_1$ and $x_0$, with $x_1$ specializing to 
$x_0$, and via the smooth presentation $\AA^1_k \to \cX$, we see that 
$x_0$ has codimension $1$ in $\cX$. This example demonstrates that there
is no theory of ``dimension of local rings'' for algebraic stacks -- even
those of finite type over a field -- which simultaneously satisfies the
following three conditions:

\begin{ilist}
\itm If $\delta(x)$ denotes the ``dimension of the local ring at $x$,''
and $x_1$ specializes to $x_0$, then 
$$\delta(x_0)=\delta(x_1)+\codim_{\overline{x}_1}\overline{x}_0.$$
\itm If $\cX$ is smooth over a field, of pure dimension $n$, and $x$ is
a closed point of $\cX$, then $\delta(x)=n$.
\itm $\delta(x)$ can be computed on any open neighborhood of $x$.
\end{ilist}

Indeed, conditions (ii) and (iii) together imply that in our example,
we must have $\delta(x_0)=0=\delta(x_1)$, but this contradicts (i).
\end{ex}

\begin{ex}\label{ex:stack-codim} Identify $\AA^4_k$ with $2 \times 2$
matrices over $k$, and let $\GL_2(k)$ act by left multiplication. The
orbits of this action are in bijection with the kernels of the associated
linear map. Thus, if $\cX$ is the quotient stack $[\AA^4_k/\GL_4(k)]$,
we see that the point corresponding to the zero matrix has codimension
$2$ in the underlying topological space of $\cX$, but codimension $4$
in $\cX$ itself.
\end{ex}

\section{Relative dimension and stacks}\label{sec:rel-dim-stacks}

Finally, we generalize our results on relative dimension of morphisms 
to the setting of algebraic stacks. 
First, following the standard procedures, we will obtain from
Corollary \ref{cor:rel-dim-smooth-local} that our definition of relative
dimension generalizes to stacks.

\begin{defn}\label{def:rel-dim-stacks} Suppose that $\cX,\cY$ are algebraic
stacks, locally of finite type over a universally catenary base scheme
$S$, and $f:\cX \to \cY$ a morphism. We say that $f$ has
\textbf{universal relative dimension at least $n$} if there exists a
smooth presentation $P:Y \to \cY$ and $P':X \to \cX \times_{\cY} Y$ of 
$f$ such that for all $x \in X$, the induced morphism $X \to Y$ has 
universal relative dimension at least $n+\reldim_x P'$ at $x$.
\end{defn}

The following proposition says in essence that universal relative 
dimension is well defined for stacks.

\begin{prop}\label{prop:rel-dim-stacks-defd} Suppose that $\cX,\cY$ are 
algebraic
stacks, locally of finite type over a universally catenary base scheme
$S$, and $f:\cX \to \cY$ a morphism of universal relative dimension at least
$n$. Then for all
smooth presentations $P:Y \to \cY$ and $P':X \to \cX \times_{\cY} Y$, we 
have that for all $x \in X$, the induced morphism $X \to Y$ has universal 
relative dimension at least $n+\reldim_x P'$ at $x$.
\end{prop}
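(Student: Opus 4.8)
The plan is to argue by the standard ``change of smooth presentation'' technique: I will show that the defining condition of Definition~\ref{def:rel-dim-stacks} is unchanged under refining a presentation, and that any two presentations of $f$ admit a common refinement. Call a smooth presentation $(Q\colon Y'\to\cY,\ Q'\colon X'\to\cX\times_{\cY}Y')$ of $f$ a \emph{refinement} of $(P\colon Y\to\cY,\ P'\colon X\to\cX\times_{\cY}Y)$ if there is a smooth surjection $q\colon Y'\to Y$ over $\cY$ and a smooth surjection $h\colon X'\to X\times_{Y}Y'$ whose composite with the projection $X\times_{Y}Y'\to\cX\times_{\cY}Y'$ equals $Q'$; here $X\times_Y Y'\to\cX\times_{\cY}Y'$ is canonically the base change of $P'$ along the smooth cover $\cX\times_{\cY}Y'\to\cX\times_{\cY}Y$, and $Y,Y'$ are universally catenary since they are locally of finite type over $S$. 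Writing $g\colon X\to Y$ and $g'\colon X'\to Y'$ for the morphisms induced by the two presentations, one has $g'=$ the composite $X'\to X\times_Y Y'\to Y'$, where $X\times_Y Y'\to Y'$ is the base change of $g$ along $q$; thus $g'$ is obtained from $g$ exactly in the situation of Corollary~\ref{cor:rel-dim-smooth-local}.

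Applying Corollary~\ref{cor:rel-dim-smooth-local} to $g$, the cover $q$, and the cover $h$, one obtains: for each $x\in X$ and each $x'\in X'$ lying over $x$, the morphism $g$ has universal relative dimension at least $n+\reldim_x P'$ at $x$ if and only if $g'$ has universal relative dimension at least $n+\reldim_x P'+\reldim_{x'}h$ at $x'$. Since relative dimension is preserved under base change, $\reldim_{x'}Q'=\reldim_{x'}h+\reldim_x P'$, so the right-hand bound is exactly $n+\reldim_{x'}Q'$. As $h$ and the projection $X\times_Y Y'\to X$ are smooth and surjective, every point of $X$ is hit and every point of $X'$ lies over a point of $X$; hence the defining condition of Definition~\ref{def:rel-dim-stacks} holds for $(P,P')$ if and only if it holds for the refinement $(Q,Q')$.

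It remains to build a common refinement of two given presentations $(P_1,P_1')$ and $(P_2,P_2')$ of $f$. First let $Y_3$ be a scheme admitting a smooth surjection onto $Y_1\times_{\cY}Y_2$ (a priori only an algebraic space); its two projections to the $Y_i$ are base changes of $P_2$ and $P_1$, hence smooth surjective, so $Y_3\to\cY$ is a smooth presentation of $\cY$ covering both $Y_1$ and $Y_2$ over $\cY$. Inside $\cX\times_{\cY}Y_3$ form
$$W:=(X_1\times_{Y_1}Y_3)\times_{\cX\times_{\cY}Y_3}(X_2\times_{Y_2}Y_3),$$
which maps smoothly and surjectively onto each $X_i\times_{Y_i}Y_3$ (the corresponding structure map being a base change of $P_1'$ or $P_2'$), and let $X_3$ be a scheme admitting a smooth surjection onto $W$. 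Then $X_3\to\cX\times_{\cY}Y_3$ is a smooth presentation of $f$ and $X_3\to X_i\times_{Y_i}Y_3$ is a smooth surjection whose composite with $X_i\times_{Y_i}Y_3\to\cX\times_{\cY}Y_3$ recovers it, so $(Y_3\to\cY,\ X_3\to\cX\times_{\cY}Y_3)$ refines both $(P_1,P_1')$ and $(P_2,P_2')$. Combining with the previous paragraph: starting from a presentation witnessing that $f$ has universal relative dimension at least $n$, the condition passes up to the common refinement of that presentation with an arbitrary $(P_2,P_2')$ and then back down to $(P_2,P_2')$, which is exactly the asserted conclusion.

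I expect the only points requiring genuine care to be the relative-dimension bookkeeping --- checking that refining a presentation shifts the required bound by precisely $\reldim h$, and that this shift is cancelled exactly by the change in $\reldim P'$ --- together with the routine nuisance that fiber products of schemes over stacks may only be algebraic spaces, necessitating one further smooth cover at each step (handled just as in the proofs of Proposition~\ref{prop:codim-defd} and Lemma~\ref{lem:irred-codim}).
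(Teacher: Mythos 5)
Your proposal is correct and follows essentially the same route as the paper: form a common refinement of the two presentations (a smooth cover of $Y_1\times_{\cY}Y_2$ together with a smooth cover of the induced fiber product of the $X_i$'s over $\cX\times_{\cY}Y_3$, which is canonically the paper's $(X\times_{\cX}X')\times_{Y\times_{\cY}Y'}Y''$), and apply Corollary~\ref{cor:rel-dim-smooth-local} once in each direction. Your explicit bookkeeping $\reldim_{x'}Q'=\reldim_{x'}h+\reldim_xP'$ is exactly the point the paper's sketch leaves implicit, and it is verified correctly.
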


The proof of this proposition is completely standard, and underlies
the fundamental Definition 4.14 of \cite{l-m-b}. However, since the
proof is omitted in \textit{loc. cit.}, we sketch it here for the
convenience of the reader.

\begin{proof} Suppose that $P$ and $P'$ satisfy the condition of
Definition \ref{def:rel-dim-stacks}, and we are given $Y' \to \cY$ and 
$X' \to \cX \times_{\cY} Y'$ yielding another smooth presentation of $f$.
Let $Y'' \to Y \times_{\cY} Y'$  and 
$X'' \to (X \times_{\cX} X') \times_{Y \times_{\cY} Y'} Y''$
be smooth presentations of the relevant algebraic spaces. Then the
morphism $X'' \to Y''$ factors through both 
$X \times_Y Y'' = (X \times_{\cY} Y') \times_{Y \times_{\cY} Y'} Y''$
and
$X' \times_{Y'} Y'' = (Y \times_{\cY} X') \times_{Y \times_{\cY} Y'} Y''$.
$$\begin{tikzcd}[column sep=small]
{} & X'' \arrow{d} & {} \\
{} & (X \times_{\cX} X')_{Y \times_{\cY} Y'} Y'' \arrow{dr}\arrow{dl} & {} \\
X \times_Y Y'' \arrow{dr} & {} & X' \times_{Y'} Y'' \arrow{dl} \\
{} & Y'' & {}
\end{tikzcd}$$
Moreover, because $X \times_{\cY} Y' = X \times_{\cX} (\cX \times_{\cY} Y')$,
we see that in fact $X''$ is a smooth cover of $X \times_Y Y''$,
and similarly of $X' \times_{Y'} Y''$.
Applying Corollary \ref{cor:rel-dim-smooth-local} twice, we conclude
that the hypothesis on the universal relative dimension of $X \to Y$
implies the desired statement on $X' \to Y'$.
\end{proof}

We next use the properties of stack codimension which we have developed
to verify that universal relative dimension for stacks could be defined 
without reference to smooth presentations.

\begin{prop}\label{prop:rel-dim-stacks-agree} Suppose that $\cX,\cY$ are 
algebraic stacks, locally of finite type over a universally catenary base 
scheme $S$, and $f:\cX \to \cY$ a morphism. Then $f$ has universal 
relative dimension at least $n$ if and only if for every irreducible
algebraic stack $\cZ$ over $\cY$, locally of finite type over a universally 
catenary scheme (possibly distinct from $S$), and every irreducible component 
$\cZ'$ of $\cX \times_{\cY} \cZ$, we have
$$\dim \cZ'_{f'(\eta)} - \codim_{\cZ} \overline{f'(\cZ')} \geq n,$$
where $\eta$ is the generic point of $\cZ'$, and 
$f':\cX \times_{\cY} \cZ \to \cZ$ is the second projection.
\end{prop}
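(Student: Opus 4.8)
The plan is to reduce both directions to the scheme-theoretic Definition \ref{def:rel-dim} by passing to smooth presentations, leaning on Proposition \ref{prop:rel-dim-stacks-defd} (so (1) may be tested on an arbitrary presentation), Proposition \ref{prop:rel-dim-smooth-precomp} and Corollary \ref{cor:rel-dim-smooth-local} (to relate $f$ to its presentations and handle composed smooth covers), Lemma \ref{lem:irred-codim} (to compare the codimension terms), and Proposition \ref{prop:fiber-dim-stacks} (to compare the fibre-dimension terms). Fix once and for all a smooth presentation $P : Y \to \cY$, $P' : X \to \cX \times_{\cY} Y$ of $f$, with induced morphism $g : X \to Y$; by Proposition \ref{prop:rel-dim-stacks-defd}, condition (1) amounts to $g$ having universal relative dimension at least $n + \reldim_x P'$ at every $x \in X$. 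It is worth stressing at the outset that the \emph{universality} is essential: the test stacks $\cZ$ in (2) may be locally of finite type over any universally catenary base, not just over $S$, so the argument has to travel through base changes of $g$ rather than through $g$ alone.

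For (2) $\Rightarrow$ (1) I would argue directly. Fix $x$ and shrink $X$ to a neighbourhood on which $\reldim P'$ is the constant $m$; it suffices to show that then $g$ has universal relative dimension at least $n+m$. Let $T$ be a universally catenary $Y$-scheme and $T' \subseteq T$ an irreducible closed subscheme; then $(X \times_Y T)|_{T'} = X \times_Y T'$ is a scheme, and for any of its irreducible components $W'$, with generic point $w$, we apply (2) to the irreducible algebraic stack $T'$ over $\cY$: since $X \times_Y T' \to \cX \times_{\cY} T' = (\cX \times_{\cY} Y) \times_Y T'$ is a base change of $P'$, hence smooth surjective of relative dimension $m$ at $w$, the component $W'$ lies over an irreducible component $\cW'$ of $\cX \times_{\cY} T'$, with the fibre dimension over the generic point of $\cW'$ increasing by $m$ and with $\overline{g(W')} = \overline{f'(\cW')}$ in $T'$ (smooth covers carry generic points to generic points). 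The inequality of (2) for $\cW'$ thus yields $\dim W'_{g(w)} - \codim_{T'}\overline{g(W')} \geq n+m$, which is precisely what Definition \ref{def:rel-dim} requires of $X \times_Y T \to T$. As $T$ was arbitrary, $g$ has universal relative dimension at least $n+m$ near $x$, so (1) holds.

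For (1) $\Rightarrow$ (2), take an irreducible algebraic stack $\cZ$ over $\cY$, locally of finite type over a universally catenary scheme, and an irreducible component $\cZ' \subseteq \cX \times_{\cY}\cZ$ with generic point $\eta$; write $f' : \cX\times_{\cY}\cZ \to \cZ$. Choose a smooth presentation $Z \to \cZ$ by a scheme, arranged (by instead presenting the algebraic space $Y \times_{\cY}\cZ$) to come with a morphism $Z \to Y$; then $Z$ is universally catenary, and taking $W := X \times_Y Z$ --- a scheme, since $X, Y, Z$ are --- one has $W \to \cX\times_{\cY}Z = (\cX\times_{\cY}\cZ)\times_{\cZ}Z$ smooth surjective (a base change of $P'$), so that $(Z \to \cZ,\, W \to (\cX\times_{\cY}\cZ)\times_{\cZ}Z)$ is a smooth presentation of $f'$, whose induced morphism $g_W : W \to Z$ is the base change of $g$ along $Z \to Y$. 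By (1) and base change (and Proposition \ref{prop:rel-dim-opens} to glue the local statements), $g_W$ has universal relative dimension at least $n + \reldim_w(W \to \cX\times_{\cY}Z)$ at every $w \in W$. To extract the inequality of (2), pick an irreducible component $Z_0$ of $Z$ and an irreducible component $W_0$ of $W|_{Z_0}$ lying over $\cZ'$, arranged so that its generic point $w$ maps to $\eta$, so that $W_0$ is simultaneously an irreducible component of the preimage of $\cZ'$ in $W$, and so that $\overline{g_W(W_0)}$ is an irreducible component of the preimage in $Z$ of $\overline{f'(\cZ')}$; such $Z_0$ and $W_0$ exist because smooth morphisms lift generizations. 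Then Lemma \ref{lem:irred-codim} gives $\codim_{\cZ}\overline{f'(\cZ')} = \codim_{Z_0}\overline{g_W(W_0)}$, Proposition \ref{prop:fiber-dim-stacks} gives $\dim\cZ'_{f'(\eta)} = \dim (W_0)_{g_W(w)} - \reldim_w(W \to \cX\times_{\cY}Z)$, and Definition \ref{def:rel-dim} for $g_W$ applied with base $Z_0$ and component $W_0$ gives $\dim (W_0)_{g_W(w)} - \codim_{Z_0}\overline{g_W(W_0)} \geq n + \reldim_w(W \to \cX\times_{\cY}Z)$; combining the three yields $\dim\cZ'_{f'(\eta)} - \codim_{\cZ}\overline{f'(\cZ')} \geq n$.

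The main obstacle is exactly this selection of $Z_0$ and $W_0$ and the verification that the three cited identities and inequalities then line up: one must chase a single generic point through the isomorphism $\cX\times_{\cY}Z = (\cX\times_{\cY}\cZ)\times_{\cZ}Z$ and through the smooth cover $W \to \cX\times_{\cY}Z$, precisely in the manner of the proof of Lemma \ref{lem:irred-codim}, while keeping in mind that $Z$, being a smooth cover of the irreducible stack $\cZ$, may well be reducible --- which is why the codimension comparison must be routed through Lemma \ref{lem:irred-codim} rather than being a tautology. A further, harmless nuisance is that $\reldim P'$ and the relative dimensions of the intervening covers need not be constant, so one systematically works over opens on which they are; this is legitimate because every property in play is local on the source (Proposition \ref{prop:rel-dim-opens}).
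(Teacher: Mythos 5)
Your argument is correct and matches the paper's proof in all essentials: both directions pass through the auxiliary scheme $X \times_Y Z$ with $Z$ a smooth cover of $\cZ \times_{\cY} Y$, compare the codimension terms via Lemma \ref{lem:irred-codim}, and compare the fibre-dimension terms using that $X\times_Y Z \to \cX\times_{\cY}\cZ$ is built from smooth covers of known relative dimension. The only cosmetic differences are that you route the fibre-dimension comparison through Proposition \ref{prop:fiber-dim-stacks} rather than through an explicitly chosen intermediate component of $\cX\times_{\cY} Z$ as the paper does, and that $Y\times_{\cY}\cZ$ is in general an algebraic stack rather than an algebraic space (harmless, since all you need is a scheme smoothly covering it and mapping to $Y$).
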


\begin{proof} Fix $P:Y \to \cY$ and $P':X \to \cX \times_{\cY} Y$ a
smooth presentation of $f$.

First suppose that $f$ has universal
relative dimension at least $n$, and let $\cZ \to \cY$ be as in the
statement of the proposition. Fix a smooth cover $Z \to \cZ \times_{\cY} Y$,
and let $g:X \times_Y Z \to Z$ be the second projection.
We observe that $X \times_Y Z$ is a smooth cover of $\cX \times_{\cY} \cZ$:
indeed, we obtain this as the composition
$$X \times_Y Z \to (\cX \times_{\cY} Y) \times_Y Z = \cX \times_{\cY} Z
\to \cX \times_{\cY} \cZ.$$
Accordingly, we can let $Z' \subseteq X \times_Y Z$ be an irreducible
component of the preimage of $\cZ' \subseteq \cX \times_{\cY} \cZ$, and
$Z_0$ an irreducible component of $Z$ containing the image of $g(Z')$. 
$$\begin{tikzcd} 
Z' \arrow{rr} \arrow[hook]{dr} \arrow{dd} & {} & \cZ'\arrow[hook]{d} \\
{}& X \times_Y Z \arrow{r} \arrow{d}{g} & \cX \times_{\cY} \cZ 
\arrow{d}{f'} \\
Z_0 \arrow[hook]{r} &  Z \arrow{r} & \cZ
\end{tikzcd}$$
Then $Z'$ is an irreducible component of $X \times_Y Z$, and hence also 
of the preimage of $Z_0$ in $X \times_Y Z$. Let 
$\eta'$ be the generic point of $Z'$,
and $\xi$ its image in $X$. Then by hypothesis, we have
$$\dim Z'_{g(\eta')}-\codim_{Z_0} \overline{g(Z')} \geq n+\reldim_{\xi} P'.$$

We claim that we have
$$\dim Z'_{g(\eta')}=\dim \cZ'_{f'(\eta)}+\reldim_{\xi} P', \text{ and }
\codim_{Z_0} \overline{g(Z')} = \codim_{\cZ} \overline{f'(\cZ')};$$
together, these claims yield the desired inequality. For the latter claim,
according to Lemma \ref{lem:irred-codim}, it is enough to show that
$\overline{g(Z')}$ is an irreducible component of the preimage of
$\overline{f'(\cZ')}$. But because $X \times_Y Z \to \cX \times_{\cY} \cZ$
is smooth, we have that $Z'$ dominates $\cZ'$, so this is clear.
Similarly, if we let $\cZ''$ be an irreducible component of 
$\cX \times_{\cY} Z$ contained in the preimage of $\cZ'$ and containing the
image of $Z'$, then we see that $\dim \cZ'_{f'(\eta)}=\dim \cZ''_{g(\eta')}$
since the latter is (an irreducible component of) a base extension of the 
former, and $\dim Z'_{g(\eta')}=\dim \cZ''_{g(\eta')}+\reldim_{\xi} P'$,
since $Z'$ is (an irreducible component of) a smooth cover of $\cZ''$,
obtained as a base change of $P'$.
We thus conclude one direction of the proposition.

For the converse, given $x \in X$, restricting to the connected component
of $X$ containing $x$, we may assume that $X \to \cX \times_{\cY} Y$ has
constant relative dimension $m$, and we want to prove that $X \to Y$ has
universal relative dimension $n+m$. Suppose we have $Z \to Y$, with $Z$
irreducible and universally catenary, and let $Z'$ be an irreducible
component of $X \times_Y Z$, with generic point $\eta$; if 
$g:X \times_Y Z \to Z$ is the second projection, we wish to show that
$$\dim Z'_{g(\eta)}-\codim_{Z} \overline{g(Z')} \geq n+m.$$
Now, let $\cZ'$ be an irreducible component of $\cX \times_{\cY} Z$
containing the image of $Z'$ under the induced map 
$X\times_Y Z \to \cX \times_{\cY} \cZ$, and let $\eta'$ be the generic
point of $\cZ'$. By hypothesis, we have
$$\dim \cZ'_{g'(\eta')}-\codim_{Z} \overline{g'(\cZ')} \geq n,$$
where $g':\cX \times_{\cY} Z \to Z$ is the second projection.
We now observe that 
$$X \times_Y Z=X \times_{\cX \times_{\cY} Y} \cX \times_{\cY} Z,$$
so $Z'$ is an irreducible component of a scheme smooth of relative 
dimension $m$ over $\cZ'$, and in particular also dominates $\cZ'$.
The desired inequality follows.
\end{proof}

We can now state the main application of relative dimension of morphisms
in the context of stacks. Recall that we have defined the notions of
reducedness of fibers and of the fiber dimension function $\delta_x f$
in Definitions \ref{def:stack-reduced-fibers} and \ref{def:fiber-dim-stacks}.

\begin{cor}\label{cor:rel-dim-smoothing-stacks} 
If $f:\cX \to \cY$ has universal relative dimension
at least $n$, then $\delta_x f \geq n$ for all $x \in \cX$.
If for some $x \in \cX$, we have $\delta_x f=n$,
then there exists a neighborhood $\cU$ of $x$ with $\delta_{x'} f=n$ for
all $x' \in \cU$, and on any such neighborhood we have that $f$ is
universally open.

If further $\cY$ is reduced and $f$ is reduced
at $x$, then $f$ is flat at $x$.
\end{cor}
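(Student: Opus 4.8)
The plan is to reduce the three assertions (fiber dimension bound, local constancy plus universal openness, and flatness) to the corresponding statements on schemes, which are already packaged in Proposition~\ref{prop:rel-dim-smoothing}. Fix a smooth presentation $P:Y \to \cY$ and $P':X \to \cX \times_{\cY} Y$ of $f$, inducing $g:X \to Y$, and recall from Definition~\ref{def:rel-dim-stacks} and Proposition~\ref{prop:rel-dim-stacks-defd} that $g$ has universal relative dimension at least $n + \reldim_{\widetilde{x}} P'$ at every $\widetilde{x} \in X$. By Proposition~\ref{prop:fiber-dim-stacks}, for $\widetilde{x}$ lying over $x$ (with $\widetilde{y} = g(\widetilde{x})$) we have $\delta_x f = \dim_{\widetilde{x}} X_{\widetilde{y}} - \reldim_{\widetilde{x}} P'$. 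The first assertion is then immediate: Remark~(ii) following Definition~\ref{def:rel-dim} (or Proposition~\ref{prop:rel-dim-prelim}) gives $\dim_{\widetilde{x}} X_{\widetilde{y}} \geq n + \reldim_{\widetilde{x}} P'$, hence $\delta_x f \geq n$.

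For the second assertion, suppose $\delta_x f = n$ for some $x$. Pick $\widetilde{x}$ over $x$ as above; then $\dim_{\widetilde{x}} X_{\widetilde{y}} = n + \reldim_{\widetilde{x}} P' =: m$, so $g$ has universal relative dimension at least $m$ at $\widetilde{x}$ and fiber dimension exactly $m$ there. Proposition~\ref{prop:rel-dim-smoothing} produces a neighborhood $V$ of $\widetilde{x}$ in $X$ on which $g$ has pure fiber dimension $m$, has relative dimension at least $m$, and is universally open. Since $P'$ has locally constant relative dimension, on $V$ (shrunk if necessary so that $\reldim P'$ is constant) the formula $\delta_{x'} f = \dim_{\widetilde{x}'} X_{g(\widetilde{x}')} - \reldim_{\widetilde{x}'} P'$ shows $\delta_{x'} f = n$ for all $x' $ in the image $\cU$ of $V$ in $\cX$; because $P$ and the smooth cover $X \to \cX \times_{\cY} Y$ are open, $\cU$ is an open neighborhood of $x$, and by upper semicontinuity of $\delta$ (Proposition~\ref{prop:fiber-dim-stacks}) we may take $\cU$ to be the full locus where $\delta_{x'} f = n$. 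For universal openness on any such $\cU$, note that universal openness of a morphism of stacks is checked by pulling back along a smooth presentation: the pullback of $f|_{\cU}$ along $P$ is, up to the smooth (hence universally open) cover $X \to \cX \times_{\cY} Y$, the morphism $g$ restricted to the preimage of $\cU$, and this preimage is a union of opens on which $g$ is universally open by the argument just given (here we use the ``converse'' direction of Proposition~\ref{prop:rel-dim-opens} together with the fact that universal openness descends along smooth covers, \cite{ega43} Cor.~8.10.2 combined with openness of smooth maps).

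For flatness, assume in addition that $\cY$ is reduced and $f$ is reduced at $x$. Reducedness of $\cY$ means $Y$ is reduced (smoothness preserves reducedness of the source only when the base is reduced, but here $Y \to \cY$ smooth with $\cY$ reduced does give $Y$ reduced after shrinking — more precisely, reduced stacks have reduced smooth presentations by definition of the reduced structure). By Definition~\ref{def:stack-reduced-fibers} and Proposition~\ref{prop:reduced-fibers-stacks-defnd}, $f$ reduced at $x$ means the geometric fiber of $\cX \times_{\cY} Y \to Y$ over $\widetilde{y}$ is reduced at the relevant point; pulling back the smooth $P'$, the geometric fiber of $g$ over $\widetilde{y}$ is reduced at $\widetilde{x}$, i.e. $g$ is reduced at $\widetilde{x}$ in the scheme sense. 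Now the last clause of Proposition~\ref{prop:rel-dim-smoothing} applies to $g$ at $\widetilde{x}$, giving that $g$ is flat at $\widetilde{x}$. Flatness of $f$ at $x$ is then read off: $f$ is flat at $x$ iff the base change $\cX \times_{\cY} Y \to Y$ is flat at a point over $x$, iff (since $P'$ is smooth, hence flat) the composite $g$ is flat at $\widetilde{x}$.

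The main obstacle I expect is the bookkeeping around universal openness and flatness for \emph{stacks} rather than schemes — specifically, pinning down that these properties of $f$ are equivalent to the corresponding properties of $g$ via the smooth presentation, and that ``$\cY$ reduced'' and ``$f$ reduced at $x$'' translate cleanly to ``$Y$ reduced'' and ``$g$ reduced at $\widetilde{x}$''. The dimension-theoretic core is essentially a one-line invocation of Proposition~\ref{prop:rel-dim-smoothing}; the work is entirely in the descent/ascent statements for smooth morphisms, all of which are standard but need to be cited carefully.
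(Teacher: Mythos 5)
Your proof is correct and follows essentially the same route as the paper's: reduce to Proposition \ref{prop:rel-dim-smoothing} via a smooth presentation, use Propositions \ref{prop:rel-dim-prelim} and \ref{prop:fiber-dim-stacks} for the fiber-dimension bound and the neighborhood, and transfer universal openness and flatness along the presentation. The paper's own proof is just a terser version of this, leaving the smooth-descent bookkeeping implicit; the only quibble is that Corollaire 8.10.2 of \cite{ega43} concerns checking universal openness after finite type base change rather than descent along smooth covers, but the descent statement you actually need there is standard.
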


\begin{proof} It is immediate from the definitions and from Proposition
\ref{prop:rel-dim-prelim} that $\delta_x f \geq n$ for all $x \in \cX$,
and it then follows from Proposition \ref{prop:fiber-dim-stacks} that if 
$\delta_x f =n$, there exists an open neighborhood $\cU$ with
$\delta_{x'} f = n$ for all $x' \in \cU$. The universal openness and
flatness assertions then follow from the corresponding statements of
Proposition \ref{prop:rel-dim-smoothing}.
\end{proof}

\begin{cor}\label{cor:rel-dim-smoothing-2-stacks} Suppose that $\cY$ is 
irreducible, and there exist $x \in \cX$
and $\cY' \subseteq \cY$ closed and irreducible containing $f(x)$ and
with support strictly smaller than $\cY$, such that 
\begin{Ilist}
\itm $f$ has universal relative dimension at least $n$ at $x$;
\itm every irreducible component $\cX'$ of $f^{-1}(\cY')$ containing $x$ has
$$\dim \cX'_{f(\eta)} - \codim_{\cY'} \overline{f(\cX')} = n,$$
where $\eta$ is the generic point of $\cX'$. 
\end{Ilist}
Then for every irreducible component $\cX''$ of $\cX$ containing 
$x$, we have
$$f(\cX'') \not\subseteq \cY'.$$
If further we have
\begin{Ilist}[2]
\itm $\cY'$ has codimension $c$ in $\cY$, and the inclusion 
$\cY' \hookrightarrow \cY$ has universal relative dimension at least $-c$,
\end{Ilist}
then
$$\dim \cX''_{f(\eta')} - \codim_{\cY} \overline{f(\cX'')} = n,$$
where $\eta'$ is the generic point of $\cX''$.
\end{cor}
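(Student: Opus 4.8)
The strategy is to reduce Corollary \ref{cor:rel-dim-smoothing-2-stacks} to its scheme-theoretic counterpart, Proposition \ref{prop:rel-dim-smoothing-2}, by passing to a smooth presentation and translating each of the three hypotheses and both conclusions across the presentation using the stack codimension machinery developed in \S\ref{sec:stack-codim}, together with Proposition \ref{prop:rel-dim-stacks-agree} which gives the intrinsic characterization of universal relative dimension for stacks. First I would fix a smooth presentation $P:Y \to \cY$ and $P':X \to \cX \times_{\cY} Y$ of $f$, with induced morphism $g:X \to Y$; since $\cY$ is irreducible we may, after restricting to a connected component of $Y$, assume $Y$ is irreducible, and after further restriction assume $P'$ has constant relative dimension $m$ near a chosen preimage of $x$. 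Pick $\widetilde{x} \in X$ over $x$. By Definition \ref{def:rel-dim-stacks} and Proposition \ref{prop:rel-dim-stacks-defd}, hypothesis (I) translates to: $g$ has universal relative dimension at least $n+m$ at $\widetilde{x}$. Let $Y' = P^{-1}(\cY')$; this is closed in $Y$ but possibly reducible, so I would choose an irreducible component $Y'_0$ of $Y'$ containing the image of $\widetilde x$ and with support strictly smaller than $Y$ (this is possible since $\cY'$ has support strictly smaller than $\cY$, and $P$ maps generic points to generic points). One should also confirm that $Y'_0$ maps onto $\cY'$ — this holds because smooth morphisms lift generizations, so every component of $P^{-1}(\cY')$ dominating a chosen component of $\cY'$ does the job, and $\cY'$ is irreducible.

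The next step is to translate hypothesis (II). An irreducible component $X'$ of $f^{-1}(\cY')$ containing $x$ pulls back, under the smooth cover $X \times_Y Y'_0 \to \cX \times_{\cY} \cY'$ (restricted appropriately), to irreducible components $W$ of $g^{-1}(Y'_0)$; conversely each such $W$ containing $\widetilde x$ dominates such an $X'$. Using Lemma \ref{lem:irred-codim} to identify $\codim_{Y'_0}\overline{g(W)}$ with $\codim_{\cY'}\overline{f(X')}$ (via the fact that $\overline{g(W)}$ is a component of the preimage of $\overline{f(X')}$, since $W$ dominates $X'$), and using that the fiber dimension picks up exactly the constant $m = \reldim P'$, hypothesis (II) becomes: every irreducible component $W$ of $g^{-1}(Y'_0)$ containing $\widetilde x$ satisfies $\dim W_{g(\zeta)} - \codim_{Y'_0}\overline{g(W)} = n+m$, where $\zeta$ is the generic point of $W$. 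This is precisely hypothesis (II) of Proposition \ref{prop:rel-dim-smoothing-2} for $g$, $Y'_0$, with the shifted value $n+m$. Similarly, if hypothesis (III) holds for $\cY'\hookrightarrow \cY$, then the codimension $c$ of $\cY'$ in $\cY$ equals the codimension of $Y'_0$ in $Y$ by definition of stack codimension (and Lemma \ref{lem:irred-codim}), and the inclusion $Y'_0 \hookrightarrow Y$ inherits universal relative dimension at least $-c$ — here I would invoke Proposition \ref{prop:rel-dim-stacks-agree} applied to the morphism $\cY' \hookrightarrow \cY$ with test stack being the scheme $Y$, together with the translation of relative dimension across the smooth cover $Y \to \cY$ of the target, which is Corollary \ref{cor:rel-dim-smooth-local}. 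So hypothesis (III) for $Y'_0 \hookrightarrow Y$ holds.

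Now I apply Proposition \ref{prop:rel-dim-smoothing-2} to $g:X\to Y$, the irreducible $Y$, the point $\widetilde x$, and $Y'_0$, with $n$ replaced by $n+m$. The first conclusion gives: for every irreducible component $X''_0$ of $X$ containing $\widetilde x$, $g(X''_0)\not\subseteq Y'_0$. Since the components of $X$ containing $\widetilde x$ are exactly the preimages (under the smooth cover $P'$, restricted) of the components $\cX''$ of $\cX$ containing $x$, and since $g(X''_0)\subseteq Y'_0$ would force (via the commuting square relating $g$, $P$, $P'$, $f$) $f(\cX'')\subseteq \cY'$, we obtain $f(\cX'')\not\subseteq\cY'$ for all such $\cX''$. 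For the final conclusion, Proposition \ref{prop:rel-dim-smoothing-2} yields $\dim (X''_0)_{g(\zeta')} - \codim_Y \overline{g(X''_0)} = n+m$ for the generic point $\zeta'$ of $X''_0$; translating back exactly as in the previous paragraph — $\codim_Y\overline{g(X''_0)} = \codim_{\cY}\overline{f(\cX'')}$ by Lemma \ref{lem:irred-codim}, and $\dim(X''_0)_{g(\zeta')} = \dim \cX''_{f(\eta')} + m$ — gives $\dim\cX''_{f(\eta')} - \codim_{\cY}\overline{f(\cX'')} = n$, as desired.

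The main obstacle I anticipate is bookkeeping the passage between components of $\cX$, of $\cX\times_{\cY}\cZ$-type pullbacks, and of the scheme $X$, and likewise for the target: one must check repeatedly that the relevant objects on $X$ and $Y$ really are \emph{irreducible components} (not just irreducible closed subschemes) of the appropriate preimages, so that Lemma \ref{lem:irred-codim} and the dimension bookkeeping apply. This is exactly the kind of subtlety flagged in \S\ref{sec:stack-codim}: smooth covers break irreducible stacks into reducible schemes, so "the" component $Y'_0$ must be chosen with care and its domination of $\cY'$ verified. The other point requiring attention is consistency of the constant $m=\reldim_{\widetilde x} P'$ across all the components $X''_0$ of $X$ through $\widetilde x$ — this is handled by restricting to the connected component of $X$ at the outset, since $\reldim P'$ is locally constant. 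Once these are in place, everything else is a mechanical application of the cited results.
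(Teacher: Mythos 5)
Your overall strategy --- reducing to Proposition \ref{prop:rel-dim-smoothing-2} by passing to a smooth presentation and translating hypotheses across it --- is not the route the paper takes, and as written it has a genuine gap at the very first step. You assert that ``since $\cY$ is irreducible we may, after restricting to a connected component of $Y$, assume $Y$ is irreducible.'' This is false: connected components of smooth (even \'etale) covers of irreducible objects need not be irreducible, which is precisely the phenomenon the paper emphasizes in the introduction and in \S\ref{sec:stack-codim} (for instance, the connected \'etale double cover of a nodal cubic by a cycle of two rational curves). Worse, points lying on several components of such a cover need not admit any irreducible open neighborhood, so no further open restriction repairs this. Since Proposition \ref{prop:rel-dim-smoothing-2} is stated only for an irreducible target, and its proof genuinely uses irreducibility of the target (both to conclude $\codim_Y Y'>0$ and to apply additivity of codimension along the chain $\overline{f(X')} \subseteq Y' \subseteq Y$), your reduction does not apply as stated. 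A second, related problem: your chosen component $Y'_0$ of $P^{-1}(\cY')$ is in general not open in $P^{-1}(\cY')$, so $Y'_0 \to \cY'$ and $X \times_Y Y'_0 \to \cX \times_{\cY} \cY'$ need not be smooth; this undercuts the asserted correspondence between components of $g^{-1}(Y'_0)$ through $\widetilde{x}$ and components of $f^{-1}(\cY')$ through $x$, and the accompanying identifications of fiber dimensions and codimensions.

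These difficulties are exactly why the paper proves Proposition \ref{prop:rel-dim-stacks-agree} first: that result lets one test the defining inequality of universal relative dimension directly against arbitrary irreducible stacks over $\cY$, so the argument of Proposition \ref{prop:rel-dim-smoothing-2} can be repeated verbatim with the irreducible stacks $\cX''$, $\cY'$, $\cY$ themselves, the only additional inputs being the stack-level statements that $\codim_{\cY}\cY'>0$ (Proposition \ref{prop:stack-codim-comp}), additivity of codimension (Proposition \ref{prop:stack-codim-add}), and the dimension formula (Proposition \ref{prop:dim-formula-stacks}). Your presentation-based translation could probably be salvaged by systematically invoking Lemma \ref{lem:irred-codim} in place of irreducibility of $Y$ at each codimension comparison, but that amounts to reproving those stack-level codimension results inside this proof rather than citing them, and in any case the reduction cannot begin with the claim that $Y$ is irreducible.
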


\begin{proof} In light of Proposition \ref{prop:rel-dim-stacks-agree},
the proof of Proposition \ref{prop:rel-dim-smoothing-2} goes through 
verbatim in the stack setting, using Proposition \ref{prop:stack-codim-comp}
for the first part, and Propositions \ref{prop:stack-codim-add} and
\ref{prop:dim-formula-stacks} for the second.
\end{proof}

We next observe that composition of morphisms still behaves well in the
stack setting.

\begin{cor}\label{cor:rel-dim-composition-stacks} 
Suppose that $f:\cX \to \cY$ has
universal relative dimension at least $m$, and $g:\cY \to \cZ$ has 
universal relative dimension at least $n$. Then $g \circ f$ has universal
relative dimension at least $m+n$.
\end{cor}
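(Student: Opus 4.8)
The plan is to reduce Corollary \ref{cor:rel-dim-composition-stacks} to the already-established scheme-theoretic composition result, Lemma \ref{lem:rel-dim-composition}, by choosing compatible smooth presentations of $f$, $g$, and $g\circ f$ and invoking Definition \ref{def:rel-dim-stacks} together with Proposition \ref{prop:rel-dim-stacks-defd}. First I would fix a smooth presentation $P:Z\to\cZ$ of $\cZ$. Next, pull back along $g$ and choose a smooth presentation $Q:Y\to\cY\times_\cZ Z$; composing, $Y\to\cY$ is a smooth presentation of $\cY$, and by construction $Y\to Z$ is the induced morphism presenting $g$. Finally, pull back along $f$ and choose a smooth presentation $P':X\to\cX\times_\cY Y$; then $X\to\cX$ is a smooth presentation of $\cX$, and — the key compatibility — $X\to Y$ presents $f$ while $X\to Z$ presents $g\circ f$, because $\cX\times_{\cZ}Z = (\cX\times_\cY Y)\times_Y(\text{a smooth cover of }\cY\times_\cZ Z)$ up to the relevant fiber-product identifications, so $X$ is simultaneously a smooth cover of $\cX\times_\cZ Z$.

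With these presentations in hand, let $h:Y\to Z$, $g':X\to Y$, and their composite $h\circ g' = :k:X\to Z$ be the induced morphisms of schemes, so $k$ presents $g\circ f$. By Proposition \ref{prop:rel-dim-stacks-defd} applied to $g$ with the presentation $(P,Q)$, the morphism $h$ has universal relative dimension at least $n+\reldim_y Q$ at each $y\in Y$; similarly, applied to $f$ with the presentation $(Q',P')$ where $Q':Y\to\cY$ is our composite presentation, $g'$ has universal relative dimension at least $m+\reldim_{x}P'$ at each $x\in X$. Now fix $x\in X$, set $y=g'(x)$, and apply Lemma \ref{lem:rel-dim-composition} to $g'$ and $h$ at $x$: the composite $k=h\circ g'$ has universal relative dimension at least $(m+\reldim_x P') + (n+\reldim_y Q)$ at $x$. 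To conclude via Definition \ref{def:rel-dim-stacks} for $g\circ f$ with the presentation $(P, P'')$ — where $P'':X\to\cX\times_\cZ Z$ is the map exhibiting $X$ as a smooth cover of $\cX\times_\cZ Z$ — I need $\reldim_x P'' = \reldim_x P' + \reldim_y Q$. This holds because $P''$ factors (locally, after the fiber-product identifications) as $P'$ followed by the base change of $Q$ along $\cX\times_\cY Y\to\cY\times_\cZ Z$, and relative dimensions of smooth morphisms add under composition. Hence $k$ has universal relative dimension at least $(m+n)+\reldim_x P''$ at every $x$, which is exactly the condition of Definition \ref{def:rel-dim-stacks} for $g\circ f$ to have universal relative dimension at least $m+n$.

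The main obstacle is bookkeeping with the fiber products: verifying that $X\to Z$ genuinely presents $g\circ f$ (i.e., that $X$ is a smooth cover of $\cX\times_\cZ Z$, not merely that it maps there), and pinning down the additivity $\reldim_x P'' = \reldim_x P' + \reldim_y Q$ of the relative dimensions of the presentation maps. Both are the same kind of routine manipulation already carried out in the proof of Proposition \ref{prop:rel-dim-stacks-defd}, using the identity $\cX\times_\cZ Z = \cX\times_\cY(\cY\times_\cZ Z)$ and compatibility of smooth covers with base change; I would phrase that part briefly and by analogy with \textit{loc. cit.} rather than spelling out every square. Everything else is a direct application of Lemma \ref{lem:rel-dim-composition} and Proposition \ref{prop:rel-dim-stacks-defd}, with no new ideas required.
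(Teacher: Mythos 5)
Your proof is correct and follows essentially the same route as the paper's: fix a chain of compatible smooth presentations, use the identity $\cX \times_{\cY} Y = (\cX \times_{\cZ} Z) \times_{\cY \times_{\cZ} Z} Y$ to see that $X$ also presents $g \circ f$, and conclude from Lemma \ref{lem:rel-dim-composition}. The paper declares the conclusion ``immediate'' from that lemma; you simply make explicit the $\reldim$ bookkeeping (via Proposition \ref{prop:rel-dim-stacks-defd} and additivity of relative dimension of smooth morphisms under composition and base change) that this requires.
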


\begin{proof} Let $P:Z \to \cZ$, $P':Y \to \cY \times_{\cZ} Z$ and
$P'':X \to \cX \times_{\cY} Y$ give smooth presentations of $g$ and $f$.
Then using that 
$$\cX \times_{\cY} Y = (\cX \times_{\cZ} Z) \times_{\cY \times_{\cZ} Z} Y,$$
we have a morphism $\cX \times_{\cY} Y \to \cX \times_{\cZ} Z$ which is
likewise a smooth cover, and thus yields a smooth presentation of 
$g \circ f$ as well.
$$\begin{tikzcd}
X \arrow{r}{P''} \arrow{dr} & \cX \times_{\cY} Y \arrow{d}\arrow{r} &
\cX \times_{\cZ} Z \arrow{d} \arrow{r} & \cX \arrow{d}{f} \\
{} & Y \arrow{r}{P'}\arrow{dr} & \cY \times_{\cZ} Z \arrow{r}\arrow{d} &
\cY \arrow{d}{g} \\
{} & {} & Z \arrow{r}{P} & \cZ
\end{tikzcd}$$
The desired
statement is then immediate from Lemma \ref{lem:rel-dim-composition}.
\end{proof}

We conclude by giving the obvious generalization of Corollary
\ref{cor:rel-dim-examples} to the stack setting.

\begin{cor}\label{cor:rel-dim-examples-stacks} Suppose that $f:\cX \to \cY$ 
is a closed immersion, and $g:\cY \to \cZ$ is smooth of relative dimension 
$n$, with $\cZ$ locally of finite type over a universally catenary scheme. 
If either $\cZ$ is regular and every component
of $\cX$ has codimension at most $c$ in $\cY$, or $\cX$ may be expressed 
locally as an intersection of determinantal conditions with expected 
codimensions adding up to $c$, then $g\circ f$ has universal relative 
dimension at least $n-c$.

Alternatively, suppose that $f:\cX \to \cY$ is a morphism of smooth 
algebraic stacks locally of finite type over a scheme $S$, with $S$ 
universally catenary, and let $m$ and $n$ be the relative dimensions of
$\cX$ and $\cY$ over $S$, respectively. Then $f$ has universal relative 
dimension at least $m-n$.
\end{cor}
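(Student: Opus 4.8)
The plan is to follow the proof of the scheme-theoretic Corollary \ref{cor:rel-dim-examples}, reducing each of the two assertions to its counterpart on schemes by passing to smooth presentations, and then reassembling via Corollaries \ref{cor:rel-dim-composition-stacks} and \ref{cor:rel-dim-examples}. Since Definition \ref{def:rel-dim-stacks} imposes its condition pointwise on the source, we may throughout pass to connected components, so that all the relevant relative dimensions --- of $\cX$ and $\cY$ over $S$, and of the smooth presentations --- are constant integers; and since every scheme arising below as a smooth presentation is locally of finite type over a universally catenary base, it is itself universally catenary, so that the scheme-level results apply.

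For the first assertion I would begin by proving the stack analogue of Example \ref{ex:closed-imm}: a closed immersion $f:\cX \hookrightarrow \cY$ as in the statement has universal relative dimension at least $-c$. Choose a smooth presentation $P:Y \to \cY$ with $Y$ a scheme. Because a closed substack of a scheme is a closed subscheme, $X := \cX \times_{\cY} Y$ is a scheme, so we may take $P' = \id_X$ as the presentation of $f$; the induced morphism of schemes is then literally the closed immersion $X \hookrightarrow Y$, and it suffices to check that this has universal relative dimension at least $-c$. In the first case, regularity of $\cZ$ and smoothness of $g$ force $\cY$, hence $Y$, to be regular, while Lemma \ref{lem:irred-codim} --- applied to the irreducible components, and combined with the behaviour of codimension for reducible stacks developed in \S\ref{sec:stack-codim} --- shows that every component of $X$ has codimension at most $c$ in $Y$; Example \ref{ex:closed-imm} then applies. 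In the second case, the local determinantal descriptions, and the reduced structure, are compatible with the flat base change along $P$, so the hypothesis passes to $X \subseteq Y$ and Example \ref{ex:closed-imm} again applies. Either way, Definition \ref{def:rel-dim-stacks} gives that $f$ has universal relative dimension at least $-c$. Next, the smooth morphism $g$ has universal relative dimension at least $n$: for any smooth presentation of $g$, the induced morphism of schemes is smooth of relative dimension $n$ plus the locally constant relative dimension of the presentation, so Example \ref{ex:flat} and Definition \ref{def:rel-dim-stacks} apply. Combining these two facts through Corollary \ref{cor:rel-dim-composition-stacks} shows that $g \circ f$ has universal relative dimension at least $n - c$.

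For the second assertion, choose smooth presentations $P:Y \to \cY$ and $P':X \to \cX \times_{\cY} Y$ with $X$ and $Y$ schemes, and write $s$ and $r$ for the (locally constant) relative dimensions of $P$ and $P'$. Then $Y$ is smooth over $S$ of relative dimension $n+s$ and $X$ is smooth over $S$ of relative dimension $m+s+r$ (each as a composition of smooth morphisms), while the induced morphism $g':X \to Y$ is a morphism of smooth $S$-schemes. By the second part of Corollary \ref{cor:rel-dim-examples}, $g'$ has universal relative dimension at least
$$(m+s+r)-(n+s) = (m-n)+r,$$
and since $r$ is the relative dimension of $P'$, Definition \ref{def:rel-dim-stacks} yields that $f$ has universal relative dimension at least $m-n$.

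The one point that requires genuine care --- beyond routine bookkeeping with Definition \ref{def:rel-dim-stacks} --- is the transfer, in the first assertion, of the regularity, codimension, and determinantal hypotheses from $\cX \subseteq \cY$ to the pullback $X \subseteq Y$ along a smooth presentation. This is exactly where Lemma \ref{lem:irred-codim} and the codimension theory of \S\ref{sec:stack-codim} are needed: a smooth cover can split an irreducible stack into a reducible scheme, so the condition ``every component has codimension at most $c$'' must be verified one component of $X$ at a time, against each component of $Y$ containing it, and only the componentwise form of Lemma \ref{lem:irred-codim} makes this go through.
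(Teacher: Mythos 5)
Your proposal is correct and follows essentially the same route as the paper: establish that $f$ has universal relative dimension at least $-c$ and $g$ at least $n$ by reducing to the scheme-level Examples \ref{ex:flat} and \ref{ex:closed-imm} via smooth presentations, combine with Corollary \ref{cor:rel-dim-composition-stacks}, and reduce the second assertion directly to the second part of Corollary \ref{cor:rel-dim-examples}. The paper leaves these reductions as ``immediate,'' whereas you supply the details (the choice $P'=\id_X$ for the closed immersion, the bookkeeping of presentation relative dimensions, and the use of Lemma \ref{lem:irred-codim} to transfer the codimension hypothesis), all of which check out.
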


Here, when we say that $\cX$ may be expressed locally as an intersection
of determinantal conditions with expected codimension adding up to $c$,
we mean simply that there exists some smooth presentation of $\cY$ 
for which this is the case.

\begin{proof} For the first statement, it follows immediately from the
scheme case that $f$ has universal relative dimension at least $-c$,
and $g$ has universal relative dimension at least $n$, so the desired
assertion follows from Corollary \ref{cor:rel-dim-composition-stacks}.

The second statement can be reduced immediately to the scheme statement
given in Corollary \ref{cor:rel-dim-examples}.
\end{proof}

\bibliographystyle{amsalpha}
\bibliography{gen}

\end{document}